\newcommand{\death}{{\beta}}
\begin{document}




\TITLE{Frank-Wolfe Recursions for the Emergency Response Problem on Measure Spaces}

\ARTICLEAUTHORS{%
\AUTHOR{Di Yu}
\AFF{Department of Statistics,
Purdue University}

\AUTHOR{Shane G. Henderson}
\AFF{School of Operations Research and Information Engineering, Cornell University}

\AUTHOR{Raghu Pasupathy}
\AFF{Department of Statistics,
Purdue University}
} 

\ABSTRACT{%
We consider an optimization problem over measures for emergency response to out-of-hospital cardiac arrest (OHCA), where the goal is to allocate volunteer resources across a spatial region to minimize the probability of death. The problem is infinite-dimensional and poses challenges for analysis and computation. We first establish structural properties, including convexity of the objective functional, compactness of the feasible set, and existence of optimal solutions. We also derive the influence function, which serves as the first-order variational object in our optimization framework. We then adapt and analyze a fully-corrective Frank-Wolfe (fc-FW) algorithm that operates directly on the infinite-dimensional problem without discretization or parametric approximation. We show a form of convergence even when subproblems are not solved to global optimality. Our full implementation of fc-FW demonstrates complex solution structure even in simple discrete cases, reveals nontrivial volunteer allocations in continuous cases, and scales to realistic urban scenarios using OHCA data from the city of Auckland, New Zealand. Finally, we show that when volunteer travel is modeled through the $L_1$ norm, the influence function is piecewise strictly concave, enabling fast computation via support reduction. The proposed framework and analysis extend naturally to a broad class of $P$-means problems.
}%



\maketitle

%

\section{INTRODUCTION.}\label{sec:intro} 
Out-of-hospital cardiac arrest (OHCA) is a leading cause of death worldwide, with survival outcomes highly dependent on rapid intervention. The probability of survival decreases by approximately 7–10\% for each minute without treatment~\cite{baekgaard2017effects}. Early response, particularly cardiopulmonary resuscitation (CPR) administered by trained individuals within the first few minutes, can significantly increase the chance of survival~\cite{nichol1999cumulative, sasson2010predictors,yan2020global}.

To reduce delays in emergency response, community first responder (CFR) systems have been specifically designed for OHCA patients and are now widely adopted in many countries. These systems supplement traditional emergency medical services (EMS) by dispatching trained volunteers via smartphone applications. In Europe, 19 of 29 countries have implemented such systems specifically for OHCA \cite{oving2019first}. Mobile apps like PulsePoint in the United States \cite{pulsepoint2020} and GoodSAM in New Zealand and the UK alert nearby volunteers based on GPS location \cite{goodsam2020}. Volunteers are often able to reach patients before ambulances, especially in congested urban areas where EMS response may be delayed. CFR systems thus depend crucially on volunteers, and in particular, on where volunteers are located over a geographical region of interest. 

CFR systems do not have direct control over where volunteers are located. Still, the problem of determining the optimal probability distribution for volunteer locations is important \cite{van2024modeling}. First, it is important because the optimal objective function value provides a lower bound on the probability of death for a given number of volunteers, which is important for determining the number of volunteers needed in a city to materially impact survival rates from OHCA. Second, the optimal solution can provide insight into where to focus recruitment efforts for additional volunteers.

However, the question of where to locate volunteers so as to minimize a performance criterion, such as the expected response time to OHCA events, or the probability of death resulting from the next arriving OHCA event, is challenging~\cite{van2024modeling,waalewijn2001survival}. This question is investigated in great detail in~\cite{van2024modeling}, where the authors develop a stochastic model for CFR response times in Auckland, New Zealand, and then examine which spatial distribution of volunteers maximizes the expected survival rate under randomly occurring OHCA events. To pave the way for implementable solutions, two key modeling assumptions are made. First, the volunteers are distributed over the region in question according to a Poisson point process. This assumption allows the computation of the response-time distribution which, when combined with the knowledge of patient survival rates as a function of response time, yields survival rates, and enables the development of an oracle for the objective function. Second, since the decision variable is the intensity measure corresponding to the volunteer Poisson point process, the resulting feasible region is the infinite-dimensional space of functions satisfying the classical postulates~\cite{1992res} of a spatial Poisson intensity measure. To circumvent the need to handle the resulting infinite-dimensional optimization problem, the authors in~\cite{van2024modeling} divide the geographical region into a finite number of \emph{area units}, and restrict attention only to intensity measures that are uniform over each area unit. The resulting finite-dimensional convex optimization problem is then solved by formulating the Karush-Kuhn-Tucker conditions, and then employing a greedy algorithm for solution.   

In this paper, our central interest is the \emph{infinite-dimensional OHCA emergency response problem}. In particular, we ask whether the OHCA emergency response problem considered in~\cite{van2024modeling} can be solved without a priori finite-dimensionalization, that is, can we devise an implementable algorithm that solves the infinite-dimensional problem as is? This question is challenging because it entails devising an algorithm capable of efficiently searching through the space of compactly supported Poisson intensity measures, while guaranteeing approach to an optimum in some rigorous sense. 

Solving the infinite-dimensional OHCA emergency response problem is of theoretical interest, but also motivated by several other considerations. First, a priori finite-dimensionalization as in~\cite{van2024modeling} is a computational convenience that yields solutions that are potentially sub-optimal with respect to the true infinite-dimensional problem. Algorithms that solve the infinite-dimensional problem shed light on the nature and extent of such sub-optimality. Second, and as we shall observe in further detail, the OHCA emergency response problem is a close cousin of a popular and large class of problems called P-means~\cite{2002molzuy,2000oka}, sometimes described as a randomized variant of the $k$-means clustering problem. Our investigations into the infinite-dimensional OHCA emergency reponse problem are thus relevant for efficiently solving the P-means problem. Third, and perhaps most important, our results shed light on the kinds of infinite-dimensional measure optimization problems that are computationally tractable. We shall see that we are able to efficiently solve many OHCA emergency response problems. It seems that such efficiency is afforded primarily by the ability to explicitly calculate the infinite-dimensional directional derivative (called the \emph{von Mises derivative}) of the objective function for embedding into the fully-corrective Frank-Wolfe algorithm. This embedding allows us to (weakly) approach an optimal measure through a simple addition and updating of ``particles.'' In a nutshell, the existence and good behavior of the von Mises derivative allows a type of ``dimension reduction,'' analogous to that observed by Kiefer~\cite{1974kie} in the context of experimental design.


\subsection{Problem Setup}
To formally state the problem of optimally allocating volunteers for OHCA response, let $\mathcal{S} \subset \mathbb{R}^2$ be a compact set and let $\eta \in \mathcal{P}(\mathcal{S})$ denote a probability distribution supported on $\mathcal{S}$ modeling the random location of an OHCA. Let $\beta: [0,\infty) \to [0,1]$ be a \emph{continuous, increasing, strictly concave} function modeling the \emph{probability of death} as a function of response time. Suppose that the availability of volunteers follows a spatial Poisson point process over $\mathcal{S}$ with mean measure $\mu$, where $\mu$ is a non-negative, finite Borel measure on \( \mathcal{S} \) with total mass \( \mu(\mathcal{S}) = b \). We refer to $\mu$ as the \emph{volunteer measure}.  (See~\cite{res1987} for a treatment of spatial Poisson processes.) Let \( R_{x,\mu} \in \mathbb{R}^+ \) denote the random response time to an incident occurring at location \( x \in \mathcal{S} \) under the volunteer measure \( \mu \). The probability of death for the next OHCA incident is then
\begin{equation}\label{obj}
J(\mu) = \int_{\mathcal{S}} \eta(\mbox{d}x)\int_0^{\infty} P(\death(R_{x,\mu}) > u) \, \mbox{d}u = \int_{\mathcal{S}} \eta(\mbox{d}x)\int_0^{1} P(\death(R_{x,\mu}) > u) \, \mbox{d}u. \end{equation}
Assuming volunteers travel at constant speed \( v \), the probability that \( R_{x,\mu} > t \) equals the probability that no volunteer is within the ball \( B(x,vt) := \{ y \in \mathbb{R}^2 : \|y - x\| \leq vt \} \). Without loss of generality, we set \( v = 1 \), and let \( \death^{-1}(u) := \inf\{ t : \death(t) \geq u \} \) denote the inverse of the death probability function. Then, using the void probability formula for a Poisson point process with intensity measure \( \mu \),
\begin{align}\label{objexp}
J(\mu)
&= \int_{\mathcal{S}} \eta(\mathrm{d}x) \int_0^1 P(R_{x,\mu} > \death^{-1}(u)) \, \mathrm{d}u \notag \\
&= \int_{\mathcal{S}} \eta(\mathrm{d}x) \int_0^1 \exp\left\{-\mu\left(B(x, \death^{-1}(u)) \cap \mathcal{S}\right)\right\} \, \mathrm{d}u \notag \\
&= \int_{\mathcal{S}} \eta(\mathrm{d}x) \int_0^{\infty} \exp\left\{-\mu\left(B(x, t) \cap \mathcal{S}\right)\right\} \, \mathrm{d}\death(t).
\end{align} We thus seek the optimal volunteer measure $\mu$ that minimizes the  probability of death, represented by the functional \( J(\mu) \), leading to the optimization problem
\begin{alignat}{2}
\begin{cases}
\min\limits_{\mu} & \quad J(\mu) \\
\text{subject to} & \quad \mu \in \mathcal{M}^+(\mathbb{R}^2, b), \tag{$P_0$}
\end{cases}
\label{mainopt}
\end{alignat}
where \( \mathcal{M}^+(\mathbb{R}^2, b) \) denotes the set of non-negative Borel measures supported on \( \mathbb{R}^2 \) with total mass \( \mu(\mathbb{R}^2) = b \), and \( J : \mathcal{M}^+(\mathbb{R}^2, b) \to \mathbb{R} \) is the objective functional defined in~\eqref{objexp}. 

\subsection{Summary and Contribution}

\begin{enumerate}[label=(\alph*)]
    \item We study the structural properties of the (infinite-dimensional) OHCA emergency response problem and propose an equivalent restricted formulation~\eqref{restrictedopt} over the compact convex hull of the incident region, denoted $\mbox{cvx}(\mathcal{S}) \subset \mathbb{R}^2$ (Theorem~\ref{thm:restrict}). We show that the objective functional $J$ is convex, the feasible set $\mathcal{M}^+(\mbox{cvx}(\mathcal{S}), b)$ is compact under the weak$^*$ topology, and optimal solutions exist (Theorems~\ref{thm:feasibleregion}--\ref{thm:existence}). We also derive a closed-form expression for the influence function and establish the existence of the von Mises derivative (Theorem~\ref{thm:vonMises}), defined in Definition~\ref{def:vonmises}, which serves as the first-order variational object in our optimization framework.
    \item We develop theoretical guarantees for the fully-corrective Frank-Wolfe (fc-FW) method in Algorithm~\ref{alg:fcFW}, applied directly to the infinite-dimensional optimization problem over measures. Unlike classical Frank-Wolfe variants that require solving the subproblem of minimizing the influence function to global optimality, our analysis in Section~\ref{sec:fw_methods} establishes a sufficient decrease property (Lemma~\ref{lemma:decrease}) and a complexity bound (Theorem~\ref{thm:convfcFW}) under a mild condition: the influence function value at the selected point is non-positive. This is significant because the influence function is generally nonconvex, making global minimization intractable. We show that fc-FW remains provably convergent (in a certain sense) even when the subproblem is solved only to (near) local optimality. The influence function value at the selected point serves as a practical proxy for the optimality gap and decays at a rate of $\mathcal{O}(1/\sqrt{k})$ (Theorem~\ref{thm:convfcFW}) in the iteration count, $k$. If the subproblem is solved to global optimality, then global convergence is assured, and the rate improves to $\mathcal{O}(1/k)$, matching the standard Frank-Wolfe rate. These properties enable scalable implementation of fc-FW in practical settings.
    \item We provide a full implementation of the fc-FW algorithm on the OHCA emergency response problem and evaluate it across a range of scenarios. In stylized discrete cases with only three or four fixed incident locations, we find that the optimal volunteer measures exhibit complex structure, and that fc-FW computes solutions satisfying a global optimality condition. In continuous settings with uniform and mixture-of-uniform incident distributions, the results show that the optimal volunteer distribution does not simply mirror the incident distribution. We also apply fc-FW to a large-scale, realistic setting using OHCA data from the city of Auckland, New Zealand, demonstrating its scalability and effectiveness at practical scales in complex spatial domains. Furthermore, since the objective function in $P$-means~\cite{2002molzuy,2000oka} shares the same structural form, the proposed method and analysis extend naturally to a broad class of $P$-means problems.
    \item We explore the impact of norm choice for modeling volunteer travel on the objective function. In particular, we compare the standard Euclidean ($L_2$) norm with the Manhattan ($L_1$) norm, which better reflects travel patterns in grid-like urban environments. Under the $L_1$ norm, we show that the influence function is piecewise strictly concave for discrete incident distributions (Lemma~\ref{lemma:piecewiseConcave}), which implies that the support of the optimal measure lies within a structured, finite subset of the domain (Theorem~\ref{thm:supportSubset}). These results offer both theoretical and practical insights for urban deployments and, potentially, enable faster computation.
\end{enumerate}





\subsection{Paper Organization} The rest of this paper is organized as follows. Section~\ref{sec:Literature} reviews related work on optimization over measures. Section~\ref{sec:preliminaries} presents definitions and the Frank-Wolfe framework for measure optimization. Section~\ref{sec:regularity} establishes regularity properties of the problem. Section~\ref{sec:specialCases} provides analytical insights from special cases with discrete incident locations. Section~\ref{sec:implementation} describes the full implementation of the proposed fc-FW algorithm and presents numerical results on both synthetic and real-world datasets. Section~\ref{sec:normMatter} explores the impact of norm choice on the objective. Section~\ref{sec:concluding} concludes and outlines directions for future research.

\section{Literature on Optimization over Measures.}\label{sec:Literature}
Optimization over the space of measures arises in nonparametric statistics~\cite{1960kie,yan2024learning}, signal processing~\cite{eftekhari2019sparse,2017boygeorec}, and machine learning~\cite{2019chublagly,2018mei}. In our setting, the problem~\eqref{restrictedopt} belongs to this infinite-dimensional class. A traditional approach is to discretize the domain—corresponding to the city $\mathcal{S}$ in our emergency response setting—into a finite grid and optimize over weights assigned to each grid point. While simple to implement, this ``grid and optimize'' strategy has significant drawbacks~\cite{2017boygeorec,yu2024det}. It ignores structural properties of the objective \( J \) (such as smoothness or convexity) and incurs rapidly growing computational costs as the grid becomes finer. These limitations hinder both scalability and solution accuracy.

Another widely used strategy for optimizing over measures is \emph{particle methods}~\cite{nitanda2017stoch,chizat2018global,2022bchi,yan2024learning}, which approximate a general measure by an empirical measure constructed as the finite weighted sum of Dirac masses. This reduces the infinite-dimensional optimization problem to a finite-dimensional one. Unlike gridding, particle methods update both the support (particle locations) and the associated weights using various approaches, e.g., Wasserstein-Fisher-Rao gradient flows~\cite{yan2024learning}. 

While particle methods are powerful, they are not necessarily well suited for the emergency response setting considered here. Typically, particle methods fix the number of particles (support points) in advance. However, in the OHCA volunteer allocation problem considered in this paper, the optimal measure can exhibit a complex, nontrivial structure and may not admit a finite-support representation even in simple cases. This makes it difficult to determine in advance how many support points are needed for a faithful approximation. One could certainly successively increase the number of points in a series of stages, but we prefer an alternative approach.

A third strategy for optimization over measures is to use the \emph{Frank-Wolfe} (FW) (also known as conditional gradient) iteration~\cite{2017boygeorec,denoyelle2019sliding,kent2021frankwolfe,yu2024det}. Unlike the previous methods, FW operates directly in infinite-dimensional spaces by iteratively accumulating point masses at strategic locations, without requiring a priori discretization or fixing the number of support points. Recent work~\cite{yu2024det} develops a deterministic Frank-Wolfe (dFW) recursion tailored to probability spaces, using the \emph{influence function} (see Definition~\ref{def:vonmises}) as the first-order variational object. At each iteration, dFW updates the current measure by forming a convex combination with a Dirac measure located at the minimizer of the influence function.

An approach such as dFW is especially attractive for the OHCA emergency response problem because it allows the support of the solution to grow adaptively and captures complex, nontrivial structures in the optimal volunteer measure without requiring assumptions of sparsity or finite support. However, solving the subproblem of finding a global minimizer of the influence function can be difficult in practice, due to the nonconvexity of the influence function. To address this, we establish new convergence guarantees for the fully-corrective variant (fc-FW) (Algorithm~\ref{alg:fcFW}) that do not require solving the subproblem to global optimality (see Section~\ref{sec:fw_methods}). This flexibility makes fc-FW especially well suited for solving the OHCA volunteer allocation problem.

\section{PRELIMINARIES.}\label{sec:preliminaries}
Section~\ref{sec:def} defines key concepts from measure theory, including the space of finite Borel measures, support, and the influence function as the first-order object in measure optimization. Section~\ref{sec:fw_methods} introduces Frank-Wolfe (FW) methods adapted to the infinite-dimensional setting. We review convergence results for the deterministic FW (dFW) algorithm from~\cite{yu2024det} and establish new convergence guarantees for its fully-corrective variant (fc-FW), which serves as the computational framework for our OHCA emergency response problem.

\subsection{Definitions.}\label{sec:def} Suppose $\mathcal{X}$ is a compact convex subset of $\mathbb{R}^2$, and let $(\mathcal{X},\mathcal{B}(\mathcal{X}))$ denote a measurable space, where $\mathcal{B}(\mathcal{X})$ represents the Borel $\sigma$-algebra on $\mathcal{X}$. 
 Let $\mathcal{M}^+(\mathcal{X})$ denote the \emph{space of $\sigma$-finite measures} on this measurable space.
\begin{definition}[Subset of Measure Spaces] In the context of the OHCA emergency response problem, we restrict our focus to measures $\mu$ with a finite total measure $b$, that is, to the set $\mathcal{M}^+(\mathcal{X},b) := \{\mu \in \mathcal{M}^+(\mathcal{X})\, : \, \mu(\mathcal{X})=b\}$.
It is evident that the set $\mathcal{M}^+(\mathcal{X},b)$ is convex, and that  when $b=1$, $\mathcal{M}^+(\mathcal{X},1)$ is the set of probability measures on $(\mathcal{X},\mathcal{B}(\mathcal{X}))$.
\end{definition}
\begin{definition}[Support] The \emph{support} of a measure $\mu \in \mathcal{M}^+(\mathcal{X})$ is \begin{equation}\label{suppdef} \mathrm{supp}(\mu) := \{ x \in \mathcal{X} : \mu(B_\epsilon(x)) > 0\ \text{for all } \epsilon > 0 \},
\end{equation} where \( B_\epsilon(x) \) denotes the open ball centered at \( x \) with radius \( \epsilon \). \end{definition}
\begin{definition}[Influence function and von Mises Derivative]\label{def:vonmises} Suppose $J: \mathcal{M}^+(\mathcal{X},b) \to \mathbb{R}$ is a real-valued function.

The \emph{influence function} $h_{\mu}: \mathcal{X} \to \mathbb{R}$ of $J$ at $\mu \in \mathcal{M}^+(\mathcal{X},b)$ is defined as \begin{equation}\label{influence} h_{\mu}(x) = \lim_{t \to 0^+}\frac{1}{t} \bigg\{ J(\mu + t(b\,\delta_x - \mu)) - J(\mu) \bigg\}, \end{equation} where $\delta_x := \mathbb{I}_{A}(x), A \subset \mathcal{X}$ is the Dirac measure (or atomic mass) concentrated at $x \in \mathcal{X}$~\cite{1983fer,2011fer}. The influence function should be loosely understood as the rate of change in the objective $J$ at $\mu$, due to a perturbation of $\mu$ by a point mass $b\,\delta_x$.

The \emph{von Mises derivative} is defined as
\begin{equation}\label{vonmises}
    J'_{\mu}(\nu-\mu) := \lim_{t \to 0^+}\, \frac{1}{t} \bigg\{J(\mu + t(\nu - \mu)) - J(\mu)\bigg\}, \quad \mu,\,\nu\in \mathcal{M}^+(\mathcal{X},b),
\end{equation}
provided $J'_{\mu}(\cdot)$ is \emph{linear} in its argument, that is, there exists a function $\phi_{\mu}: \mathcal{X} \to \mathbb{R}$ such that
\begin{align} \label{vonlinear}
J'_{\mu}(\nu-\mu) &= \int \phi_{\mu}(x)\, \mathrm{d}(\nu - \mu)(x).
\end{align}
From \eqref{influence} and \eqref{vonmises}, we see that $J'_{\mu}(b\delta_x - \mu) = h_\mu$; and if~\eqref{vonlinear} holds, then $J'_{\mu}(b\delta_x - \mu) = b\phi_{\mu}(x) + \mbox{constant} = h_{\mu}(x)$. As implied by Theorem~\ref{thm:vonMises} in Section~\ref{sec:regularity}, the influence function and von Mises derivative exist for the objective function $J$ in~\eqref{objexp}.  \end{definition}

\begin{definition}[{\em L}-Smooth] The functional $J: \mathcal{M}^+(\mathcal{X},b) \to \mathbb{R}$ is $L$-smooth if it satisfies \begin{align}\label{def:Lsmooth}
    \sup_{x \in \mathcal{X}} |h_{\mu_1}(x)-h_{\mu_2}(x) | \, & \leq \, L \|\mu_1 - \mu_2 \|, \quad \forall \mu_1 ,\, \mu_2 \in \mathcal{M}^+(\mathcal{X},b),
\end{align} where $h_{\mu_1}$ and $h_{\mu_2}$ are corresponding influence functions, and the total variation distance between $\mu_1$ and $\mu_2$ in $\mathcal{M}^+(\mathcal{X},b)$ is defined as
\begin{align}\label{totvar}
    \|\mu_1 - \mu_2 \| &:=\sup_{A\in \mathcal{B}(\mathcal{X})} |\mu_1(A)-\mu_2(A) |,
\end{align} where $\mathcal{B}(\mathcal{X})$ is the Borel $\sigma$-algebra.  As written, the symbol $\| \cdot \|$ appearing in~\eqref{totvar} does not refer to a norm but our use of such notation is for convenience and should cause no confusion.
\end{definition} 

\begin{definition}[Projection in $\mathbb{R}^2$]\label{def:projection} Let $\mathcal{X} \subseteq \mathbb{R}^2$ be a closed and convex set. The \emph{projection operator} $\Pi_{\mathcal{X}}: \mathbb{R}^2 \to \mathcal{X}$ is defined as
\begin{equation}
    \Pi_{\mathcal{X}}(x) := \argmin_{z \in \mathcal{X}} \|x - z\|,
\end{equation}
which maps each point $x \in \mathbb{R}^2$ to its closest point in $\mathcal{X}$ with respect to the Euclidean norm.
For any subset $A \subseteq \mathcal{X}$, the \emph{preimage} of $A$ under $\Pi_{\mathcal{X}}$ is defined as
\begin{equation}
    \Pi^{-1}_{\mathcal{X}}(A) := \{ x \in \mathbb{R}^2 \mid \Pi_{\mathcal{X}}(x) \in A \}.
\end{equation}
This set consists of all points in $\mathbb{R}^2$ whose projection under $\Pi_{\mathcal{X}}$ lies within $A$.
\end{definition}

\subsection{Frank-Wolfe Methods for Measure Optimization}\label{sec:fw_methods}

In this section, we introduce Frank-Wolfe methods for optimizing measures over $\mathcal{M}^+(\mathcal{X},b)$. To motivate our approach, recall the Frank-Wolfe recursion~\cite{1978dunhar} (also known as the \emph{conditional gradient} method~\cite{2015bub}) in finite-dimensional Euclidean spaces. When minimizing a smooth function \( f: \mathbb{R}^d \to \mathbb{R} \) over a compact convex set \( Z \subset \mathbb{R}^d \), the Frank-Wolfe recursion is given by
\begin{equation} \label{eq:FW}
    y_{k+1} = (1 - \eta_k)y_k + \eta_k s_k, \quad s_k := \argmin_{s \in Z} \nabla f(y_k)^\top s,
\end{equation}
where \( \eta_k \in (0,1] \) is a step size. This method is ``projection-free'' and primal in that the iterates \( \{y_k\} \) are always feasible; whether the method is efficient depends on how easily the subproblem can be solved.

To extend Frank-Wolfe to $\mathcal{M}^+(\mathcal{X},b)$, consider a smooth functional \(J : \mathcal{M}^+(\mathcal{X},b) \to \mathbb{R}\)  and write the first-order approximation using the von Mises derivative \( J'_\mu \):
\[
J(u) \approx J(\mu) + J'_\mu(u - \mu), \quad \text{for } u \in \mathcal{M}^+(\mathcal{X},b).
\]
This suggests the following recursion for measures:
\[
\mu_{k+1} = (1 - \eta_k)\mu_k + \eta_k u_k, \quad u_k := \argmin_{u \in \mathcal{M}^+(\mathcal{X},b)} J'_{\mu_k}(u - \mu_k).
\]
As shown in Lemma~5~\cite{yu2024det}, the minimizer of \(J'_\mu(u - \mu)\) is a scaled Dirac measure $b\delta_{x^*(\mu_k)}$, where \(x^*(\mu)\) minimizes the influence function \(h_\mu(x)\) over $\mathcal{X}$.
Thus, the recursion simplifies to:
\begin{equation}\label{dFW}
\mu_{k+1} = (1 - \eta_k)\mu_k + \eta_k b\delta_{x^*(\mu_k)}, \quad x^*(\mu_k) \in \argmin_{x \in \mathcal{X}} h_{\mu_k}(x). \tag{dFW}
\end{equation}
The approach implied by~\eqref{dFW} thus solves an infinite-dimensional optimization problem by iteratively accumulating point masses at optimally chosen locations in \(\mathcal{X}\). Unlike traditional finite-dimensionalization techniques such as gridding, the~\eqref{dFW} recursion dynamically updates its support set without requiring a predefined discretization of the search space. Under standard assumptions of convexity and \(L\)-smoothness of \(J\), the~\eqref{dFW} method guarantees an \(O(k^{-1})\) convergence rate in objective value, as established in Theorem~\ref{thm:dFWcomp}. This result follows from the complexity analysis in~\cite{yu2024det}. 

\begin{theorem}[Complexity~\cite{yu2024det}]\label{thm:dFWcomp}
Suppose $J$ is convex and $L$-smooth, and the step-sizes $\{\eta_k, k \geq 0\}$ in~\eqref{dFW} are chosen as $\eta_k = \frac{2}{k+2}.$ Then the dFW iterates $\{\mu_k, k \geq 1\}$ satisfy $J(\mu_k) - J^* \leq \frac{2LR^2}{k+2}, k \geq 1$ where $J^* := \inf\left\{J(\mu) : \mu \in \mathcal{M}^+(\mathcal{X},b)\right\}$ and \(R = \sup\{\|\mu_1-\mu_2\| : \mu_1, \mu_2 \in \mathcal{M}^+(\mathcal{X},b)\}\).
\end{theorem}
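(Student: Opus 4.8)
The plan is to run the classical Frank--Wolfe argument, with the von Mises derivative~\eqref{vonmises} in the role of the gradient and the total-variation diameter $R$ in the role of the domain diameter. Throughout, write $\nu_k := b\,\delta_{x^*(\mu_k)}$, so that~\eqref{dFW} reads $\mu_{k+1} = \mu_k + \eta_k(\nu_k-\mu_k)$, and fix an arbitrary comparator $\mu^* \in \mathcal{M}^+(\mathcal{X},b)$; at the end one takes the infimum of $J(\mu^*)$ over all such $\mu^*$ to recover $J^*$. This sidesteps any appeal to existence of a minimizer.

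\emph{Step 1 (curvature/descent inequality).} Apply the fundamental theorem of calculus to $t \mapsto J(\mu_t)$ on $[0,\eta_k]$, where $\mu_t := \mu_k + t(\nu_k-\mu_k)$ and the derivative is $J'_{\mu_t}(\nu_k-\mu_k)$, to get
\[
J(\mu_{k+1}) - J(\mu_k) = \eta_k\, J'_{\mu_k}(\nu_k-\mu_k) + \int_0^{\eta_k}\!\big(J'_{\mu_t}(\nu_k-\mu_k) - J'_{\mu_k}(\nu_k-\mu_k)\big)\,\mathrm{d}t .
\]
The leading term is $\eta_k\, h_{\mu_k}(x^*(\mu_k))$ by the identity $J'_\mu(b\delta_x-\mu)=h_\mu(x)$ in Definition~\ref{def:vonmises}. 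For the integral term, use the linear representation~\eqref{vonlinear} to write the bracket as $\int(\phi_{\mu_t}-\phi_{\mu_k})\,\mathrm{d}(\nu_k-\mu_k)$; since $\nu_k-\mu_k$ has zero total mass, the additive constant relating $\phi_\mu$ to $h_\mu$ cancels, so this equals $\tfrac1b\int(h_{\mu_t}-h_{\mu_k})\,\mathrm{d}(\nu_k-\mu_k)$, which by $L$-smoothness~\eqref{def:Lsmooth} together with $\|\mu_t-\mu_k\| = t\|\nu_k-\mu_k\| \le tR$ is $O(LtR^2)$ in absolute value. Integrating over $[0,\eta_k]$ yields the descent inequality $J(\mu_{k+1}) \le J(\mu_k) + \eta_k\, h_{\mu_k}(x^*(\mu_k)) + \tfrac12 C\eta_k^2$ for a curvature constant $C \le LR^2$.

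\emph{Step 2 (linear term $\le$ negative gap).} Since $x^*(\mu_k)$ minimizes $h_{\mu_k}$ over $\mathcal{X}$ and $\mu^*/b$ is a probability measure, $h_{\mu_k}(x^*(\mu_k)) \le \tfrac1b\int h_{\mu_k}\,\mathrm{d}\mu^*$. Using $\int h_{\mu_k}\,\mathrm{d}\mu_k = J'_{\mu_k}(0) = 0$ (linearity of $J'_{\mu_k}$, as $\int(b\delta_x-\mu_k)\,\mathrm{d}\mu_k(x)=0$) and then the first-order convexity inequality for $J$,
\[
h_{\mu_k}(x^*(\mu_k)) \le \tfrac1b\!\int h_{\mu_k}\,\mathrm{d}(\mu^*-\mu_k) = J'_{\mu_k}(\mu^*-\mu_k) \le J(\mu^*) - J(\mu_k).
\]
\emph{Step 3 (recursion and induction).} Combining Steps~1--2, with $\delta_k := J(\mu_k) - J(\mu^*)$, gives $\delta_{k+1} \le (1-\eta_k)\delta_k + \tfrac12 C\eta_k^2$. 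With $\eta_k = 2/(k+2)$ (so $\eta_0=1$), induction shows $\delta_k \le 2C/(k+2) \le 2LR^2/(k+2)$ for $k\ge1$: the base case is $\delta_1 \le \tfrac12 C \le \tfrac23 C$, and if $\delta_k \le 2C/(k+2)$ then
\[
\delta_{k+1} \le \frac{k}{k+2}\cdot\frac{2C}{k+2} + \frac{2C}{(k+2)^2} = \frac{2C(k+1)}{(k+2)^2} \le \frac{2C}{k+3},
\]
using $(k+1)(k+3)\le(k+2)^2$ (and the same chain covers $\delta_k<0$, since then the first term is negative). Taking $\inf$ over $\mu^*$ gives the claim.

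The one genuinely delicate point is Step~1: converting the $L$-smoothness bound, stated for the influence function $h_\mu$ in total-variation distance, into a curvature bound on $J$ along the segment $\mu_k\to\nu_k$, and checking that the curvature constant is controlled by $LR^2$ with $R$ the total-variation diameter. The zero-total-mass property of $\nu_k-\mu_k$ — which annihilates the constants relating $h_\mu$ and $\phi_\mu$, and also gives $\int h_{\mu_k}\,\mathrm{d}\mu_k=0$ in Step~2 — is what makes this bookkeeping clean. Everything else is the standard Frank--Wolfe recursion; this is precisely the complexity analysis carried out in~\cite{yu2024det}.
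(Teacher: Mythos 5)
The paper never proves Theorem~\ref{thm:dFWcomp} itself---it defers to the complexity analysis of~\cite{yu2024det}, keeping only the supporting ingredients ($L$-smoothness and the smooth functional inequality~\eqref{descentlemmagen}) in the appendix---and your argument is exactly that standard Frank--Wolfe analysis, so structurally it matches the intended proof. Steps~2 and~3 are correct as written: the zero-total-mass cancellation giving $\int h_{\mu_k}\,\mathrm{d}\mu_k=0$, the comparator bound via first-order convexity, and the $\eta_k=2/(k+2)$ induction (including the arbitrary-comparator trick that avoids needing a minimizer) are all sound.

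The one quantitative weak point is the curvature constant in Step~1: the claim $C\le LR^2$ does not follow from your own estimate. Bounding $\tfrac1b\int(h_{\mu_t}-h_{\mu_k})\,\mathrm{d}(\nu_k-\mu_k)$ by $\tfrac1b\sup_x|h_{\mu_t}(x)-h_{\mu_k}(x)|\cdot|\nu_k-\mu_k|(\mathcal{X})$, and noting that for a zero-mass signed measure the total-variation mass equals $2\sup_A|(\nu_k-\mu_k)(A)|=2\|\nu_k-\mu_k\|\le 2R$ in the paper's metric~\eqref{totvar}, your integrand is bounded by $\tfrac{2LR^2}{b}\,t$, which after integration gives $C\le \tfrac{2LR^2}{b}$ and a final bound $\tfrac{4LR^2}{b(k+2)}$; for $b=1$ (the probability-measure setting of~\cite{yu2024det}) this is a factor of two weaker than the stated $\tfrac{2LR^2}{k+2}$. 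To recover the theorem's constant, replace your FTC-plus-sup-norm estimate by the quadratic upper bound~\eqref{descentlemmagen}, $J(\tilde\mu)-J(\mu)-J'_{\mu}(\tilde\mu-\mu)\le \tfrac{L}{2b}\|\tilde\mu-\mu\|^2$, applied with $\mu=\mu_k$, $\tilde\mu=\mu_{k+1}$: this yields $C\le \tfrac{LR^2}{b}$ and hence $J(\mu_k)-J^*\le \tfrac{2LR^2}{b(k+2)}\le \tfrac{2LR^2}{k+2}$ whenever $b\ge 1$. (A minor regularity remark: your FTC step also tacitly assumes $t\mapsto J(\mu_k+t(\nu_k-\mu_k))$ is absolutely continuous with derivative $J'_{\mu_t}(\nu_k-\mu_k)$; citing or proving the quadratic bound directly sidesteps this as well.) Everything else in the write-up is fine.
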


\begin{algorithm}
\caption{Fully-corrective Frank Wolfe (fc-FW) on $\mathcal{M}^+(\mathcal{X},b)$}\label{alg:fcFW}
\begin{tabular} {l}
  
  \textbf{Input:} Initial measure $\mu_0\in \mathcal{M}^+(\mathcal{X},b)$ \\ 
  \textbf{Output:} Iterates $\mu_1, \dots, \mu_{K}\in \mathcal{M}^+(\mathcal{X},b)$\\
  1 $A_0 \leftarrow \emptyset$ or $\{\mu_0\}$\\
  2 \textbf{for} $k=0,1,\dots, K$ \textbf{do}\\
  3 ~~$x^*(\mu_k)\leftarrow \argmin_{x\in \mathcal{X}} h_{\mu_k}(x)$\\
  4 ~~$A_{k+1}\leftarrow A_k\cup\{b\delta_{x^*(\mu_k)}\}$\\
  5 ~~$\mu_{k+1}\leftarrow \argmin_{\mu\in \mathrm{conv}(A_{k+1})} J(\mu)$\\
  6 \textbf{end for}\\
   
 \end{tabular}
\end{algorithm}

A natural extension of Frank-Wolfe is the fully-corrective version, which often yields improved practical performance~\cite{2013brepik,2017boygeorec}. A fully corrective modification of~\eqref{dFW} is presented in Algorithm~\ref{alg:fcFW}. Unlike~\eqref{dFW}, where the update is a convex combination of the previous iterate \(\mu_k\) and the scaled Dirac measure \( b \delta_{x^*(\mu_k)} \), fully-corrective Frank Wolfe (fc-FW) maintains and optimizes over an expanding set of scaled Dirac measures. Specifically, fc-FW starts with an initial measure \(\mu_0\) and an empty set \(A_0\). At each iteration \(k\), a new scaled Dirac measure \( b \delta_{x^*(\mu_k)} \) is added to \(A_k\), forming the updated set
\[
A_{k+1} = \{ b \delta_{x^*(\mu_0)}, \dots, b \delta_{x^*(\mu_k)} \}.
\]
The fc-FW algorithm then minimizes \( J(\mu) \) over the convex hull of \( A_{k+1} \), incorporating all selected atoms, as shown in Step 5. This fully corrective step is equivalent to solving:

\begin{equation}\label{eq:fcEquivalent}
\min_{p_0, \dots, p_k\in\mathbb{R}} J\left( \sum_{i=0}^{k} p_i b\delta_{x^*(\mu_i)} \right) 
\quad \text{s.t.} \quad \sum_{i=0}^{k} p_i = 1, \quad p_i \geq 0.
\end{equation}
Since \( J \) is convex in $\mu$, \eqref{eq:fcEquivalent} is a (finite-dimensional) convex optimization problem in $p = (p_0,p_1,\ldots,p_k)$.

As an extension of dFW, fc-FW also achieves an \( O(k^{-1}) \) convergence rate under the same assumptions as Theorem~\ref{thm:dFWcomp}, with a similar proof. However, fc-FW introduces additional theoretical properties that enhance its practical performance. In particular, we establish a sufficient decrease property under a weaker assumption, requiring only \( h_{\mu_k}(x^*(\mu_k)) \leq 0 \) rather than the global optimality of \( x^*(\mu_k) \).

\begin{lemma}[Sufficient Decrease]\label{lemma:decrease}
Suppose \(J\) is \(L\)-smooth and \(h_{\mu_k}(x^*(\mu_k)) \leq 0\). Then,
\begin{equation}\label{eq:decrease}
    J(\mu_{k+1}) - J(\mu_k) \leq - \min \left\{ \frac{b}{2LR^2}h_{\mu_k}(x^*(\mu_k))^2, \, \frac{LR^2}{2b} \right\}\leq 0, \quad k \geq 0,
\end{equation}
where \(R = \sup\{\|\mu_1-\mu_2\| : \mu_1, \mu_2 \in \mathcal{M}^+(\mathcal{X},b)\}.\)
\end{lemma}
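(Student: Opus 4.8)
The plan is to mimic the classical Frank-Wolfe descent lemma, but adapted to the measure setting where the ``step direction'' is $b\delta_{x^*(\mu_k)} - \mu_k$ and the step size $\eta \in [0,1]$ is a free parameter we get to optimize. First I would note that since $\mu_{k+1}$ is the exact minimizer of $J$ over $\mathrm{conv}(A_{k+1})$ (Step 5), and since for any $\eta \in [0,1]$ the measure $\mu_k^{(\eta)} := (1-\eta)\mu_k + \eta\, b\delta_{x^*(\mu_k)}$ lies in $\mathrm{conv}(A_{k+1})$ (because $\mu_k \in \mathrm{conv}(A_k) \subseteq \mathrm{conv}(A_{k+1})$ and $b\delta_{x^*(\mu_k)} \in A_{k+1}$), we have the sandwich
\[
J(\mu_{k+1}) - J(\mu_k) \;\leq\; J\big(\mu_k^{(\eta)}\big) - J(\mu_k) \qquad \text{for every } \eta \in [0,1].
\]
So it suffices to bound the right-hand side for a well-chosen $\eta$ and then minimize over $\eta$.

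Next I would establish the basic quadratic upper bound implied by $L$-smoothness. Writing $\nu := b\delta_{x^*(\mu_k)}$ and using $\|\mu_k^{(\eta)} - \mu_k\| = \eta\|\nu - \mu_k\| \leq \eta R$, the $L$-smoothness property~\eqref{def:Lsmooth} gives, via the standard descent-lemma argument (integrating the influence function along the segment $t \mapsto \mu_k + t\eta(\nu - \mu_k)$),
\[
J\big(\mu_k^{(\eta)}\big) - J(\mu_k) \;\leq\; \eta\, h_{\mu_k}(x^*(\mu_k)) \;+\; \frac{L}{2}\eta^2 R^2,
\]
where I have used that $J'_{\mu_k}(\nu - \mu_k) = h_{\mu_k}(x^*(\mu_k))$ by the identity noted after Definition~\ref{def:vonmises} (recall $\nu - \mu_k = b\delta_{x^*(\mu_k)} - \mu_k$). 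This step is where the slightly delicate point lies: one must justify the first-order/second-order expansion from the $L$-smooth hypothesis, i.e. that $\sup_x|h_{\mu_1}(x) - h_{\mu_2}(x)| \le L\|\mu_1-\mu_2\|$ together with linearity of the von Mises derivative yields the quadratic bound — this is the measure-space analogue of ``$L$-Lipschitz gradient implies descent lemma'' and is presumably already used in the $O(k^{-1})$ analysis cited from~\cite{yu2024det}, so I would invoke it in the same form.

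Finally I would optimize the quadratic bound $q(\eta) := \eta\, h_{\mu_k}(x^*(\mu_k)) + \tfrac{L}{2}\eta^2 R^2$ over $\eta \in [0,1]$. Let $g := h_{\mu_k}(x^*(\mu_k)) \le 0$ by hypothesis. The unconstrained minimizer of $q$ is $\eta^\star = -g/(LR^2) \ge 0$. If $\eta^\star \le 1$, i.e. $-g \le LR^2$, plugging in gives $q(\eta^\star) = -g^2/(2LR^2)$. If $\eta^\star > 1$, i.e. $-g > LR^2$, then $q$ is decreasing on $[0,1]$ so the constrained minimum is at $\eta = 1$, giving $q(1) = g + \tfrac{L}{2}R^2 \le g + \tfrac{1}{2}(-g)\cdot\tfrac{LR^2}{-g}\cdot(-g)$... more simply, $q(1) = g + \tfrac{L R^2}{2} < -LR^2 + \tfrac{LR^2}{2} = -\tfrac{LR^2}{2}$, and one checks $-\tfrac{LR^2}{2} \le -\tfrac{g^2}{2LR^2}$ is false in general, so in this regime the cleaner bound is $q(1) \le -\tfrac{LR^2}{2}$. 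Hmm — to match the stated bound $-\min\{\tfrac{b}{2LR^2}g^2, \tfrac{LR^2}{2b}\}$ I need the factor $b$, which enters because the natural displacement is measured relative to mass $b$; I would track the $b$-scaling carefully through $\|\nu - \mu_k\|$ and the step, absorbing it so that the two cases read exactly $-\tfrac{b}{2LR^2}g^2$ (when $-g \le LR^2/b$, say) and $-\tfrac{LR^2}{2b}$ otherwise, then combine as a single $\min$. The main obstacle is bookkeeping: getting the $b$-dependence in the two branches to land precisely as written, and making sure the case split on $\eta^\star \lessgtr 1$ corresponds exactly to the two terms inside the $\min$; everything else is the routine Frank-Wolfe descent computation.
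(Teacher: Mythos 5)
Your overall route is exactly the paper's: for any $\eta\in[0,1]$ the measure $(1-\eta)\mu_k+\eta\,b\delta_{x^*(\mu_k)}$ lies in $\mathrm{conv}(A_{k+1})$, so the fully corrective minimizer $\mu_{k+1}$ can only do better; bound the decrease along this segment by a smoothness-induced quadratic in $\eta$ whose linear term is $\eta\,h_{\mu_k}(x^*(\mu_k))$ via $J'_{\mu_k}(b\delta_{x^*(\mu_k)}-\mu_k)=h_{\mu_k}(x^*(\mu_k))$; then optimize over $\eta\in[0,1]$, the case split $\eta^\star\lessgtr 1$ producing the two branches of the $\min$. The paper implements precisely this with the explicit step $t_k=\min\{-b\,h_{\mu_k}(x^*(\mu_k))/(LR^2),\,1\}$. (Both you and the paper implicitly use $\mu_k\in\mathrm{conv}(A_{k+1})$, which is automatic for $k\geq 1$ or when $A_0=\{\mu_0\}$.)

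The one genuine gap is the coefficient of the quadratic term, which is exactly where the $b$ you flag as "bookkeeping" lives, and your guess about its source is misplaced. The correct descent inequality in this measure setting is $J(\tilde\mu)-J(\mu)-J'_\mu(\tilde\mu-\mu)\leq \frac{L}{2b}\|\tilde\mu-\mu\|^2$ (the Smooth Functional Inequality in the paper's appendix), not $\frac{L}{2}\|\tilde\mu-\mu\|^2$. The $1/b$ does not come from tracking $b$ through $\|b\delta_{x^*(\mu_k)}-\mu_k\|$ (that quantity is simply bounded by $R$); it comes from the normalization linking the influence function to the von Mises derivative, $h_\mu(x)=b\,\phi_\mu(x)+\mathrm{const}$, equivalently $J'_\mu(\nu-\mu)=\frac{1}{b}\mathbb{E}_{x\sim\nu}[h_\mu(x)]+\mathrm{const}$, so integrating the $L$-smoothness bound $\sup_x|h_{\mu_t}(x)-h_\mu(x)|\leq L\|\mu_t-\mu\|$ along the segment picks up a factor $1/b$. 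With the corrected bound $q(\eta)=\eta g+\frac{L}{2b}\eta^2R^2$, where $g:=h_{\mu_k}(x^*(\mu_k))\leq 0$, the unconstrained minimizer is $\eta^\star=-bg/(LR^2)$: if $\eta^\star\leq 1$ then $q(\eta^\star)=-\frac{b}{2LR^2}g^2$, and if $\eta^\star>1$ (i.e.\ $-g>LR^2/b$) then $q(1)=g+\frac{LR^2}{2b}\leq-\frac{LR^2}{2b}$, which is exactly the stated $\min$. So your case analysis is structurally right; it just needs the $\frac{L}{2b}$ coefficient (and the resulting clipping threshold $-g\leq LR^2/b$) to land on the lemma's constants.
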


\begin{proof}{Proof.}
Let \(\mu_{k+0.5} := (1-t_k)\mu_k + t_kb\delta_{x^*(\mu_k)}\), with \(t_k = \min \left\{ \frac{-b}{LR^2}h_{\mu_k}(x^*(\mu_k)), \, 1\right\}\). From the \(L\)-smoothness of \(J\), we can write:
\begin{align}
    J(\mu_{k+0.5}) - J(\mu_k) &\leq J'_{\mu_k}(\mu_{k+0.5} - \mu_k) + \frac{L}{2b}\|\mu_{k+0.5} - \mu_k\|^2 \nonumber \\
    &= t_k J'_{\mu_k}(b\delta_{x^*(\mu_k)} - \mu_k) + \frac{L}{2b}t_k^2\|b\delta_{x^*(\mu_k)} - \mu_k\|^2 \nonumber \\
    &\leq t_k h_{\mu_k}(x^*(\mu_k)) + \frac{L}{2b}t_k^2 R^2 \nonumber \\
    &\leq - \min \left\{ \frac{bh_{\mu_k}(x^*(\mu_k))^2}{2LR^2}, \, \frac{LR^2}{2b} \right\}.
\end{align}

Since \(t_k \in [0,1]\), it follows that \(\mu_{k+0.5} \in \mathrm{conv}(A_{k+1})\). The optimality of \(\mu_{k+1}\) implies that \(J(\mu_{k+1}) \leq J(\mu_{k+0.5})\). Therefore, the inequality~\eqref{eq:decrease} holds.
\hfill \Halmos
\end{proof}

\begin{theorem}[Convergence of fc-FW]\label{thm:convfcFW}  
Suppose \( J \) is \( L \)-smooth, and the influence function satisfies \( h_{\mu_k}(x^*(\mu_k)) \leq 0 \) for all \( k \geq 1 \). Let $J^* := \inf\left\{J(\mu) : \mu \in \mathcal{M}^+(\mathcal{X},b)\right\}$ 
and assume \( J^* > -\infty \). Then, the sequence \( \{J(\mu_k)\}_{k \geq 1} \) generated by fc-FW in Algorithm~\ref{alg:fcFW} is convergent.

Moreover, $\lim_{k \to \infty} h_{\mu_k} (x^*(\mu_k)) = 0$ and $\sum_k |h_{\mu_k}(x^*(\mu_k))| < \infty$.

Finally, there exists an iteration \( k_0 > 0 \) such that \( \left| h_{\mu_{k_0}} (x^*(\mu_{k_0})) \right| < LR^2/b \), and then for all \( n \geq k_0 \),
\begin{equation}
    \min_{1 \leq k \leq n} \left| h_{\mu_k} (x^*(\mu_k)) \right| \leq \frac{1}{\sqrt{n}} \left( \frac{2 L R^2}{b} (J(\mu_1) - J^*) \right)^{\frac{1}{2}},
\end{equation}  
where \(R = \sup\{\|\mu_1-\mu_2\| : \mu_1, \mu_2 \in \mathcal{M}^+(\mathcal{X},b)\}.\) 
\end{theorem}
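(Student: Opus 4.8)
The plan is to derive all three assertions from the sufficient-decrease bound of Lemma~\ref{lemma:decrease}. Abbreviate $c_k := |h_{\mu_k}(x^*(\mu_k))|$ and $D_k := J(\mu_k)-J(\mu_{k+1})$, and set $m_k := \min\bigl\{\tfrac{b}{2LR^2}c_k^2,\ \tfrac{LR^2}{2b}\bigr\}$. The hypothesis $h_{\mu_k}(x^*(\mu_k)) \le 0$ lets us invoke Lemma~\ref{lemma:decrease} for every $k \ge 1$, giving $D_k \ge m_k \ge 0$. Since the iterates $\mu_k$ remain in $\mathcal{M}^+(\mathcal{X},b)$ (convex combinations of $\mu_0$ and scaled Dirac measures), we have $J(\mu_k) \ge J^*$, so $\{J(\mu_k)\}_{k\ge1}$ is non-increasing and bounded below, hence convergent — the first assertion — and moreover $\sum_{k\ge1}D_k = J(\mu_1)-\lim_k J(\mu_k) \le J(\mu_1)-J^* < \infty$.

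For the second assertion, $D_k \ge m_k$ together with $\sum_k D_k < \infty$ forces $\sum_{k\ge1}m_k < \infty$, so $m_k \to 0$. Because $\tfrac{LR^2}{2b}$ is a fixed positive constant, $m_k$ can realize it only finitely often; for every other $k$ we have $c_k < LR^2/b$ and $m_k = \tfrac{b}{2LR^2}c_k^2$. Hence $c_k \to 0$, and summing, $\sum_k h_{\mu_k}(x^*(\mu_k))^2 < \infty$ (the finitely many exceptional indices contribute a finite amount), which gives the stated limit and summability.

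For the third assertion, average the decrease over the first $n$ iterations:
\[
\min_{1\le k\le n} m_k \;\le\; \frac1n\sum_{k=1}^n m_k \;\le\; \frac1n\sum_{k=1}^n D_k \;=\; \frac{J(\mu_1)-J(\mu_{n+1})}{n} \;\le\; \frac{J(\mu_1)-J^*}{n}.
\]
Choose $k_0$ with $c_{k_0} < LR^2/b$ (possible as $c_k\to0$) and large enough that $\tfrac{J(\mu_1)-J^*}{k_0} < \tfrac{LR^2}{2b}$. For any $n \ge k_0$ the left-hand minimum is strictly below $\tfrac{LR^2}{2b}$, so it is attained at some index $k^*$ with $c_{k^*} < LR^2/b$, where $m_{k^*} = \tfrac{b}{2LR^2}c_{k^*}^2$; since $m_k$ is non-decreasing in $c_k$ on $\{c_k < LR^2/b\}$ and equals the larger constant $\tfrac{LR^2}{2b}$ elsewhere, the same $k^*$ attains $\min_{1\le k\le n}c_k$. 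Rearranging $\tfrac{b}{2LR^2}c_{k^*}^2 \le \tfrac{J(\mu_1)-J^*}{n}$ yields $\min_{1\le k\le n}c_k \le \bigl(\tfrac{2LR^2}{b}\cdot\tfrac{J(\mu_1)-J^*}{n}\bigr)^{1/2}$, which is the claimed bound.

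The telescoping and the averaging are routine; the delicate point throughout is the $\min$ of two quantities in Lemma~\ref{lemma:decrease}, which is precisely the residue of not solving the Frank-Wolfe subproblem to global optimality. One must separate the at-most-finitely-many iterations with $c_k \ge LR^2/b$ — on each of which $J$ drops by the fixed amount $\tfrac{LR^2}{2b}$, so they cannot recur indefinitely — from the remaining iterations, where the decrease is genuinely quadratic in $c_k$ and hence drives both the summability and the $\mathcal{O}(1/\sqrt{n})$ rate. Pinning down $k_0$ past all such large-influence iterations (equivalently, large enough that the running average $n^{-1}(J(\mu_1)-J^*)$ has fallen below $\tfrac{LR^2}{2b}$) is what ties the three assertions together.
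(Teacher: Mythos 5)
Your argument follows essentially the same route as the paper's proof: invoke Lemma~\ref{lemma:decrease}, telescope the resulting decrease to obtain convergence of $\{J(\mu_k)\}$ together with $\sum_{k}\min\bigl\{\tfrac{b}{2LR^2}h_{\mu_k}(x^*(\mu_k))^2,\tfrac{LR^2}{2b}\bigr\}<\infty$, and then separate the two branches of the $\min$ to extract the limit and the $\mathcal{O}(1/\sqrt{n})$ rate. Your rate argument differs only cosmetically: you pick $k_0$ satisfying both $|h_{\mu_{k_0}}(x^*(\mu_{k_0}))|<LR^2/b$ and $k_0> 2b(J(\mu_1)-J^*)/(LR^2)$, whereas the paper, once any index with $|h_{\mu_{k_0}}(x^*(\mu_{k_0}))|<LR^2/b$ exists, bounds $n\min_{1\le k\le n}h_{\mu_k}(x^*(\mu_k))^2\le\sum_{k=1}^{n}\min\{h_{\mu_k}(x^*(\mu_k))^2,(LR^2/b)^2\}$ directly, because the running minimum is below the constant branch for every summand; both deliver the existential form of the stated bound, and your identification of the minimizing index $k^*$ and the comparison of $m_k$ with $c_k$ across the two branches are sound.

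The one substantive caveat is the middle clause. You establish $\sum_k h_{\mu_k}(x^*(\mu_k))^2<\infty$ and then say this ``gives the stated limit and summability,'' but the theorem asserts $\sum_k|h_{\mu_k}(x^*(\mu_k))|<\infty$, and square-summability does not imply absolute summability (take $c_k=1/k$). So that clause is not proved by your argument. To be fair, the paper's own proof is silent on it as well and likewise yields only square-summability of the influence values, so you match everything the published proof actually establishes; but you should either prove the absolute-summability claim by a separate argument or refrain from claiming it, rather than folding it into the square-summability conclusion.
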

\begin{proof}{Proof.}
By Lemma~\ref{lemma:decrease}, the sequence \( \{ J(\mu_k) \} \) is decreasing. Furthermore, since \( J(\mu_k) \geq J^* > -\infty \), it is bounded below. Therefore, the sequence \( \{ J(\mu_k) \} \) converges.

Summing both sides of~\eqref{eq:decrease} from \( k=1 \) to \( n \), we obtain
\begin{equation}\label{eq:upbound}
    S_n:=\sum_{k=1}^{n} \min \left\{ h_{\mu_k} (x^*(\mu_k))^2, \left(\frac{LR^2}{b}\right)^2 \right\} 
    \leq \frac{2 L R^2}{b} (J(\mu_1) - J(\mu_{n+1})) 
    \leq \frac{2 L R^2}{b} (J(\mu_1) - J^*).
\end{equation}
Since the sequence \(S_n\)
is increasing and bounded above, it follows that \( S_n \) converges to a finite limit. Consequently, the summands must tend to zero in the limit, that is,
\begin{equation*}
    \lim_{k \to \infty} \min \left\{ h_{\mu_k} (x^*(\mu_k))^2, \left(\frac{LR^2}{b}\right)^2 \right\} = 0.
\end{equation*}
Since \( (LR^2/b)^2 \) is a constant, we conclude that $\lim_{k \to \infty} h_{\mu_k} (x^*(\mu_k)) = 0$,
and there exists some \( k_0 > 0 \) such that \( \left| h_{\mu_{k_0}} (x^*(\mu_{k_0})) \right| < LR^2 / b\). Then, for any \( n \geq k_0 \), we obtain the inequality
\begin{equation*}
    n \, \min_{1 \leq k \leq n} h_{\mu_k} (x^*(\mu_k))^2 
    \leq \sum_{k=1}^{n} \min \left\{ h_{\mu_k} (x^*(\mu_k))^2, \left(\frac{LR^2}{b}\right)^2 \right\}.
\end{equation*}
Applying~\eqref{eq:upbound}, we derive
\begin{equation*}
    \min_{1 \leq k \leq n} \left| h_{\mu_k} (x^*(\mu_k)) \right| 
    \leq \frac{1}{\sqrt{n}} \left( \frac{2 L R^2}{b} (J(\mu_1) - J^*) \right)^{\frac{1}{2}}, \quad \forall n \geq k_0.
\end{equation*}
This proves the theorem. \hfill \Halmos
\end{proof}

The results in Lemma~\ref{lemma:decrease} and Theorem~\ref{thm:convfcFW} highlight important theoretical properties of fc-FW. Below, we summarize the key implications:

\begin{itemize}
    \item[(a)] Unlike dFW, fc-FW does not require solving the subproblem in Step 3 of Algorithm~\ref{alg:fcFW} to global optimality. Instead, it suffices to find a local minimizer \( x^*(\mu_k) \) such that \( h_{\mu_k}(x^*(\mu_k)) \leq 0 \). This is a significant advantage in practice, especially when \( h_{\mu}(x) \) is nonconvex and global optimization is computationally infeasible. The ability to use a good local minimizer enhances the algorithm's scalability and practical applicability.
    \item[(b)] The theoretical behavior of fc-FW closely resembles that of gradient descent (GD) in Euclidean spaces. In both methods, a first-order object guides the updates— the gradient in GD and the influence function \( h_{\mu}(x) \) in fc-FW. Under \( L \)-smoothness, both methods ensure a decrease in the objective value at each iteration. Lemma~\ref{lemma:decrease} establishes this for fc-FW, analogous to the standard descent property in GD. In both methods, global convergence is not guaranteed. In GD, the norm of the gradient converges to zero as the algorithm progresses. Similarly, Theorem~\ref{thm:conv} guarantees that \( h_{\mu_k}(x^*(\mu_k)) \to 0 \) at a rate of \( O(1/\sqrt{k}) \), mirroring the behavior of the gradient norm. Moreover, when fc-FW is analyzed under the same assumptions as Theorem~\ref{thm:dFWcomp}, it maintains an overall function value convergence rate of \( O(k^{-1}) \), just as gradient descent does in convex optimization. These similarities help build intuition about the theoretical behavior of fc-FW.
\end{itemize}

\section{PROBLEM REGULARITY.}\label{sec:regularity}

The optimization problem \eqref{mainopt} involves minimizing the functional $J(\cdot)$ over the set $\mathcal{M}^+(\mathbb{R}^2,b)$. Recall, however, that emergency incidents occur only within the region \( \mathcal{S} \subset \mathbb{R}^2 \). It is natural to consider whether restricting the optimization domain to \( \mathcal{M}^+(\mbox{cvx}(\mathcal{S}),b) \) is sufficient. Here, \( \mbox{cvx}(\mathcal{S}) \) denotes the closure of the convex hull of \( \mathcal{S} \): $$\mbox{cvx}(\mathcal{S}) := {\rm{cl}}\left(\bigcap_{\alpha}\{\mathcal{S}_{\alpha} \subseteq \mathbb{R}^2 \mbox{ is convex  and } \mathcal{S}_{\alpha} \supseteq \mathcal{S}\}\right).$$ A key intuition behind this restriction is that placing a volunteer outside $\mbox{cvx}(\mathcal{S})$ should not be optimal --- since all incident locations \( y \) lie within \( \mathcal{S} \), projecting any mass outside \( \mbox{cvx}(\mathcal{S}) \) onto its boundary should reduce the response time to all incidents, thereby decreasing or preserving the death rate. This implies that restricting the feasible set to measures supported on \( \mbox{cvx}(\mathcal{S}) \) should not eliminate optimal solutions, motivating the restricted problem
\begin{align}\label{restrictedopt} \mbox{min.} & \quad J(\mu) = \int_{\mathcal{S}} \eta(\mbox{d}y) \int_0^\infty \exp\Big\{-\mu(\bar{B}(y,t))\Big\} \, \mathop{d\death(t)}, \nonumber \\ \mbox{s.t.} & \quad \mu \in \mathcal{M}^+(\mbox{cvx}(\mathcal{S}),b), \tag{$P$}\end{align} where $\bar{B}(y,t):= B(y,t) \,\, \cap \,\, \mathcal{S}.$ The following theorem formalizes this result. Let int$(A)$ and $\partial(A)$ denote the interior and boundary of the set $A$ respectively.

\begin{theorem}[Restricted Problem Sufficiency]\label{thm:restrict} For each $\nu \in \mathcal{M}^+(\mathbb{R}^2,b)$, define the restricted measure \begin{align*}\label{pushforward}  \tilde{\nu}(A) := \nu\left(A \, \cap \, \rm{int}\left(\rm{cvx}(\mathcal{S})\right)\right) +  \nu\left(\Pi^{-1}_{\rm{cvx}(\mathcal{S})}\left(A \, \cap \, \partial(\rm{cvx}(\mathcal{S}))\right)\right), \quad A \in \mathcal{B}(\mathbb{R}^2). \end{align*} Then, $\tilde{\nu} \in \mathcal{M}^+(\emph{\mbox{cvx}}(\mathcal{S}),b),\mbox{ and } J(\tilde{\nu}) \leq J(\nu).$\end{theorem}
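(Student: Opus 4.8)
The plan is to establish the two assertions in turn: first that $\tilde\nu$ is a bona fide element of $\mathcal{M}^+(\mathrm{cvx}(\mathcal S),b)$, and then that $J(\tilde\nu)\le J(\nu)$. The second assertion will be reduced to the pointwise claim that $\tilde\nu(\bar B(y,t))\ge\nu(\bar B(y,t))$ for every incident location $y\in\mathcal S$ and every radius $t\ge 0$, after which monotonicity of $u\mapsto e^{-u}$ and of the integral will finish the argument.

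For the first assertion, I would write $\tilde\nu=\tilde\nu_1+\tilde\nu_2$ with $\tilde\nu_1(A):=\nu(A\cap\mathrm{int}(\mathrm{cvx}(\mathcal S)))$ and $\tilde\nu_2(A):=\nu(\Pi^{-1}_{\mathrm{cvx}(\mathcal S)}(A\cap\partial(\mathrm{cvx}(\mathcal S))))$. The term $\tilde\nu_1$ is the restriction of $\nu$ to an open (hence Borel) set, so it is a non-negative measure. For $\tilde\nu_2$, the metric projection $\Pi_{\mathrm{cvx}(\mathcal S)}$ is continuous (indeed $1$-Lipschitz) and $\partial(\mathrm{cvx}(\mathcal S))$ is closed, so $\Pi^{-1}_{\mathrm{cvx}(\mathcal S)}(A\cap\partial(\mathrm{cvx}(\mathcal S)))\in\mathcal B(\mathbb R^2)$ whenever $A\in\mathcal B(\mathbb R^2)$; moreover preimages under a map commute with unions and carry disjoint sets to disjoint sets, so $\tilde\nu_2$ is countably additive, non-negative, and vanishes on $\emptyset$. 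Hence $\tilde\nu$ is a non-negative Borel measure. For the total mass I would invoke two elementary facts about projection onto a closed convex set: every point of the set is its own projection, and the projection of any point not in the set lies on the boundary; together these give $\Pi^{-1}_{\mathrm{cvx}(\mathcal S)}(\partial(\mathrm{cvx}(\mathcal S)))=\mathbb R^2\setminus\mathrm{int}(\mathrm{cvx}(\mathcal S))$, so
\[
\tilde\nu(\mathbb R^2)=\nu\big(\mathrm{int}(\mathrm{cvx}(\mathcal S))\big)+\nu\big(\mathbb R^2\setminus\mathrm{int}(\mathrm{cvx}(\mathcal S))\big)=\nu(\mathbb R^2)=b .
\]
Finally, if $A\cap\mathrm{cvx}(\mathcal S)=\emptyset$ then both $A\cap\mathrm{int}(\mathrm{cvx}(\mathcal S))$ and $A\cap\partial(\mathrm{cvx}(\mathcal S))$ are empty, so $\tilde\nu(A)=0$; thus $\mathrm{supp}(\tilde\nu)\subseteq\mathrm{cvx}(\mathcal S)$ and $\tilde\nu\in\mathcal{M}^+(\mathrm{cvx}(\mathcal S),b)$.

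For the second assertion, fix $y\in\mathcal S$ and $t\ge 0$ and put $C:=\bar B(y,t)=B(y,t)\cap\mathcal S$. Since $\mathcal S\subseteq\mathrm{cvx}(\mathcal S)$ and $\mathrm{cvx}(\mathcal S)$ is the disjoint union of $\mathrm{int}(\mathrm{cvx}(\mathcal S))$ and $\partial(\mathrm{cvx}(\mathcal S))$, we have $C=(C\cap\mathrm{int}(\mathrm{cvx}(\mathcal S)))\sqcup(C\cap\partial(\mathrm{cvx}(\mathcal S)))$, and therefore
\[
\tilde\nu(C)-\nu(C)=\nu\big(\Pi^{-1}_{\mathrm{cvx}(\mathcal S)}(C\cap\partial(\mathrm{cvx}(\mathcal S)))\big)-\nu\big(C\cap\partial(\mathrm{cvx}(\mathcal S))\big)\ge 0 ,
\]
where the inequality uses that $C\cap\partial(\mathrm{cvx}(\mathcal S))\subseteq\mathrm{cvx}(\mathcal S)$ is fixed pointwise by $\Pi_{\mathrm{cvx}(\mathcal S)}$ and hence is contained in its own preimage. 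Thus $\tilde\nu(\bar B(y,t))\ge\nu(\bar B(y,t))$ for all $y\in\mathcal S$ and $t\ge 0$, so $\exp\{-\tilde\nu(\bar B(y,t))\}\le\exp\{-\nu(\bar B(y,t))\}$ everywhere; integrating this bound against the probability measure $\eta(\mathrm dy)$ and the finite positive Lebesgue--Stieltjes measure $\mathrm d\death(t)$ (finite and positive since $\death$ is continuous and increasing with values in $[0,1]$) yields $J(\tilde\nu)\le J(\nu)$.

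I expect the only delicate part to be the measure-theoretic bookkeeping for $\tilde\nu_2$ — Borel-measurability of $\Pi^{-1}_{\mathrm{cvx}(\mathcal S)}(A\cap\partial(\mathrm{cvx}(\mathcal S)))$, its countable additivity, and the identity $\Pi^{-1}_{\mathrm{cvx}(\mathcal S)}(\partial(\mathrm{cvx}(\mathcal S)))=\mathbb R^2\setminus\mathrm{int}(\mathrm{cvx}(\mathcal S))$ used in the mass count — together with the degenerate case in which $\mathrm{cvx}(\mathcal S)$ has empty interior, where $\partial(\mathrm{cvx}(\mathcal S))=\mathrm{cvx}(\mathcal S)$, $\tilde\nu_1\equiv 0$, and $\tilde\nu=\tilde\nu_2$; the decomposition above is written so that nothing changes in that case. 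The conceptual crux, by contrast, is the one-line observation that $\Pi_{\mathrm{cvx}(\mathcal S)}$ fixes $\mathrm{cvx}(\mathcal S)$ pointwise, so reprojecting can only pile additional mass onto sets lying inside $\mathrm{cvx}(\mathcal S)$ — in particular onto every $\bar B(y,t)$ — which is precisely why the death rate cannot increase.
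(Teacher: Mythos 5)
Your proof is correct and takes essentially the same route as the paper's: both rest on the observation that $\Pi_{\mathrm{cvx}(\mathcal S)}$ fixes $\mathrm{cvx}(\mathcal S)$ pointwise, so $\Pi^{-1}_{\mathrm{cvx}(\mathcal S)}(A\cap\partial(\mathrm{cvx}(\mathcal S)))\supseteq A\cap\partial(\mathrm{cvx}(\mathcal S))$, giving $\tilde\nu(\bar B(y,t))\ge\nu(\bar B(y,t))$ and hence $J(\tilde\nu)\le J(\nu)$ by monotonicity of $e^{-u}$, with the mass count handled via the preimage of $\mathrm{cvx}(\mathcal S)$ being all of $\mathbb R^2$. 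Your version merely adds extra bookkeeping (measurability and countable additivity of the projected piece, the empty-interior case) that the paper leaves implicit.
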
 

\begin{proof}{Proof.}
Since $\nu \in \mathcal{M}^+(\mathbb{R}^2,b)$, by Definition~\ref{def:projection}  \begin{equation*}
     \tilde{\nu}\left(\rm{cvx}(\mathcal{S})\right) = \nu\left(\Pi^{-1}_{\rm{cvx}(\mathcal{S})}\left(\rm{cvx}(\mathcal{S})\right)\right)=\nu\left(\mathbb{R}^2 \right)=b,
\end{equation*} that is $\tilde{\nu}$ is in $\mathcal{M}^+(\mbox{cvx}(\mathcal{S}),b)$. For any $ A \in \mathcal{B}(\mathbb{R}^2)$, $\Pi^{-1}_{\rm{cvx}(\mathcal{S})}\left(A \, \cap \, \partial(\rm{cvx}(\mathcal{S}))\right) \supseteq A \, \cap \, \partial(\rm{cvx}(\mathcal{S}))$.
Then, for $\forall A \in \mathcal{B}(\mathcal{S})$,
\begin{align} \label{tildeVgeqV}
    \tilde{\nu}\left(A \right)&=\nu\left(A \, \cap \, \rm{int}\left(\rm{cvx}(\mathcal{S})\right)\right) +  \nu\left(\Pi^{-1}\left(A \, \cap \, \partial(\rm{cvx}(\mathcal{S}))\right)\right) \nonumber \\ &\geq \nu\left(A \, \cap \, \rm{int}\left(\rm{cvx}(\mathcal{S})\right)\right) +  \nu\left(A \, \cap \, \partial(\rm{cvx}(\mathcal{S}))\right) = \nu\left(A \right).
\end{align}
Now, according to \eqref{tildeVgeqV},
\begin{align*}
    J(\tilde{\nu})& =\int_{\mathcal{S}} \eta(\mbox{d}x) \int_0^{\infty} \exp\Big\{-\tilde{\nu}(\bar{B}(x,t))\Big\} \, \mbox{d}\death(t) \\ & \le \int_{\mathcal{S}} \eta(\mbox{d}x) \int_0^{\infty} \exp\Big\{-\nu(\bar{B}(x,t))\Big\} \, \mbox{d}\death(t) = J(\nu),
\end{align*}
where $\bar{B}(x,t)= B(x,t) \,\, \cap \,\, \mathcal{S}$ is a set in $ \mathcal{B}(\mathcal{S})$, proving the assertion.   \hfill \Halmos
\end{proof}

Next, we state a standard result that the set of measures $\mathcal{M}^+(\mathrm{cvx}(\mathcal{S}), b)$ is convex and is compact under the weak$^*$ topology if the set $\mathcal{S}$ is compact. These properties aid the solution of Problem~\eqref{restrictedopt}. 

\begin{theorem}[Convexity and Compactness of Feasible Region]\label{thm:feasibleregion}  The following assertions about the set $\mathcal{M}^+(\mbox{cvx}(\mathcal{S}),b)$ hold. \begin{enumerate} \item[(a)] $\mathcal{M}^+(\mbox{cvx}(\mathcal{S}),b)$ is convex, that is, for $\mu_1, \mu_2 \in \mathcal{M}^+(\emph{\mbox{cvx}}(\mathcal{S}),b)$ and $\alpha \in [0,1]$, $$\alpha \mu_1 + (1-\alpha)\mu_2 \in \mathcal{M}^+(\emph{\mbox{cvx}}(\mathcal{S}),b).$$ \item[(b)] If $\mathcal{S} \subset \mathbb{R}^2$ is compact, then $\mathcal{M}^+(\mbox{cvx}(\mathcal{S}),b)$ is compact under the weak$^*$ topology, and hence, for each sequence $\{\mu_n, n \geq 1\}, \mu_n \in \mathcal{M}^+(\emph{\mbox{cvx}}(\mathcal{S}),b)$ there exists a convergent subsequence with limit in $\mathcal{M}^+(\emph{\mbox{cvx}}(\mathcal{S}),b)$.\end{enumerate}
\end{theorem}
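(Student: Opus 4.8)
The statement has two parts: convexity of $\mathcal{M}^+(\mathrm{cvx}(\mathcal{S}),b)$ and its weak$^*$ compactness when $\mathcal{S}$ is compact. Part (a) is immediate: if $\mu_1,\mu_2$ are non-negative Borel measures on $\mathrm{cvx}(\mathcal{S})$ with total mass $b$, then for $\alpha\in[0,1]$ the set function $\alpha\mu_1+(1-\alpha)\mu_2$ is again a non-negative Borel measure on $\mathrm{cvx}(\mathcal{S})$ (countable additivity and non-negativity are preserved under non-negative linear combinations), and its total mass is $\alpha b+(1-\alpha)b=b$. So I would dispense with (a) in one or two sentences.

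For part (b), the key observation is that $\mathrm{cvx}(\mathcal{S})$ is a compact subset of $\mathbb{R}^2$: it is the closure of the convex hull of the compact (hence bounded) set $\mathcal{S}$, so it is closed and bounded in $\mathbb{R}^2$, hence compact. The plan is then to invoke the standard functional-analytic machinery. View each $\mu\in\mathcal{M}^+(\mathrm{cvx}(\mathcal{S}),b)$ as a bounded linear functional on $C(\mathrm{cvx}(\mathcal{S}))$ via $f\mapsto\int f\,d\mu$; by the Riesz representation theorem this identifies $\mathcal{M}^+(\mathrm{cvx}(\mathcal{S}),b)$ with a subset of the dual space $C(\mathrm{cvx}(\mathcal{S}))^*$. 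Every such functional has operator norm exactly $b$ (testing against $f\equiv 1$ and using non-negativity), so $\mathcal{M}^+(\mathrm{cvx}(\mathcal{S}),b)$ is contained in the closed ball of radius $b$ in $C(\mathrm{cvx}(\mathcal{S}))^*$, which is weak$^*$ compact by the Banach--Alaoglu theorem. It remains to show that $\mathcal{M}^+(\mathrm{cvx}(\mathcal{S}),b)$ is weak$^*$ closed: if $\mu_\lambda\to\mu$ in the weak$^*$ sense, then $\int f\,d\mu_\lambda\to\int f\,d\mu$ for every $f\in C(\mathrm{cvx}(\mathcal{S}))$; taking $f\geq 0$ shows $\mu$ is non-negative, and taking $f\equiv 1$ shows $\mu(\mathrm{cvx}(\mathcal{S}))=b$. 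A weak$^*$ closed subset of a weak$^*$ compact set is weak$^*$ compact, giving the first claim of (b).

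For the sequential statement — every sequence has a weak$^*$ convergent subsequence with limit in the set — I would note that since $\mathcal{S}\subset\mathbb{R}^2$ is compact, $C(\mathrm{cvx}(\mathcal{S}))$ is separable, and the weak$^*$ topology on the norm-bounded set $\mathcal{M}^+(\mathrm{cvx}(\mathcal{S}),b)$ is therefore metrizable; on a metrizable compact space, compactness and sequential compactness coincide. Hence every sequence in $\mathcal{M}^+(\mathrm{cvx}(\mathcal{S}),b)$ admits a weak$^*$ convergent subsequence whose limit lies in $\mathcal{M}^+(\mathrm{cvx}(\mathcal{S}),b)$.

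**Main obstacle.** There is no serious obstacle — this is a textbook application of Banach--Alaoglu plus Riesz representation, which is why the paper calls it "standard." The only points requiring a modicum of care are (i) verifying that $\mathrm{cvx}(\mathcal{S})$ is itself compact (so that $C(\mathrm{cvx}(\mathcal{S}))$ is a genuine Banach space of continuous functions on a compact metric space, with a separable structure), and (ii) being careful that "weak$^*$ convergence" and the weak$^*$ topology are the relevant notions — i.e., convergence tested against continuous functions, not the total-variation notion $\|\cdot\|$ used elsewhere in the paper for $L$-smoothness. I would include a one-line remark flagging this distinction to avoid confusion with~\eqref{totvar}.
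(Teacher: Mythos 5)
Your proof is correct, but it takes a genuinely different route from the paper's. You package part (b) as Banach--Alaoglu plus a weak$^*$-closedness check: embed $\mathcal{M}^+(\mathrm{cvx}(\mathcal{S}),b)$ via Riesz representation into the dual ball of radius $b$ of $C(\mathrm{cvx}(\mathcal{S}))$, note that this ball is weak$^*$ compact, verify that non-negativity and total mass $b$ survive weak$^*$ limits, and then pass from compactness to sequential compactness using metrizability of the weak$^*$ topology on norm-bounded sets (separability of $C(\mathrm{cvx}(\mathcal{S}))$). The paper instead proves the sequential statement directly and by hand: it fixes a countable dense family $\{f_j\}$ in $C(\mathrm{cvx}(\mathcal{S}),\mathbb{R})$, applies Bolzano--Weierstrass to the bounded scalar sequences $\{\mu_n(f_j)\}$, extracts a diagonal subsequence $\{\mu_n^{(n)}\}$, shows by a density/Cauchy argument that $\mu_n^{(n)}(f)$ converges for every continuous $f$, and only then invokes the Riesz representation theorem to identify the limiting functional with an element of $\mathcal{M}^+(\mathrm{cvx}(\mathcal{S}),b)$ --- in effect re-proving the sequential Banach--Alaoglu (Helly-type selection) theorem in the separable setting. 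Both arguments rest on the same two ingredients, separability of $C(\mathrm{cvx}(\mathcal{S}))$ and Riesz representation; yours is shorter and makes explicit that the limit is non-negative with mass exactly $b$ (a point the paper leaves implicit when it applies Riesz), while the paper's is self-contained and avoids citing Banach--Alaoglu or metrizability. Your preliminary remarks --- that $\mathrm{cvx}(\mathcal{S})$ is itself compact and that the relevant convergence is tested against continuous functions rather than in the total-variation quantity~\eqref{totvar} --- are consistent with how the paper uses these notions.
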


We next demonstrate that the functional $J: \mathcal{M}^+(\mbox{cvx}(\mathcal{S}),b) \to \mathbb{R}^+$ is convex on $\mathcal{M}^+(\mbox{cvx}(\mathcal{S}),b)$. 
\begin{theorem}[Convexity of Objective]\label{thm:conv} The functional $J$ defined in~\eqref{objexp} is convex on $\mathcal{M}^+(\mathrm{cvx}(\mathcal{S}),b)$, that is, for $\mu_1, \mu_2 \in \mathcal{M}^+(\mbox{cvx}(\mathcal{S}),b)$ and $\alpha \in (0,1)$, $$J(\alpha \mu_1 + (1-\alpha)\mu_2) \leq \alpha J(\mu_1) + (1-\alpha) J(\mu_2).$$ \end{theorem}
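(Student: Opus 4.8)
The plan is to prove convexity of $J$ pointwise in the integrand. Fix an incident location $x \in \mathcal{S}$ and a radius $t \geq 0$, and consider the map $\mu \mapsto \exp\{-\mu(\bar{B}(x,t))\}$. Since $\mu \mapsto \mu(\bar{B}(x,t))$ is a \emph{linear} functional on $\mathcal{M}^+(\mathrm{cvx}(\mathcal{S}),b)$ — indeed $(\alpha\mu_1 + (1-\alpha)\mu_2)(\bar{B}(x,t)) = \alpha\mu_1(\bar{B}(x,t)) + (1-\alpha)\mu_2(\bar{B}(x,t))$ — and since $r \mapsto e^{-r}$ is convex on $\mathbb{R}$, the composition $\mu \mapsto \exp\{-\mu(\bar{B}(x,t))\}$ is convex. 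Concretely, for $\mu_1, \mu_2 \in \mathcal{M}^+(\mathrm{cvx}(\mathcal{S}),b)$ and $\alpha \in (0,1)$,
\begin{equation*}
\exp\Big\{-(\alpha\mu_1 + (1-\alpha)\mu_2)(\bar{B}(x,t))\Big\} \leq \alpha \exp\Big\{-\mu_1(\bar{B}(x,t))\Big\} + (1-\alpha)\exp\Big\{-\mu_2(\bar{B}(x,t))\Big\}.
\end{equation*}

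Next I would integrate this inequality. Integrating first in $t$ against the (nonnegative) measure $\mathrm{d}\death(t)$ — legitimate since $\death$ is increasing, so $\mathrm{d}\death$ is a nonnegative measure — and then in $x$ against the probability measure $\eta(\mathrm{d}x)$, and using linearity and monotonicity of the integral, yields exactly
\begin{equation*}
J(\alpha\mu_1 + (1-\alpha)\mu_2) \leq \alpha J(\mu_1) + (1-\alpha)J(\mu_2),
\end{equation*}
which is the claim. One should note that $\alpha\mu_1 + (1-\alpha)\mu_2 \in \mathcal{M}^+(\mathrm{cvx}(\mathcal{S}),b)$ by Theorem~\ref{thm:feasibleregion}(a), so the left-hand side is well-defined.

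There is essentially no hard obstacle here; the proof is a one-line observation (convex function composed with affine map, then averaged) dressed up with Fubini-type bookkeeping. The only points requiring a modicum of care are measurability and finiteness: the integrand $\exp\{-\mu(\bar{B}(x,t))\} \in [0,1]$ is bounded, and $\int_0^\infty \mathrm{d}\death(t) = \death(\infty) - \death(0) \leq 1$ since $\death$ maps into $[0,1]$, so every integral in sight is finite and the interchange of the convex-combination bound with the integrals is unproblematic. I would also remark that this argument uses nothing about the specific geometry of $\bar{B}(x,t)$ beyond the fact that it is a fixed measurable set for each $(x,t)$; the same reasoning shows convexity for the general $P$-means family alluded to in the introduction.
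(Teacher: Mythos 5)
Your proposal is correct and matches the paper's own proof: both rest on the observation that $\mu \mapsto \mu(\bar{B}(x,t))$ is linear, so convexity of $r \mapsto e^{-r}$ gives the pointwise inequality, which is then integrated against $\mathrm{d}\death(t)$ and $\eta(\mathrm{d}x)$. The extra remarks on boundedness and measurability are fine but not needed beyond what the paper already implicitly uses.
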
 \begin{proof}{Proof.} Since $e^{-x}$ is convex in $\mathbb{R}$, $\forall \alpha \in [0,1]$
\begin{align} 
    J(\alpha \mu_1 + (1-\alpha)\mu_2)  &= \int_{\mathcal{S}} \eta(\mbox{d}x) \int_0^\infty \exp\Big\{-\alpha \mu_1(\bar{B}(x,t)) - (1-\alpha)\mu_2(\bar{B}(x,t))\Big\} \, \mbox{d}\death(t) \nonumber\\
    & \leq \int_{\mathcal{S}} \eta(\mbox{d}x) \int_0^\infty \alpha \exp \{- \mu_1(\bar{B}(x,t))\} +  (1-\alpha)\exp \{- \mu_2(\bar{B}(x,t))\} \, \mbox{d}\death(t) \nonumber\\ & = \alpha J(\mu_1) + (1-\alpha) J(\mu_2) \nonumber
\end{align} proving the assertion of the theorem. \hfill \Halmos
\end{proof}  

Since we have shown through Theorem~\ref{thm:feasibleregion} that the feasible region $\mathcal{M}(\mbox{cvx}(\mathcal{S}),b)$ is compact, and through Theorem~\ref{thm:conv} that the objective function is convex, it seems intuitively clear that Problem~\eqref{restrictedopt} is well-defined in the sense of solution existence. The following theorem makes this precise.

\begin{theorem}[Solution Existence]\label{thm:existence} Suppose the set $\mathcal{S} \subset \mathbb{R}^2$ is compact. Then the infimum associated with Problem~\eqref{restrictedopt} is attained, that is there exists $\mu^* \in \mathcal{M}^+(\rm{cvx}(\mathcal{S}),b)$ such that $$J(\mu^*) = \inf_{\mu \in \mathcal{M}^+(\rm{cvx}(\mathcal{S}),b)} J(\mu).$$

\end{theorem}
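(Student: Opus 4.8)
The plan is to apply the direct method of the calculus of variations, combining the weak$^*$ (sequential) compactness of the feasible set from Theorem~\ref{thm:feasibleregion}(b) with a weak$^*$ lower semicontinuity property of $J$. First I would observe that $J$ is trivially bounded: since $0 \le \exp\{-\mu(\bar B(x,t))\} \le 1$ and both $\eta$ and the Lebesgue--Stieltjes measure $\mathrm{d}\beta$ are finite, $J^* := \inf\{J(\mu) : \mu \in \mathcal{M}^+(\mathrm{cvx}(\mathcal{S}),b)\}$ is a finite nonnegative number. Take a minimizing sequence $\{\mu_n\} \subset \mathcal{M}^+(\mathrm{cvx}(\mathcal{S}),b)$ with $J(\mu_n) \to J^*$. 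By Theorem~\ref{thm:feasibleregion}(b) there is a subsequence $\{\mu_{n_k}\}$ converging in the weak$^*$ topology to some $\mu^* \in \mathcal{M}^+(\mathrm{cvx}(\mathcal{S}),b)$, and it remains only to show $J(\mu^*) \le \liminf_k J(\mu_{n_k})$.

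For the lower semicontinuity step, fix $x \in \mathcal{S}$ and $t \ge 0$. The set $\bar B(x,t) = B(x,t) \cap \mathcal{S}$ is closed (a closed ball intersected with the compact set $\mathcal{S}$), hence closed in $\mathrm{cvx}(\mathcal{S})$. Approximating $\mathbb{1}_{\bar B(x,t)}$ from above by the continuous functions $f_j(y) := \max\{0,\,1 - j\,\mathrm{dist}(y,\bar B(x,t))\}$, which decrease pointwise to the indicator, and using weak$^*$ convergence followed by dominated convergence in $j$ (a Portmanteau-type argument), I get $\limsup_k \mu_{n_k}(\bar B(x,t)) \le \mu^*(\bar B(x,t))$. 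Since $s \mapsto e^{-s}$ is decreasing and continuous, this gives, for every $(x,t)$,
\[
\liminf_k \exp\{-\mu_{n_k}(\bar B(x,t))\} \ge \exp\{-\mu^*(\bar B(x,t))\}.
\]
Applying Fatou's lemma on $\mathcal{S} \times [0,\infty)$ with the finite measure $\eta(\mathrm{d}x)\,\mathrm{d}\beta(t)$ then yields
\[
\liminf_k J(\mu_{n_k}) \ \ge\ \int_{\mathcal{S}} \eta(\mathrm{d}x)\int_0^\infty \liminf_k \exp\{-\mu_{n_k}(\bar B(x,t))\}\,\mathrm{d}\beta(t) \ \ge\ J(\mu^*).
\]
Chaining the inequalities, $J^* = \lim_k J(\mu_{n_k}) = \liminf_k J(\mu_{n_k}) \ge J(\mu^*) \ge J^*$, so $J(\mu^*) = J^*$ and the infimum in~\eqref{restrictedopt} is attained at $\mu^* \in \mathcal{M}^+(\mathrm{cvx}(\mathcal{S}),b)$.

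I expect the only real obstacle to be the weak$^*$ lower semicontinuity, and specifically the fact that $\mu \mapsto \mu(\bar B(x,t))$ is merely upper semicontinuous — not continuous — under weak$^*$ convergence, since mass can concentrate on the sphere $\partial B(x,t)$ in the limit. The monotonicity of $e^{-s}$ is what converts this one-sided estimate into the direction we need, and Fatou's lemma (valid since the integrands are nonnegative and $\eta \otimes \beta$ is finite) transfers the pointwise bound to the integral; the only routine verification is the joint measurability of $(x,t) \mapsto \exp\{-\mu_{n_k}(\bar B(x,t))\}$, which is already implicit in the well-posedness of $J$ in~\eqref{objexp}. Note that convexity of $J$ (Theorem~\ref{thm:conv}) plays no role in existence itself — weak$^*$ compactness plus lower semicontinuity suffice — although it is essential for the optimization machinery developed subsequently.
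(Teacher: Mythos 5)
Your proof is correct, and it shares the paper's overall skeleton (direct method: minimizing sequence, weak$^*$ sequential compactness from Theorem~\ref{thm:feasibleregion}(b), then lower semicontinuity), but the key step is handled genuinely differently. The paper disposes of lower semicontinuity in one line by asserting that convexity of $J$ (Theorem~\ref{thm:conv}) implies weak lower semicontinuity; that appeal is terse, since convexity alone does not yield lower semicontinuity in a weak or weak$^*$ topology without also knowing (norm) lower semicontinuity, a point the paper does not spell out. You instead prove weak$^*$ lower semicontinuity of $J$ from first principles: $\bar B(x,t)$ is closed, so a Portmanteau-type approximation of its indicator from above by continuous functions gives $\limsup_k \mu_{n_k}(\bar B(x,t)) \le \mu^*(\bar B(x,t))$, the monotonicity and continuity of $s \mapsto e^{-s}$ turn this upper estimate into the needed pointwise $\liminf$ bound, and Fatou's lemma (valid since the integrands are nonnegative and $\eta \otimes \mathrm{d}\beta$ is finite) passes it to $J$. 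This buys a self-contained and arguably more rigorous argument that exploits the specific void-probability structure of $J$, at the cost of a somewhat longer verification; it also correctly isolates that convexity plays no role in existence, whereas the paper's shorter route leans on convexity precisely to shortcut the semicontinuity argument. Your side remarks (upper, not full, semicontinuity of $\mu \mapsto \mu(\bar B(x,t))$ due to possible mass concentration on the sphere, and the routine joint measurability of the integrand) are exactly the right caveats.
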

\begin{proof}{Proof.} 

Let \( J^* := \inf J(\mu) \). By definition, there exists a sequence of measures \( \{\mu_n\}_{n \geq 1} \subset \mathcal{M}^+(\mathrm{cvx}(\mathcal{S}),b) \) such that $\lim_{n\to\infty}J(\mu_n) = J^*.$ According to the compactness of \( \mathcal{M}^+(\mathrm{cvx}(\mathcal{S}),b) \) by Theorem~\ref{thm:feasibleregion}, we can extract a convergent subsequence \( \{\mu_{n_k}\}_{k \geq 1} \) with $
\mu_{n_k} \to \tilde{\mu}$  in the weak topology, and $\tilde{\mu} \in \mathcal{M}^+(\mathrm{cvx}(\mathcal{S}),b)$.
We know \( J \) is convex by Theorem 3, implying that \( J \) is lower semicontinuous in the weak topology. Therefore, we obtain $J^* = \liminf_{k \to \infty} J(\mu_{n_k}) \geq J(\tilde{\mu}).$
By the definition of the infimum, we also have $J^* \leq J(\tilde{\mu})$, which implies $J(\tilde{\mu}) = J^*.$
Since \( \tilde{\mu} \in \mathcal{M}^+(\mathrm{cvx}(\mathcal{S}),b) \), this proves the assertion.
\hfill \Halmos
\end{proof}


As alluded to in Section~\ref{sec:fw_methods}, the existence of the von Mises derivative is central to implementation, since it affords the ability to perform a ``particle update'' of the type seen in Step 3 of Algorithm~\ref{alg:fcFW}. Does the von Mises derivative (of the objective) exist in the context of~\eqref{restrictedopt}? The following theorem provides an answer in the affirmative. Importantly, it provides expressions for the von Mises derivative and the influence function in a form that is used in subsequent sections for implementation. 

\begin{theorem}[Influence Function and von Mises Derivative]\label{thm:vonMises} The influence function $h_{\mu}: \mathrm{cvx}(\mathcal{S}) \to \mathbb{R}$ is given by \begin{equation}\label{eq:influence}
    h_{\mu}(x) = \int_{\mathcal{S}} \eta(\mathop{dy}) \int_0^{\infty} \left(\mu\left(\bar{B}(y,t) \right) - b\,\mathbb{I}_{[\|y-x\|,\infty)}(t)\right) \exp\left\{-\mu\left(\bar{B}(y,t) \right) \right\} \, \mathop{d\death(t)}.
\end{equation}
Furthermore, \begin{equation}\label{vonMisesexp} \lim_{t \to 0^+} \frac{1}{t} \left( J(\mu + t(\nu - \mu)) - J(\mu) \right) = \int_{\mathcal{S}} \eta(\mathop{dy}) \int_0^{\infty} \exp\left\{-\mu\left(\bar{B}(y,t) \right) \right\} \left( \mu - \nu\right)\left(\bar{B}(y,t) \right) \, \mathop{d\death(t)},\end{equation} implying that $$J'_{\mu}(\nu-\mu) = \lim_{t \to 0^+} \frac{1}{t} \left( J(\mu + t(\nu - \mu)) - J(\mu) \right)  = \frac{1}{b}\mathbb{E}_{x \sim \nu}\left[ h_{\mu}(x)\right].$$ 
\end{theorem}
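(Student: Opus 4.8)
The plan is to compute the Gateaux-type limit in \eqref{vonMisesexp} directly by differentiating under the two integral signs, and then to read off both the influence function formula \eqref{eq:influence} and the final identity $J'_\mu(\nu-\mu) = \tfrac1b\mathbb{E}_{x\sim\nu}[h_\mu(x)]$ as immediate consequences. First I would fix $x\in\mathcal{S}$ and $t>0$, set $\mu_s := \mu + s(\nu-\mu)$ for $s\in[0,1]$, and observe that $\mu_s(\bar B(y,t)) = \mu(\bar B(y,t)) + s(\nu-\mu)(\bar B(y,t))$ is affine in $s$. Hence the integrand $g(s) := \exp\{-\mu_s(\bar B(y,t))\}$ is differentiable in $s$ with $g'(s) = -(\nu-\mu)(\bar B(y,t))\exp\{-\mu_s(\bar B(y,t))\}$, so that
\[
\frac{1}{t}\bigl(g(t)-g(0)\bigr) \xrightarrow[t\to 0^+]{} g'(0) = \exp\{-\mu(\bar B(y,t))\}\,(\mu-\nu)(\bar B(y,t)).
\]
(Here one must be careful that the symbol $t$ is overloaded: it is both the radius variable and the step size; in the writeup I would rename the step size, say to $s$, to avoid the clash, then differentiate the inner/outer integral in $s$.)

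Second, I would justify interchanging the limit $s\to 0^+$ with $\int_{\mathcal{S}}\eta(dy)\int_0^\infty (\cdot)\,d\beta(t)$. The clean way is the mean value theorem / dominated convergence: for each $s\in(0,1]$ the difference quotient equals $-(\nu-\mu)(\bar B(y,t))\exp\{-\mu_{\xi}(\bar B(y,t))\}$ for some $\xi=\xi(s,y,t)\in(0,s)$, and this is bounded in absolute value by $(\nu-\mu)(\bar B(y,t))| \le \|\nu-\mu\| \le 2b$ uniformly, since $\exp\{-\mu_\xi(\cdot)\}\le 1$ and $\nu,\mu$ are finite measures. The dominating function $2b$ is integrable against the finite product measure $\eta(dy)\,d\beta(t)$ because $\beta$ is a probability-type function with $\beta(\infty)-\beta(0)\le 1$ and $\eta\in\mathcal{P}(\mathcal{S})$ (this is exactly the reformulation in \eqref{objexp}, where $\int_0^\infty(\cdot)\,d\beta(t)$ has total mass at most $1$). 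Dominated convergence then yields \eqref{vonMisesexp}.

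Third, with \eqref{vonMisesexp} in hand, the influence function formula \eqref{eq:influence} follows by specializing $\nu = b\delta_x$: then $(b\delta_x)(\bar B(y,t)) = b\,\mathbb{I}_{\bar B(y,t)}(x) = b\,\mathbb{I}_{[\|y-x\|,\infty)}(t)$ whenever $x\in\mathcal{S}$ (using that $\bar B(y,t)=B(y,t)\cap\mathcal{S}$ contains $x$ iff $\|y-x\|\le t$), and substituting $\mu-\nu = \mu - b\delta_x$ into \eqref{vonMisesexp} gives exactly \eqref{eq:influence}. For the last identity, note \eqref{vonMisesexp} exhibits $J'_\mu(\nu-\mu)$ in the linear form \eqref{vonlinear}: rewriting $\exp\{-\mu(\bar B(y,t))\}(\mu-\nu)(\bar B(y,t))$ and using Fubini to swap the $\nu$-integral to the outside, one gets $J'_\mu(\nu-\mu) = \int \phi_\mu(x)\,d(\nu-\mu)(x)$ with $\phi_\mu(x) = -\int_{\mathcal{S}}\eta(dy)\int_0^\infty \mathbb{I}_{[\|y-x\|,\infty)}(t)\exp\{-\mu(\bar B(y,t))\}\,d\beta(t)$; comparing with \eqref{eq:influence} shows $h_\mu(x) = b\phi_\mu(x) + \text{const}$, and since $\int d(\nu-\mu) = b-b = 0$ the constant drops out, giving $J'_\mu(\nu-\mu) = \frac1b\int h_\mu(x)\,d\nu(x) = \frac1b\mathbb{E}_{x\sim\nu}[h_\mu(x)]$.

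The main obstacle is the dominated-convergence justification for differentiating under the integral sign, and in particular being careful about the overloaded variable $t$ and about the fact that $\bar B(y,t)$ depends on the radius $t$ while the affine structure in the step size $s$ is what drives the differentiation; everything else is bookkeeping. A secondary point worth stating explicitly is why $x\mapsto h_\mu(x)$ is well-defined and finite on $\mathrm{cvx}(\mathcal{S})$ — this again uses $|h_\mu(x)|\le 2b\int_{\mathcal{S}}\eta(dy)\int_0^\infty d\beta(t) \le 2b$, so no integrability pathology arises.
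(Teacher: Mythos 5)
Your proposal follows essentially the same route as the paper's proof: the paper obtains \eqref{vonMisesexp} by factoring $\exp\{-\mu_s(\bar{B}(y,t))\}=\exp\{-\mu(\bar{B}(y,t))\}\exp\{s(\mu-\nu)(\bar{B}(y,t))\}$ and invoking dominated convergence, which is the same interchange you justify via the mean-value bound $|(\nu-\mu)(\bar{B}(y,t))|\le \|\nu-\mu\|\le 2b$, and the specialization $\nu=b\,\delta_x$ and the identification of the linear form are likewise identical; if anything you are more explicit than the paper about the Fubini step and about the caveat that $(b\delta_x)(\bar{B}(y,t))=b\,\mathbb{I}_{[\|y-x\|,\infty)}(t)$ holds literally for $x\in\mathcal{S}$, an imprecision the paper's own proof shares. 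One finishing detail is stated too quickly: dropping the constant in $h_\mu=b\phi_\mu+\mathrm{const}$ yields $J'_\mu(\nu-\mu)=\tfrac{1}{b}\int h_\mu\,\mathrm{d}(\nu-\mu)$, not yet $\tfrac{1}{b}\mathbb{E}_{x\sim\nu}[h_\mu(x)]$; to replace $\mathrm{d}(\nu-\mu)$ by $\mathrm{d}\nu$ you also need $\int h_\mu\,\mathrm{d}\mu=0$, which follows from one more Fubini application, $\int \mathbb{I}_{[\|y-x\|,\infty)}(t)\,\mu(\mathrm{d}x)=\mu(\bar{B}(y,t))$ (under the same convention you already invoke for the Dirac specialization), so that the two terms of \eqref{eq:influence} cancel when integrated against $\mu$. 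This is a one-line addition rather than a conceptual gap—the paper's proof itself stops at the linear-form identity without spelling out the $\mathbb{E}_{x\sim\nu}$ expression—so in substance and in approach your argument matches the paper's.
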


\begin{proof}{Proof.}
From the expression in~\eqref{objexp}, we have \begin{equation}\label{objexp2}
    J(\mu) = \int_{\mathcal{S}} \eta(\mbox{d}y) \int_0^\infty \exp\Big\{-\mu(\bar{B}(y,t))\Big\} \, \mathop{d\death(t)}.
\end{equation}

Now notice that
\begin{align}\label{eq:vonmises}
    \lefteqn{\lim_{s \to 0^+} \frac{J(\mu + s(\nu - \mu)) - J(\mu)}{s}} \nonumber \\
    & = \lim_{s \to 0^+} \int_{\mathcal{S}} \eta(\mbox{d}y) \int_0^\infty \exp \Big\{-\mu(\bar{B}(y,t))\Big\} \frac{\exp\Big\{ s(\mu - \nu)(\bar{B}(y,t))\Big\}-1}{s} \, \mathop{d\death(t)} \nonumber \\ 
& = \int_{\mathcal{S}} \eta(\mbox{d}y) \int_0^\infty \exp\Big\{-\mu(\bar{B}(y,t))\Big\} (\mu - \nu)(\bar{B}(y,t)) \, \mathop{d\death(t)},\end{align} where the last equality follows from Lebesgue's dominated convergence theorem. 

We see by setting $\nu = b\,\delta_x$ in~\eqref{eq:vonmises} that
\begin{align} \label{infproof}
h_{\mu}(x) &= \lim_{s \to 0^+} \frac{1}{s} (J(\mu + s(b\,\delta_x - \mu)) - J(\mu)) \nonumber \\
&=  \int_{\mathcal{S}} \eta(\mathop{dy}) \int_0^{\infty} \left(\mu\left(\bar{B}(y,t) \right) - b\,\mathbb{I}_{[\|y-x\|,\infty)}(t)\right) \exp\left\{-\mu\left(\bar{B}(y,t) \right) \right\} \, \mathop{d\death(t)}.
\end{align}
Furthermore, notice from~\eqref{infproof} that
\begin{align} \int_{x \in \mathbb{R}^2} \frac{1}{b}h_{\mu}(x) \, (\nu-\mu)(\mbox{d}x) =  \int_{\mathcal{S}} \eta(\mbox{d}y) \int_0^\infty \exp\Big\{-\mu(\bar{B}(y,t))\Big\} (\mu - \nu)(\bar{B}(y,t)) \, \mathop{d\death(t)},
\end{align}
which is the right-hand side of~\eqref{eq:vonmises}, implying the existence of the von Mises derivative $J'_{\mu}(\nu-\mu)$. 

\hfill \Halmos
\end{proof}
Having established the differentiability properties of \( J \), we now adapt two key results from~\cite{yu2024det} for the context of solving~\eqref{restrictedopt}. The first provides necessary and sufficient conditions for optimality in terms of the influence function \( h_{\mu}(x) \), and the second describes the structure of the support of optimal measures.
\begin{theorem}[Conditions for Optimality~\cite{yu2024det}]\label{thm:optCondition} The measure $\mu^*$ is optimal for problem~\eqref{restrictedopt}, that is, $J(\nu) \geq J(\mu^*)$ $\forall \nu \in \mathcal{M}^+(\mathrm{cvx}(\mathcal{S}),b)$ if and only if $h_{\mu^*}(x) \geq 0$ for all $x \in \mathrm{cvx}(\mathcal{S})$, where the influence function $h_{\mu^*}$ at $\mu^*$ is given by Theorem~\ref{thm:vonMises}.
\end{theorem}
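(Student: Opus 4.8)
The plan is to establish both directions of the equivalence from convexity of $J$ (Theorem~\ref{thm:conv}) together with the integral representation of the von Mises derivative supplied by Theorem~\ref{thm:vonMises}. The one fact I will use repeatedly is that for any $\nu \in \mathcal{M}^+(\mathrm{cvx}(\mathcal{S}),b)$ one has $J'_{\mu}(\nu - \mu) = \tfrac{1}{b}\int_{\mathrm{cvx}(\mathcal{S})} h_{\mu}(x)\,\nu(\mathrm{d}x)$; this follows by writing $\nu$ as the mixture $\int b\delta_x\,\tfrac{\nu(\mathrm{d}x)}{b}$ of scaled Diracs, noting $\nu - \mu = \int (b\delta_x - \mu)\,\tfrac{\nu(\mathrm{d}x)}{b}$ because $\nu/b$ is a probability measure, and invoking the linearity of $J'_{\mu}(\cdot)$ together with the identity $J'_{\mu}(b\delta_x - \mu) = h_{\mu}(x)$ from Definition~\ref{def:vonmises} and Theorem~\ref{thm:vonMises}.

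For the ``if'' direction, suppose $h_{\mu^*}(x) \geq 0$ for all $x \in \mathrm{cvx}(\mathcal{S})$. I would first record the standard supporting-hyperplane inequality for convex functionals: for any feasible $\nu$ and any $t \in (0,1)$, convexity of $J$ gives $J(\mu^* + t(\nu - \mu^*)) \leq (1-t)J(\mu^*) + tJ(\nu)$, so the difference quotient $t^{-1}\big(J(\mu^* + t(\nu - \mu^*)) - J(\mu^*)\big)$ is bounded above by $J(\nu) - J(\mu^*)$ for every such $t$; letting $t \to 0^+$ yields $J'_{\mu^*}(\nu - \mu^*) \leq J(\nu) - J(\mu^*)$. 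Combining this with the integral representation above and the hypothesis gives $J(\nu) - J(\mu^*) \geq \tfrac{1}{b}\int h_{\mu^*}\,\mathrm{d}\nu \geq 0$, since $h_{\mu^*} \geq 0$ and $\nu$ is a non-negative measure; hence $\mu^*$ is optimal.

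For the ``only if'' direction, I would argue by contraposition. If $h_{\mu^*}(x_0) < 0$ for some $x_0 \in \mathrm{cvx}(\mathcal{S})$, consider the feasible perturbation $\nu = b\delta_{x_0} \in \mathcal{M}^+(\mathrm{cvx}(\mathcal{S}),b)$; along the feasible segment $\mu^*_t := (1-t)\mu^* + t\,b\delta_{x_0} \in \mathcal{M}^+(\mathrm{cvx}(\mathcal{S}),b)$, $t \in [0,1]$ (feasibility by convexity of the feasible set, Theorem~\ref{thm:feasibleregion}), the definition of the influence function gives $\lim_{t \to 0^+} t^{-1}(J(\mu^*_t) - J(\mu^*)) = h_{\mu^*}(x_0) < 0$. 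Hence $J(\mu^*_t) < J(\mu^*)$ for all sufficiently small $t > 0$, so $\mu^*$ is not optimal.

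I do not expect a substantive obstacle: this is the measure-space counterpart of the first-order optimality condition for convex minimization, and indeed the corresponding statement appears as a result in~\cite{yu2024det}. The only points demanding care are the justification that $J'_{\mu^*}(\cdot)$ exists, is linear, and has the claimed integral form --- which is exactly the content of Theorem~\ref{thm:vonMises} and Definition~\ref{def:vonmises} --- and the verification that the perturbed measures stay in $\mathcal{M}^+(\mathrm{cvx}(\mathcal{S}),b)$, which is immediate because the feasible set is convex and contains $b\delta_x$ for every $x \in \mathrm{cvx}(\mathcal{S})$.
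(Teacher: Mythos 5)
Your argument is correct, and it is worth noting that the paper itself does not prove this statement: it imports it from~\cite{yu2024det}, so there is no in-paper proof to compare against. What you give is the standard first-order optimality argument for a convex functional, assembled from exactly the ingredients the paper does supply: the ``if'' direction from convexity (Theorem~\ref{thm:conv}) via the gradient inequality $J'_{\mu^*}(\nu-\mu^*) \leq J(\nu)-J(\mu^*)$ together with nonnegativity of $h_{\mu^*}$ against the nonnegative measure $\nu$, and the ``only if'' direction by descending along the feasible segment toward $b\delta_{x_0}$ when $h_{\mu^*}(x_0)<0$. One small simplification: the identity $J'_{\mu}(\nu-\mu)=\tfrac{1}{b}\int h_{\mu}\,\mathrm{d}\nu$ that you re-derive through the mixture $\nu-\mu=\int (b\delta_x-\mu)\,\nu(\mathrm{d}x)/b$ is already asserted in Theorem~\ref{thm:vonMises} (with $h_\mu(x)=b\phi_\mu(x)-\int\phi_\mu\,\mathrm{d}\mu$, so the constant integrates out precisely because $\nu$ has total mass $b$), so you could simply cite it; your derivation is a correct verification of that fact rather than a new ingredient. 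No gaps.
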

\begin{lemma}[Support of Optimal Measure~\cite{yu2024det}]\label{lem:supportopt} If the measure $\mu^*$ is optimal for problem~\eqref{restrictedopt}, then the support of $\mu^*$ satisfies \begin{equation}\label{suppinfl}
    \mathrm{supp}(\mu^*) \subseteq \left\{ x \in \mathrm{cvx}(\mathcal{S})\,:\,h_{\mu^*}(x)=0 \right\} \quad \mu^* \mbox{ a.s.\ }
\end{equation} Moreover, if $h_{\mu^*}$ is differentiable, we have  \begin{equation}\label{suppdinfl}
    \mathrm{supp}(\mu^*) \subseteq \left\{ x \in \mathrm{cvx}(\mathcal{S})\,:\,\nabla h_{\mu^*}(x)=0 \right\} \quad \mu^* \mbox{ a.s.\ }
\end{equation}
\end{lemma}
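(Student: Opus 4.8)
The plan is to obtain~\eqref{suppinfl} by pairing the first-order optimality condition of Theorem~\ref{thm:optCondition} --- which gives $h_{\mu^*}(x)\ge 0$ for every $x\in\mathrm{cvx}(\mathcal{S})$ --- with a ``centering'' identity stating that the influence function integrates to zero against the very measure at which it is evaluated:
$$\int_{\mathrm{cvx}(\mathcal{S})} h_{\mu}(x)\,\mu(\mathrm{d}x) = 0, \qquad \mu\in\mathcal{M}^+(\mathrm{cvx}(\mathcal{S}),b).$$
Granting these two facts, the conclusion is immediate: $h_{\mu^*}$ is a nonnegative function on $\mathrm{cvx}(\mathcal{S})$ whose integral against the nonnegative measure $\mu^*$ vanishes, so $h_{\mu^*}=0$ holds $\mu^*$-almost everywhere, i.e.\ $\mu^*\bigl(\{x\in\mathrm{cvx}(\mathcal{S}):h_{\mu^*}(x)=0\}\bigr)=b$, which is~\eqref{suppinfl}. (If one also records that $h_{\mu^*}$ is continuous --- which follows from the closed form~\eqref{eq:influence} together with continuity of $\death$ and dominated convergence --- then $\{h_{\mu^*}=0\}$ is closed and the inclusion $\mathrm{supp}(\mu^*)\subseteq\{h_{\mu^*}=0\}$ holds literally, not merely $\mu^*$-a.s.)

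To establish the centering identity I would invoke Theorem~\ref{thm:vonMises}: its proof records the linear representation $J'_{\mu}(\nu-\mu)=\tfrac1b\int h_{\mu}\,\mathrm{d}(\nu-\mu)$, while Definition~\ref{def:vonmises} gives $h_{\mu}(x)=J'_{\mu}(b\delta_x-\mu)$. Taking $\nu=b\delta_x$ in the representation yields $h_{\mu}(x)=h_{\mu}(x)-\tfrac1b\int h_{\mu}\,\mathrm{d}\mu$, so the integral vanishes. Equivalently, one can obtain this --- or at least the inequality ``$\le 0$'', which together with $h_{\mu^*}\ge 0$ is all that~\eqref{suppinfl} needs --- by a direct Fubini interchange in~\eqref{eq:influence}, integrating the indicator $\mathbb{I}_{[\|y-x\|,\infty)}(t)$ against $\mu(\mathrm{d}x)$.

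For the refinement~\eqref{suppdinfl}, I would first upgrade the optimality inequality from $\mathrm{cvx}(\mathcal{S})$ to all of $\mathbb{R}^2$. Writing $h_{\mu}(x)=c_{\mu}-b\int_{\mathcal{S}}\eta(\mathrm{d}y)\int_{\|y-x\|}^{\infty}e^{-\mu(\bar{B}(y,t))}\,\mathrm{d}\death(t)$ with $c_{\mu}$ independent of $x$, and using that projection onto the convex set $\mathrm{cvx}(\mathcal{S})$ does not increase $\|y-x\|$ for any $y\in\mathcal{S}$, one gets $h_{\mu^*}(x)\ge h_{\mu^*}\bigl(\Pi_{\mathrm{cvx}(\mathcal{S})}(x)\bigr)\ge 0$ for every $x\in\mathbb{R}^2$ (this is the pointwise counterpart of Theorem~\ref{thm:restrict}). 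Hence, by the first step, $\mu^*$-almost every $x\in\mathrm{supp}(\mu^*)$ attains the value $0=\min_{\mathbb{R}^2}h_{\mu^*}$, i.e.\ is an unconstrained global minimizer of $h_{\mu^*}$; if $h_{\mu^*}$ is differentiable, the usual first-order necessary condition then gives $\nabla h_{\mu^*}(x)=0$ there, which is~\eqref{suppdinfl}.

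I expect the centering identity to be the only genuine obstacle. It is the single place where the exact shape of~\eqref{eq:influence} and the linearity asserted by Theorem~\ref{thm:vonMises} must be used carefully; the one subtlety worth attention is the interplay between the truncated ball $\bar{B}(y,t)=B(y,t)\cap\mathcal{S}$ appearing in the objective and the (possibly larger) domain $\mathrm{cvx}(\mathcal{S})$ over which $\mu$ ranges, which matters when carrying out the Fubini interchange but is inconsequential for measures supported on $\mathcal{S}$. Everything downstream is an elementary ``nonnegative integrand with vanishing integral'' deduction followed by a first-order stationarity argument.
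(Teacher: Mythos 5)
The paper itself never proves this lemma---it is imported verbatim from~\cite{yu2024det} right after Theorem~\ref{thm:optCondition}---so there is no in-paper proof to compare against; judged on its own, your argument is correct and self-contained. For~\eqref{suppinfl} your route is the natural one: Theorem~\ref{thm:optCondition} gives $h_{\mu^*}\ge 0$ on $\mathrm{cvx}(\mathcal{S})$, and integrating the closed form~\eqref{eq:influence} against $\mu^*$ (Fubini) gives $\int h_{\mu^*}\,\mathrm{d}\mu^*\le 0$, whence $h_{\mu^*}=0$ $\mu^*$-a.e.; you are right to flag that the mismatch between $\bar{B}(y,t)=B(y,t)\cap\mathcal{S}$ and the larger domain $\mathrm{cvx}(\mathcal{S})$ means the exact centering identity only holds when $\mu^*$ puts no mass on $\mathrm{cvx}(\mathcal{S})\setminus\mathcal{S}$, and that the inequality ``$\le 0$'' is all the argument needs (equivalently, the representation $J'_{\mu}(\nu-\mu)=\tfrac1b\,\mathbb{E}_{x\sim\nu}[h_{\mu}(x)]$ stated in Theorem~\ref{thm:vonMises}, applied with $\nu=\mu$, yields the centering identity directly, modulo the same caveat). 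Your remark that continuity of $h_{\mu^*}$ upgrades the $\mu^*$-a.s. conclusion to a literal support inclusion is a correct strengthening of the statement as written. For~\eqref{suppdinfl}, the projection step extending the bound to all of $\mathbb{R}^2$ via $h_{\mu^*}(x)\ge h_{\mu^*}\bigl(\Pi_{\mathrm{cvx}(\mathcal{S})}(x)\bigr)\ge 0$ is genuinely needed and correct: without it, support points on $\partial(\mathrm{cvx}(\mathcal{S}))$ would only satisfy a normal-cone condition rather than $\nabla h_{\mu^*}=0$. The one caveat worth recording is that the lemma's differentiability hypothesis must then be read as differentiability of the natural extension of $h_{\mu^*}$ (the same formula~\eqref{eq:influence} evaluated at $x\in\mathbb{R}^2$), which is harmless because a $\mu^*$-a.e. point is an unconstrained global minimizer of that extension.
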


\section{Two Simple Cases for Intuition}\label{sec:specialCases}
While Theorem~\ref{thm:existence} guarantees the existence of an optimal solution to problem~\eqref{restrictedopt}, the structure of optimal measures can be nontrivial even in seemingly simple cases. Consider the setting where out-of-hospital cardiac arrests (OHCA) occur at a finite number of fixed demand points, \(\mathcal{S} = \{y_1, y_2, \dots, y_n \}\), with the incident distribution given by
\begin{equation*}
    \eta = \sum_{i=1}^n \lambda_i \delta_{y_i}, \quad \sum_{i=1}^n \lambda_i = 1, \quad \lambda_1, \dots, \lambda_n \geq 0.
\end{equation*}
While the case \( n = 2 \) admits an analytical solution with the optimal measure supported on the demand points (see below), the structure becomes significantly more intricate for \( n = 3 \), where the optimal measure is no longer restricted to the demand points, that is, \( \mathrm{supp}(\mu^*) \not\subseteq \{y_1, \dots, y_n \} \) for $n \geq 3$.


\subsection{Two Demand Points \texorpdfstring{$\mathbf{(n=2)}$}{(n=2)}}\label{sec:casen=2}
We can derive the analytic solution to problem~\eqref{restrictedopt} for the case with two demand points $y_1, y_2$ ($n=2$). 
        
        
\begin{theorem}\label{thm:opt2dem}
    Suppose the probability measure of event arrivals is \(\eta = \lambda_1\,\delta_{y_1} + \lambda_2\,\delta_{y_2}\), with \(\lambda_1 + \lambda_2 = 1\) and \(\lambda_1, \, \lambda_2 > 0\). Then
\begin{equation}\label{opt2Dem}
    \mu^* = \alpha_1 \, \delta_{y_1} + \alpha_2 \, \delta_{y_2},
\end{equation}
where
\[
\alpha_1 =
\begin{cases}
0, & 0 \leq \frac{\lambda_1}{\lambda_2} < e^{-b}, \\
\frac{b}{2} + \frac{1}{2} \log \frac{\lambda_1}{\lambda_2}, & e^{-b} \leq \frac{\lambda_1}{\lambda_2} \leq e^b, \\
b, & \frac{\lambda_1}{\lambda_2} > e^b,
\end{cases}
\]
and \(\alpha_2 = b - \alpha_1\), is an optimal measure for problem~\eqref{restrictedopt}.
\end{theorem}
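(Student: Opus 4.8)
The plan is to exploit the finiteness of $\mathcal{S}$ to collapse Problem~\eqref{restrictedopt} to a one-variable convex minimization solvable in closed form. The key structural observation is that, since $\mathcal{S}=\{y_1,y_2\}$, for every $y_i\in\mathcal{S}$ and every $t\ge 0$ the set $\bar B(y_i,t)=B(y_i,t)\cap\mathcal{S}$ is a subset of $\{y_1,y_2\}$; hence the objective in~\eqref{restrictedopt} depends on a feasible $\mu\in\mathcal{M}^+(\mathrm{cvx}(\mathcal{S}),b)$ only through the two numbers $\mu(\{y_1\})$ and $\mu(\{y_2\})$. So my first step is a reduction: given any feasible $\mu$ with $a_i:=\mu(\{y_i\})$, the residual mass $b-a_1-a_2\ge 0$ sitting elsewhere on the segment $\mathrm{cvx}(\mathcal{S})$ can be relocated onto $y_1,y_2$ to form $\mu'=a_1'\delta_{y_1}+a_2'\delta_{y_2}$ with $a_i'\ge a_i$ and $a_1'+a_2'=b$; since $t\mapsto e^{-t}$ is decreasing, $J(\mu')\le J(\mu)$. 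Combined with the existence of an optimum (Theorem~\ref{thm:existence}), it suffices to minimize $J$ over the one-parameter family $\mu=\alpha_1\delta_{y_1}+(b-\alpha_1)\delta_{y_2}$, $\alpha_1\in[0,b]$.

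Next I would write $J$ explicitly on this family. With $d:=\|y_1-y_2\|$, one has $\mu(\bar B(y_1,t))=\alpha_1$ for $t<d$ and $=b$ for $t\ge d$, and symmetrically for $y_2$; substituting into~\eqref{restrictedopt} and using that $\death$ is continuous (so $t=d$ is $\mathrm{d}\death$-null) yields
\[
J(\mu)=c_0\bigl(\lambda_1 e^{-\alpha_1}+\lambda_2 e^{-(b-\alpha_1)}\bigr)+c_1 e^{-b},
\]
where $c_0:=\death(d)-\death(0)\ge 0$ and $c_1:=\int_{[d,\infty)}\mathrm{d}\death(t)\ge 0$, both independent of $\alpha_1$. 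If $c_0=0$ the objective is constant over the family and there is nothing more to prove, so assume $c_0>0$; then minimizing $J$ is equivalent to minimizing the strictly convex function $g(\alpha):=\lambda_1 e^{-\alpha}+\lambda_2 e^{-(b-\alpha)}$ on $[0,b]$. From $g'(\alpha)=-\lambda_1 e^{-\alpha}+\lambda_2 e^{\alpha-b}$, the unique stationary point is $\hat\alpha=\tfrac{b}{2}+\tfrac12\log(\lambda_1/\lambda_2)$, and strict convexity forces the minimizer over $[0,b]$ to equal $\hat\alpha$ when $\hat\alpha\in[0,b]$ (i.e.\ $e^{-b}\le\lambda_1/\lambda_2\le e^{b}$), to equal $0$ when $\hat\alpha<0$ (i.e.\ $\lambda_1/\lambda_2<e^{-b}$), and to equal $b$ when $\hat\alpha>b$ (i.e.\ $\lambda_1/\lambda_2>e^{b}$). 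This is exactly the stated $\alpha_1$, with $\alpha_2=b-\alpha_1$, completing the proof.

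I expect the only subtle step to be the reduction in the first paragraph: recognizing that a two-point $\mathcal{S}$ makes the objective blind to any mass a feasible measure places on $\mathrm{cvx}(\mathcal{S})\setminus\{y_1,y_2\}$, so that the genuinely infinite-dimensional feasible set behaves, for optimization purposes, like the scaled two-point simplex $\{\alpha_1\delta_{y_1}+(b-\alpha_1)\delta_{y_2}:\alpha_1\in[0,b]\}$; everything after that is elementary single-variable calculus. As a consistency check one can verify, via the optimality condition of Theorem~\ref{thm:optCondition}, that the resulting $\mu^*$ satisfies $h_{\mu^*}\ge 0$ on $\mathrm{cvx}(\mathcal{S})$ with equality on $\mathrm{supp}(\mu^*)$, matching Lemma~\ref{lem:supportopt}.
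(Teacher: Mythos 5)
Your closed-form minimization over the two-point family is fine, but the reduction that precedes it has a genuine gap. You justify restricting to measures supported on $\{y_1,y_2\}$ by reading $\bar B(y,t)=B(y,t)\cap\mathcal{S}$ literally, so that mass placed on $\mathrm{cvx}(\mathcal{S})\setminus\{y_1,y_2\}$ is invisible to $J$. That reading is inconsistent with the model the paper actually works with: the influence function of Theorem~\ref{thm:vonMises} contains the term $b\,\mathbb{I}_{[\|y-x\|,\infty)}(t)$ for \emph{every} $x\in\mathrm{cvx}(\mathcal{S})$, i.e.\ a point mass anywhere on the segment shortens the response time to both demand points, and the whole point of the $n=3$ analysis (mass at the centroid $o\notin\mathcal{S}$ strictly helps) is that off-demand-point mass is not wasted. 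Under that intended model your relocation step fails: moving mass from an interior point of the segment to $y_1$ improves coverage of $y_1$ but worsens coverage of $y_2$ (the relevant distance jumps from $d-s$ to $d$), so ``$e^{-t}$ is decreasing'' does not give $J(\mu')\le J(\mu)$. Whether an endpoint-supported measure is optimal is exactly the nontrivial content of the theorem, and it is where the standing assumption that $\death$ is strictly concave must enter; indeed for $n\ge 3$ the analogous reduction is false.

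The paper closes this gap differently: it takes the candidate $\mu^*$ as given and verifies global optimality via Theorem~\ref{thm:optCondition}, computing $h_{\mu^*}(x)$ for all $x$ on the segment and showing $h_{\mu^*}(x)\ge 0$ using concavity of $\death$ together with the collinearity identity $\|x-y_1\|+\|x-y_2\|=\|y_1-y_2\|$, which gives $\death(\|x-y_1\|)+\death(\|x-y_2\|)\ge \death(0)+\death(\|y_1-y_2\|)$. What you relegate to a ``consistency check'' at the end is in fact the required argument. Your proposal can be repaired by keeping your one-variable convex calculation (which nicely \emph{derives} the stated $\alpha_1$ rather than merely positing it, a genuine complement to the paper) as the step identifying the best measure within the family $\{\alpha_1\delta_{y_1}+(b-\alpha_1)\delta_{y_2}\}$, and then carrying out the $h_{\mu^*}\ge 0$ verification on all of $\mathrm{cvx}(\{y_1,y_2\})$ (including the boundary cases $\alpha_1\in\{0,b\}$) to certify optimality over the full feasible set $\mathcal{M}^+(\mathrm{cvx}(\mathcal{S}),b)$.
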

\begin{proof}{Proof.}
    With our specific choice of $\mu^*$, for \(i=1,2\),
    \begin{equation}\label{eq:ballmeasure2Dem}
        \mu^*\left(\bar{B}(y_i,t) \right) = 
        \begin{cases}
            \alpha_i, & 0 \leq t < \|y_1 - y_2\|, \\
            b, & t \geq \|y_1 - y_2\|.
        \end{cases}
    \end{equation}
    When \(e^{-b} \leq \frac{\lambda_1}{\lambda_2} \leq e^b\), for any \(x \in \mathrm{cvx}(\{y_1,\,y_2\})\), from Theorem~\ref{thm:vonMises}, the influence function at \(\mu^*\) is
    \begin{align*}
        h_{\mu^*}(x) &= \sum_{i=1}^2 \lambda_i \int_0^{\infty} 
        \left( \mu^*\left(\bar{B}(y_i,t) \right) - b \, \mathbb{I}_{[\|x-y_i \|,\infty)}(t) \right) 
        \exp\left\{-\mu^*\left(\bar{B}(y_i,t) \right) \right\} \, \mathop{d\death(t)} \\ 
        &= \sum_{i=1}^2 \lambda_i \left( 
        \alpha_i e^{-\alpha_i} \big[\death(\|y_1 - y_2 \|) - \death(0)\big] 
        - b e^{-\alpha_i} \big[\death(\|y_1 - y_2 \|) - \death(\|x-y_i \|)\big] 
        \right) \\  
        &= \sqrt{\lambda_1 \lambda_2} b e^{-\frac{b}{2}} \left( 
        \death(\|x-y_1 \|) + \death(\|x-y_2 \|) - \death(\|y_1-y_2 \|) - \death(0) 
        \right) \\ 
        &\geq 0.
    \end{align*}
    The second equality follows from substituting equation~\eqref{eq:ballmeasure2Dem} into the influence function. The final inequality arises from the concavity of \(\death\) and the fact that \(\|y_1 - y_2 \| = \|x - y_1 \| + \|x - y_2 \|\).
    Similarly, we can prove that \(h_{\mu^*}(x) \geq 0\) for the cases where \(\alpha_1 = 0\) or \(\alpha_1 = b\). According to the optimality condition in Theorem~\ref{thm:optCondition}, \(\mu^*\) is the optimal solution. 
    \hfill \Halmos
\end{proof}

Theorem~\ref{thm:opt2dem} provides the analytical solution to problem~\eqref{restrictedopt} for the case of two demand points. The optimal measure is supported on the demand points, that is, \(\mathrm{supp}(\mu^*) \subseteq \{y_1,\, y_2\}\). 

\subsection{Three Demand Points \texorpdfstring{$\mathbf{(n=3)}$}{(n=3)}}\label{sec:casen=3}
Observing the optimal measure for \(n=2\), a natural question arises: do optimal measures for problem~\eqref{restrictedopt} generally share the structure of being supported only on the demand points for general \(n > 2\)? We now show that the answer to this question is in the negative using an example with three demand points.

\begin{figure}[htbp]
    \centering
    \begin{tikzpicture}
        \coordinate (y1) at (0, 0); 
        \coordinate (y2) at (4, 0); 
        \coordinate (y3) at (2, {2*sqrt(3)}); 
        \coordinate (o) at (2, {2/sqrt(3)}); 

        \draw[thick] (y1) -- (y2) -- (y3) -- cycle;

        \filldraw[black] (y1) circle (2pt) node[below] {\(y_1\)};
        \filldraw[black] (y2) circle (2pt) node[below] {\(y_2\)};
        \filldraw[black] (y3) circle (2pt) node[above] {\(y_3\)};
        \filldraw[black] (o) circle (2pt) node[right] {\(o\)};
    \end{tikzpicture}
    \caption{Three symmetric demand points.}
    \label{fig:demandpts3}
\end{figure}
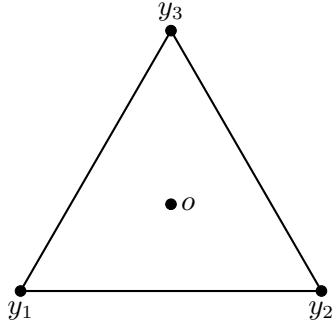

Consider a symmetric setup as in Figure~\ref{fig:3dempts} where the incident measure is:
\begin{equation*}
    \eta = \frac{1}{3} \delta_{y_1} + \frac{1}{3} \delta_{y_2} + \frac{1}{3} \delta_{y_3},
\end{equation*}
and the demand points are located at the vertices of an equilateral triangle: \(y_1 = (0,0)\), \(y_2 = (1,0)\), and \(y_3 = \left(1/2, \sqrt{3}/2\right)\). Due to the symmetry of the setup, if the support of the optimal measure is restricted to \(\{y_1, y_2, y_3\}\), the measure must take the form:
$\tilde{\mu} = \frac{b}{3} \delta_{y_1} + \frac{b}{3} \delta_{y_2} + \frac{b}{3} \delta_{y_3}.$ Given \(\|y_1 - y_2\| = \|y_1 - y_3\| = \|y_2 - y_3\| = 1\), the corresponding influence function at \(\tilde{\mu}\) is:
\begin{align}
    h_{\tilde{\mu}}(x) &= \frac{1}{3}\sum_{i=1}^3 \int_0^{\infty} 
    \left( \tilde{\mu}\left(\bar{B}(y_i,t) \right) - b \, \mathbb{I}_{[\|x-y_i \|,\infty)}(t) \right) 
    \exp\left\{-\tilde{\mu}\left(\bar{B}(y_i,t) \right) \right\} \, \mathrm{d}\death(t) \nonumber \\ 
    &= \frac{b}{3} e^{-\frac{b}{3}} \left( 
        \death(\|x-y_1 \|) + \death(\|x-y_2 \|) + \death(\|x-y_3 \|) - 2\death(1) - \death(0) \right).
\end{align} Using the probability of survival function from~\cite{waalewijn2001survival,van2024modeling}, $\death(t) = 1 - \left(1 + e^{0.679 + 0.262t}\right)^{-1},$ we compute the influence value at the centroid \(x = o = (1/2, 1/(2\sqrt 3))\) as $h_{\tilde{\mu}}(o) = \frac{1}{3} e^{-\frac{b}{3}} (-0.0129) < 0.$ According to the optimality condition in Theorem~\ref{thm:optCondition}, \(\tilde{\mu}\) is not an optimal solution. Therefore, the support of the optimal measure is not restricted to the demand points, that is, \(\mathrm{supp}(\mu^*) \not\subseteq \{y_1, y_2, y_3\}\). This indicates that the optimal measure \(\mu^*\) has a more complex structure for the three-demand-point case and, by extension, for general cases. In general, deriving analytical solutions is challenging, making numerical methods essential for accurately computing optimal solutions.

In the following section, we adapt the FW methods introduced in Section~\ref{sec:fw_methods} to solve problem~\eqref{restrictedopt}. Specifically, we apply fc-FW (Algorithm~\ref{alg:fcFW}) to the three-demand-point case and extend the approach to a similar four-demand-point case. The results presented in Figure~\ref{fig:3dempts} confirm the intricate structure of the optimal measure, with further discussion in Section~\ref{sec:3dempts&4dempts}.

\section{Numerical Illustration}\label{sec:implementation}

As hinted in Section~\ref{sec:specialCases}, the problem in~\eqref{restrictedopt} defies analytic solution even in simple cases, calling for an iterative algorithm such as fc-FW. Furthermore, and as we shall demonstrate in this section, the solution to~\eqref{restrictedopt} is often remarkably intricate but with patterns that are plausible post-hoc. In what follows, we begin with an illustration of the solution to discrete cases from Section~\ref{sec:specialCases} with \( n = 3 \) and \( n = 4 \). This is followed by a more realistic scenario where \( \eta \) follows a continuous distribution, better approximating real-world emergency incidents. Finally, we apply fc-FW to a full-scale emergency response problem in the city of Auckland, New Zealand, demonstrating its practical applicability in optimizing emergency resource allocation. In all experiments, we adopt the probability of death function \( \death(t) = 1 - (1 + e^{0.679 + 0.262t})^{-1} \), following the survival analysis in~\cite{de2003optimal}, which models the relationship between response time and survival rates.


\subsection{Implementing fc-FW}
In the emergency response setting, the fc-FW algorithm in Algorithm~\ref{alg:fcFW} is applied with $\mathcal{X} = \mathrm{cvx}(\mathcal{S})$. As established in Theorem~\ref{thm:vonMises}, the influence function \( h_{\mu}(x) \) is given by:
\begin{align}
    h_{\mu}(x) &= \int_{\mathcal{S}} \eta(\mbox{d}y) \int_0^{\infty} \left(\mu\left(\bar{B}(y,t) \right) - b \, \mathbb{I}_{[\|x-y \|,\infty)}(t)\right) \exp\left\{-\mu\left(\bar{B}(y,t) \right) \right\} \, \mathop{d\death(t)} \nonumber\\
    & =\int_{\mathcal{S}} \eta(\mbox{d}y) \int_0^{\|x-y \|}  b\,\exp\left\{-\mu\left(\bar{B}(y,t) \right) \right\} \, \mathop{d\death(t)} + c,
\end{align} where \( c \) is a constant independent of \( x \), ensuring that only the first term affects optimization. This function serves as the first-order object and plays a crucial role in the iterative updates of the fc-FW algorithm introduced in Section~\ref{sec:fw_methods}. To implement Algorithm~\ref{alg:fcFW} in practice, we need efficient methods to solve two key subproblems at each iteration. The first involves finding the next support point by solving
\begin{equation} \label{eq:subproblem1} x^*(\mu_k) = \argmin_{x\in \mathrm{cvx}(\mathcal{S})} h_{\mu_k}(x), \end{equation}
which identifies the most influential location to update the measure. The second subproblem updates the measure by solving
\begin{equation} \label{eq:subproblem2} \mu_{k+1} = \argmin_{\mu\in \mathrm{conv}(A_{k+1})} J(\mu), \end{equation}
redistributing mass among the selected points. As discussed in Section~\ref{sec:fw_methods}, the measure update step~\eqref{eq:subproblem2} is equivalent to a finite-dimensional convex optimization problem~\eqref{eq:fcEquivalent}. Standard convex optimization methods, such as projected gradient descent or projected stochastic gradient descent, can be used to efficiently compute \( \mu_{k+1} \). On the other hand, solving~\eqref{eq:subproblem1} is more challenging because \( h_{\mu}(x) \) is generally nonconvex, although our experience has been that emergency response problems involve influence functions that can be minimized efficiently even if nonconvex. During implementation, we use a gradient-based optimization algorithm such as Adam~\cite{kingma2017adam}, initialized from a random point uniformly sampled from $\mathrm{cvx}(\mathcal{S})$, to minimize \( h_{\mu}(x) \), allowing us to obtain high-quality solutions while maintaining computational efficiency.

  
   


\subsection{fc-FW on Three and Four Demand points.}\label{sec:3dempts&4dempts}

\begin{figure}[htb]
\FIGURE{
\parbox{\textwidth}{
\centering
    \subcaptionbox{Optimal measure $\hat{\mu}^*$ with color representing mass.\label{subfig:mu3demfcFW}}{
        \includegraphics[width=0.48\textwidth]{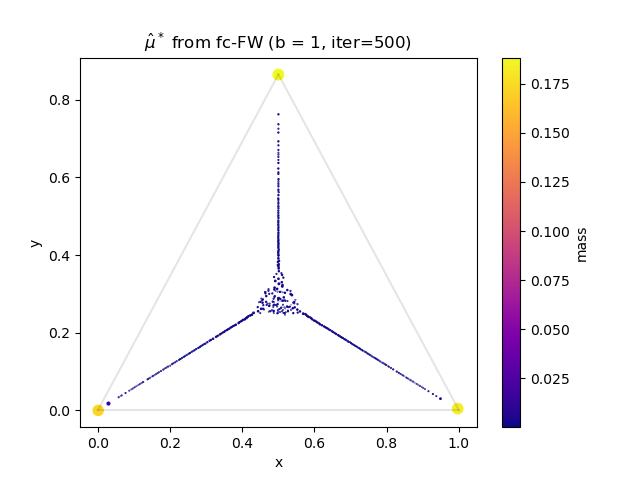}
    }
    \hfill
    \subcaptionbox{Contour plot of the influence function $h_{\hat{\mu}^*}(x)$.\label{subfig:influe3demfcFW}}{
        \includegraphics[width=0.48\textwidth]{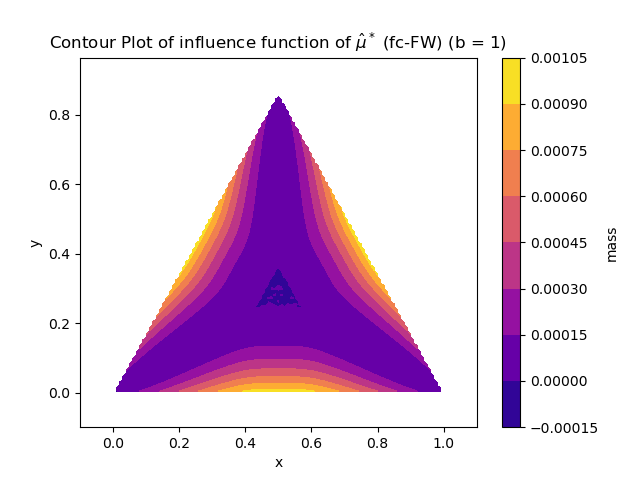}
    }
        \\[1ex]
    \subcaptionbox{Optimal measure $\hat{\mu}^*$ with color representing mass.\label{subfig:mu4demfcFW}}{
        \includegraphics[width=0.48\textwidth]{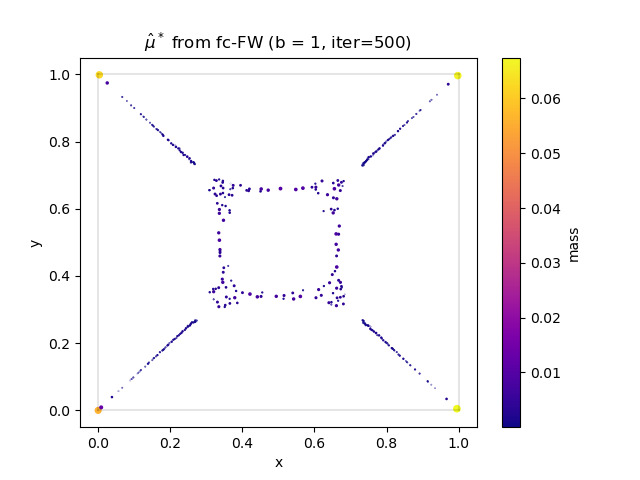}
    }
    \hfill
    \subcaptionbox{Contour plot of the influence function $h_{\hat{\mu}^*}(x)$.\label{subfig:influe4demfcFW}}{
        \includegraphics[width=0.48\textwidth]{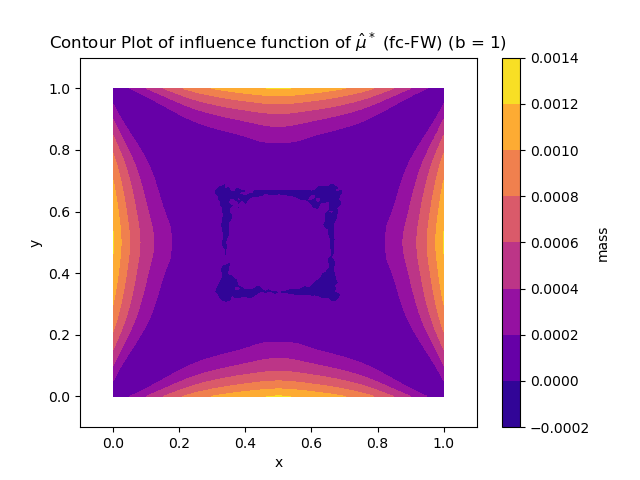}
    }
    }
}
{
Emergency Response with Three and Four Demanding Points
\label{fig:3dempts}}
{
(a)–(d) show the results of applying the fc-FW algorithm (Algorithm~\ref{alg:fcFW}) with total volunteer mass \( b = 1 \), after 500 iterations. (a) and (c) display the optimal volunteer measure \( \hat{\mu}^* \) for three and four demand points, respectively, with color indicating assigned mass. (b) and (d) show the corresponding contour plots of the influence function \( h_{\hat{\mu}^*}(x) \), which confirm approximate optimality by remaining nearly nonnegative throughout \( \mathrm{cvx}(\mathcal{S}) \).
}
\end{figure}

In Section~\ref{sec:specialCases}, we analyzed special cases where incidents follow a discrete distribution, \( \eta = \sum_{i=1}^{n} \lambda_i \delta_{y_i} \). For \( n = 2 \), the optimal measure follows intuition, with support concentrated at the demand points. However, for \( n = 3 \) and \( n = 4 \), the structure becomes more complex, and the optimal measure is no longer restricted to the demand points.

We first examine the three-demand-point case, following the setup in Section~\ref{sec:casen=3}, where the possible incidents are positioned at the vertices of an equilateral triangle:
$y_1 = (0,0)$, $y_2 = (1,0)$ and $y_3 = \left(\frac{1}{2}, \frac{\sqrt{3}}{2} \right)$. The total volunteer mass is set to \( b = 1 \). Using fc-FW (Algorithm~\ref{alg:fcFW}) for 500 iterations, we obtain the optimal volunteer measure \( \hat{\mu}^* \), shown in Figure~\ref{subfig:mu3demfcFW}. The results confirm a nontrivial solution with a complex structure, consistent with the discussion in Section~\ref{sec:casen=3}, where the optimal measure is not simply concentrated on the demand points. The corresponding contour plot of the influence function \( h_{\hat{\mu}^*}(x) \) in Figure~\ref{subfig:influe3demfcFW} verifies the optimality condition from Theorem~\ref{thm:optCondition}, which requires \( h_{\hat{\mu}^*}(x) \geq 0 \) for all \( x \in \mbox{cvx}(\mathcal{S}) \). The minimum value of \( h_{\hat{\mu}^*}(x) \) is above \(-0.00015\), which, given numerical approximations and optimization tolerances, supports the contention that the solution is approximately globally optimal.

Next, we extend this analysis to the four-demand-point case to confirm the emergence of nontrivial optimal solutions with complex structure. The demand points are arranged in a square formation:
\[
y_1 = (0,0), \quad y_2 = (1,0), \quad y_3 = (0,1), \quad y_4 = (1,1).
\]
As in the three-point case, we use fc-FW to compute the optimal measure, shown in Figure~\ref{subfig:mu4demfcFW}, along with its corresponding influence function in Figure~\ref{subfig:influe4demfcFW}. The results again show that the optimal measure is not simply concentrated at the demand points but is spread out in a more intricate pattern. The influence function approximately satisfies the optimality condition in Theorem~\ref{thm:optCondition}, confirming that the computed measure is approximately globally optimal. This reinforces the observation that the optimal allocation strategy does not necessarily align with intuitive placements, at least not ex ante, highlighting the need for efficient computational methods to determine precise solutions.


\subsection{fc-FW on Uniform and Mixture-of-Uniform Incident Distributions}

We now extend our analysis to continuous incident distributions, specifically considering two cases: a Uniform incident distribution, where incidents occur with equal probability across the unit square, $\eta = \text{Unif}([0,1] \times [0,1]),$
and a Mixture-of-Uniform incident distribution, where the unit square \([0,1] \times [0,1]\) is divided into four equal subsquares, each with a different probability weight: \begin{align*}\eta = \text{MixUnif}([0,1] \times [0,1]) :=& 0.1 \text{Unif}([0,0.5] \times [0,0.5]) + 0.2 \text{Unif}([0.5,1] \times [0,0.5]) \\ 
    &\quad + 0.3 \text{Unif}([0,0.5] \times [0.5,1]) + 0.4 \text{Unif}([0.5,1] \times [0.5,1]).\end{align*}
These distributions provide a more realistic representation of emergency incidents compared to discrete demand on few points. For instance, the model proposed in~\cite{van2024modeling} assumes that OHCA incidents follow a uniform distribution within each unit area, resulting in an overall mixture of uniform distributions across a city. While real-world applications involve larger and more complex spatial domains, our setup—focusing on a \( 1 \times 1 \) unit square—serves as a simplified yet insightful setting. Computing accurate numerical solutions using fc-FW allows us to analyze the fine-grained structure of optimal measures, offering insights into microscopic allocation patterns that would be difficult to derive analytically.

\begin{figure}[htb]
\FIGURE{
\parbox{\textwidth}{
\centering
    \subcaptionbox{Opt Volunteer Measure $\hat{\mu}^*$ ($b=1$).}{
        \includegraphics[width=0.3\textwidth]{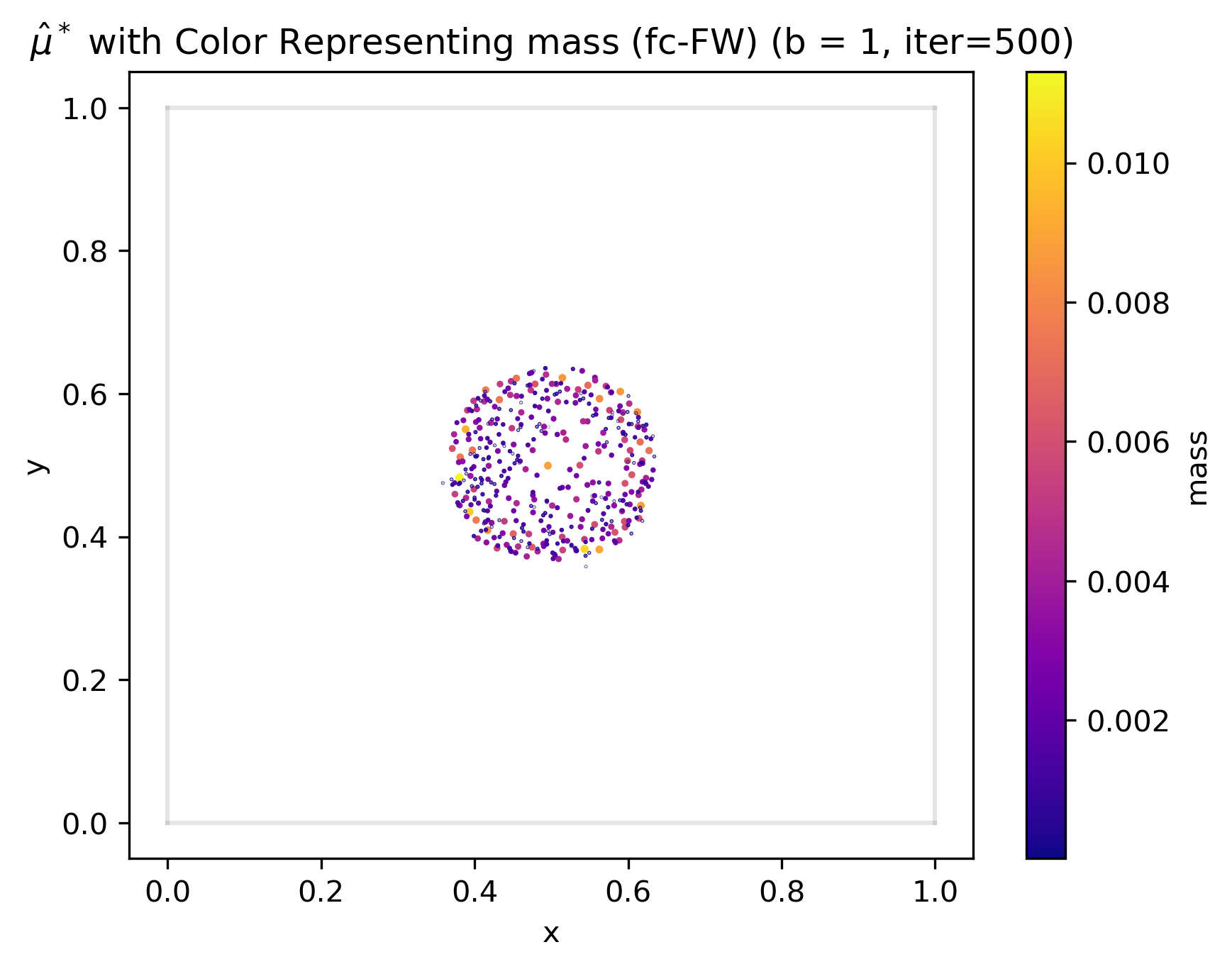}
    }
    \hfill
    \subcaptionbox{Opt Volunteer Measure $\hat{\mu}^*$ ($b=5$).}{
        \includegraphics[width=0.3\textwidth]{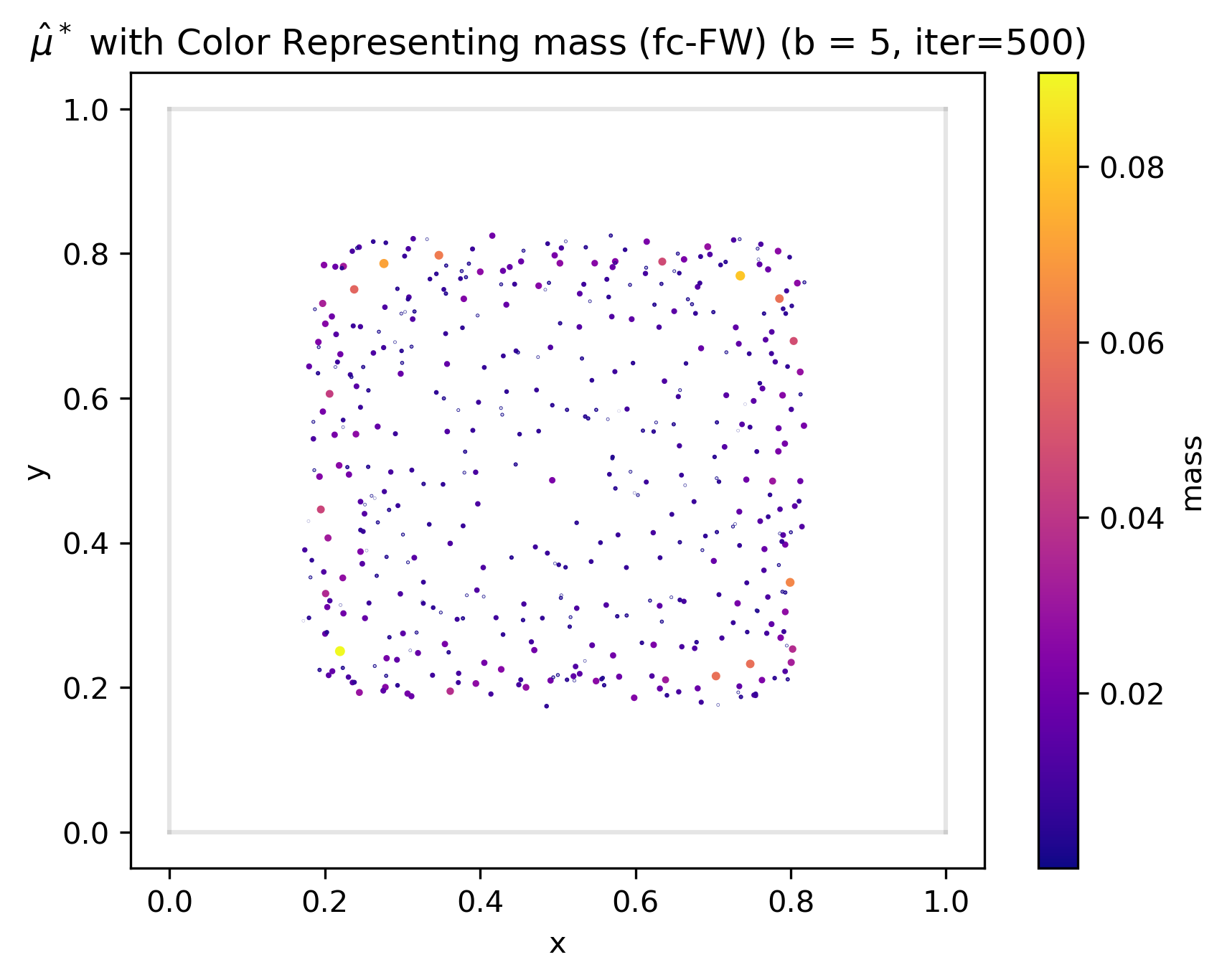}
    }
    \hfill
    \subcaptionbox{Opt Volunteer Measure $\hat{\mu}^*$ ($b=20$).}{
        \includegraphics[width=0.31\textwidth]{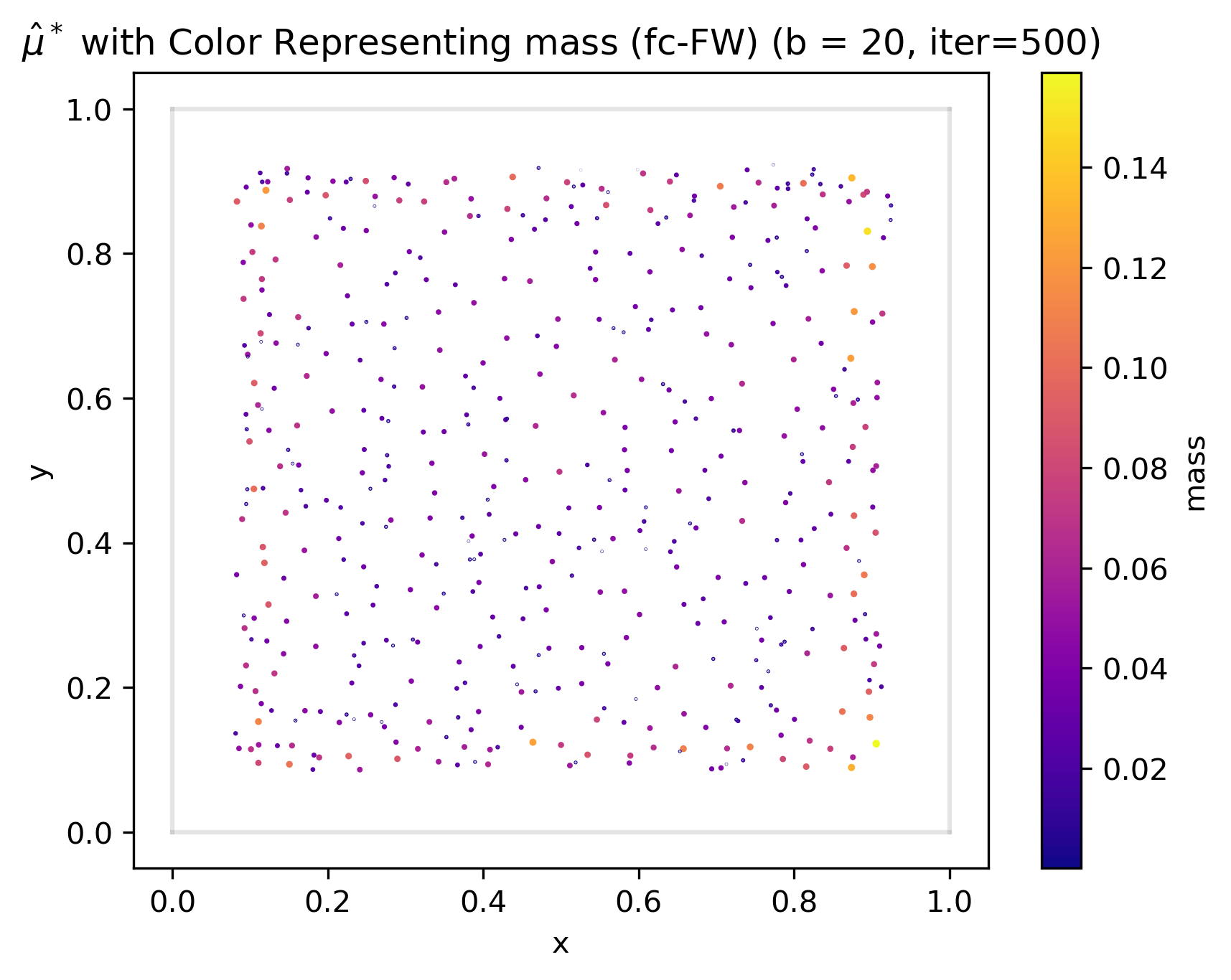}
    }
    \\[1ex]
    \subcaptionbox{Value of $J_n(\mu_k)$ ($b=1$, $n=1000$).}{
        \includegraphics[width=0.3\textwidth]{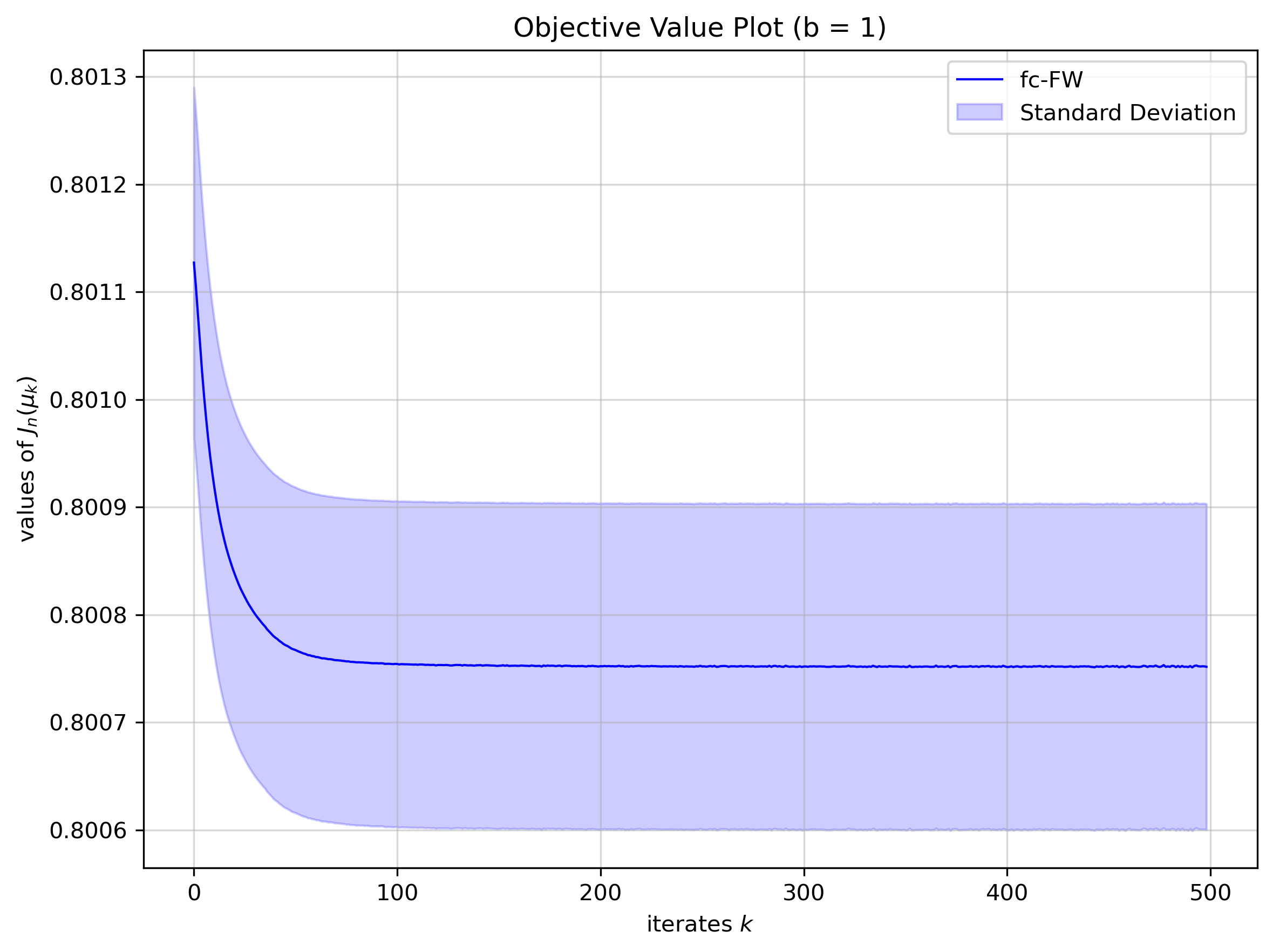}
    }
    \hfill
    \subcaptionbox{Value of $J_n(\mu_k)$ ($b=5$, $n=1000$).}{
        \includegraphics[width=0.3\textwidth]{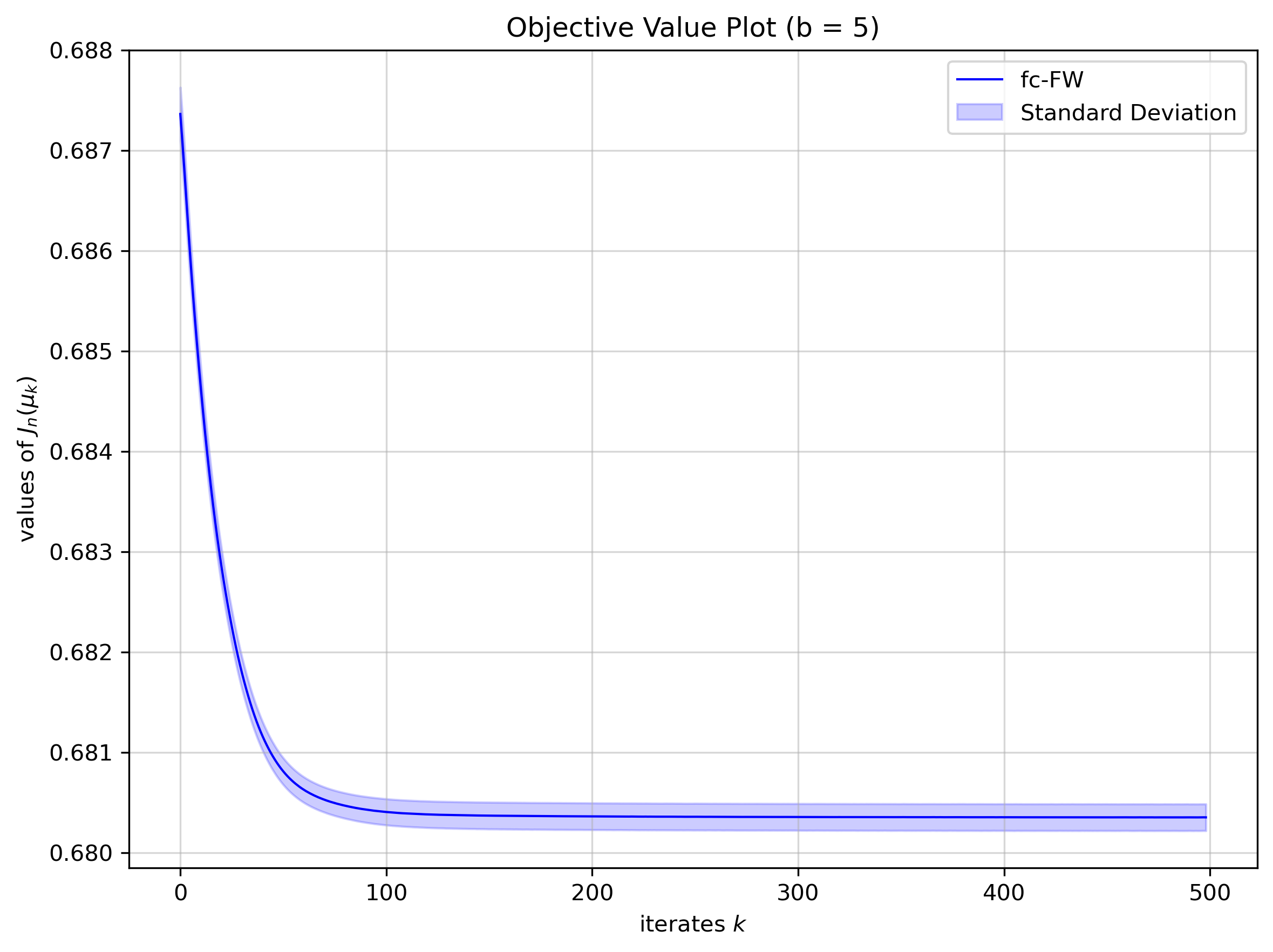}
    }
    \hfill
    \subcaptionbox{Value of $J_n(\mu_k)$ ($b=20$, $n=1000$).}{
        \includegraphics[width=0.3\textwidth]{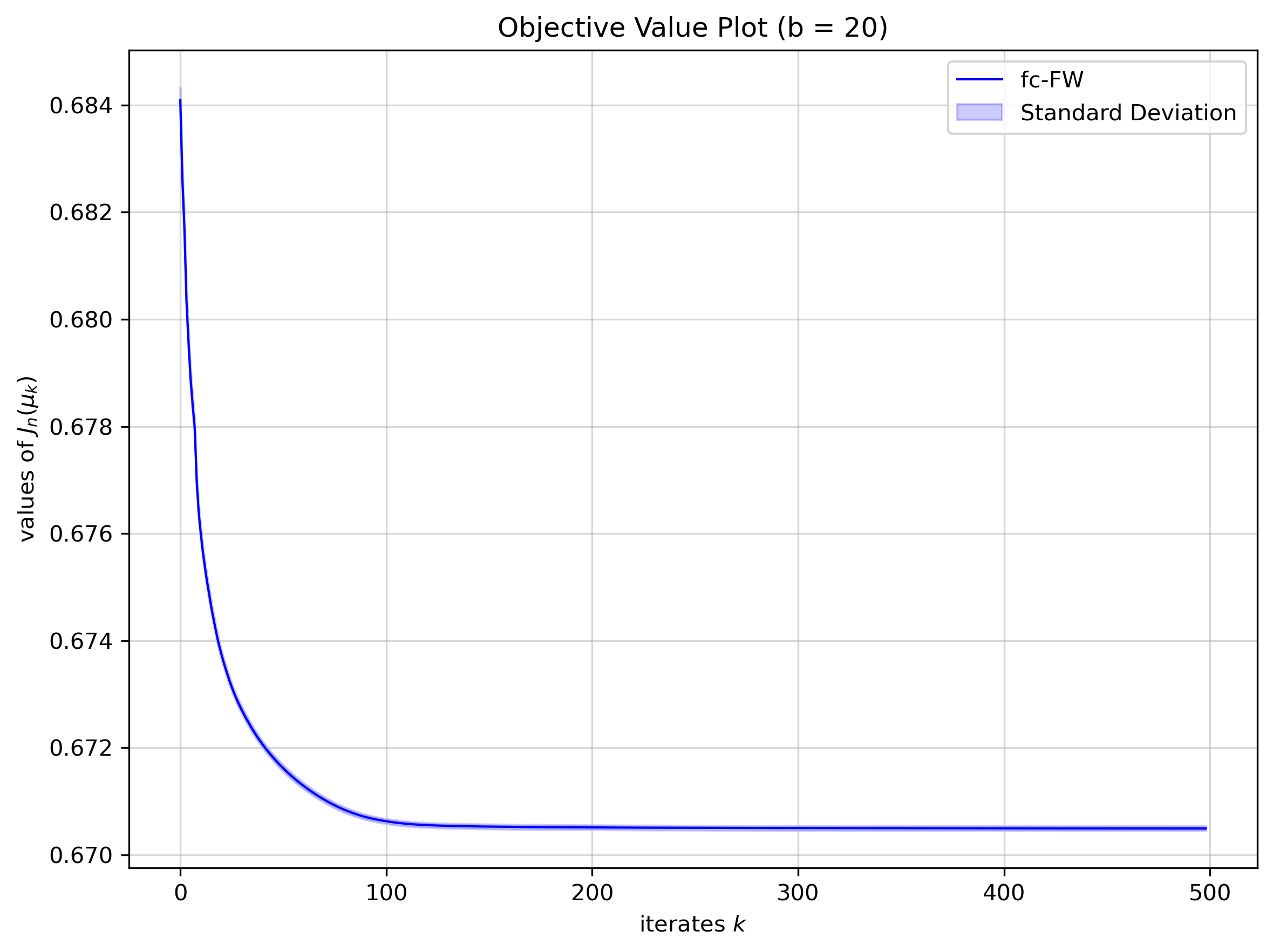}
    }
    }
}
{
Emergency Response with Uniform Incident Distribution
\label{fig:emergencyResUnif}}
{
    Figures (a), (b), and (c) show the optimal volunteer measures \( \hat{\mu}^* \) obtained using fc-FW (Algorithm~\ref{alg:fcFW}) after 500 iterations, with total volunteer mass set to \( b = 1 \), \( 5 \), and \( 20 \), respectively. The color intensity represents the assigned mass at each location, with lighter colors indicating higher density. Figures (d), (e), and (f) illustrate the convergence of \( J_n(\mu_k) \), a sample-average estimator of the objective function \( J(\mu) \), computed using \( n = 1000 \) sampled incident locations. Results are averaged over 20 independent trials, with shaded regions indicating $ \pm 1 $ standard deviation of \( J_n(\mu_k) \) values across these trials at each iteration.
}
\end{figure}

We first analyze a baseline scenario where emergency incidents follow a uniform distribution over the unit square, \( \eta = \text{Unif}([0,1] \times [0,1]) \). Figure~\ref{fig:emergencyResUnif} presents the optimal volunteer measures obtained using fc-FW after 500 iterations for total masses \( b = 1, 5, \) and \( 20 \). Despite the uniform distribution of incidents, the optimal volunteer allocation is not uniform. In Figure~\ref{fig:emergencyResUnif}(a), where \( b = 1 \), the measure is highly concentrated in the central region, suggesting that with limited resources, the best strategy is to allocate volunteers to a small but high-impact area rather than distributing them evenly. As shown in Figures~\ref{fig:emergencyResUnif}(b) and~\ref{fig:emergencyResUnif}(c), increasing \( b \) expands the support of the optimal measure, covering a larger portion of the region while maintaining a structured allocation pattern. Figures~\ref{fig:emergencyResUnif}(d)-(f) illustrate the convergence of fc-FW. 

\begin{figure}[htb]
\FIGURE{
\parbox{\textwidth}{
\centering
    \subcaptionbox{Opt Volunteer Measure $\hat{\mu}^*$ ($b=1$).}{
        \includegraphics[width=0.3\textwidth]{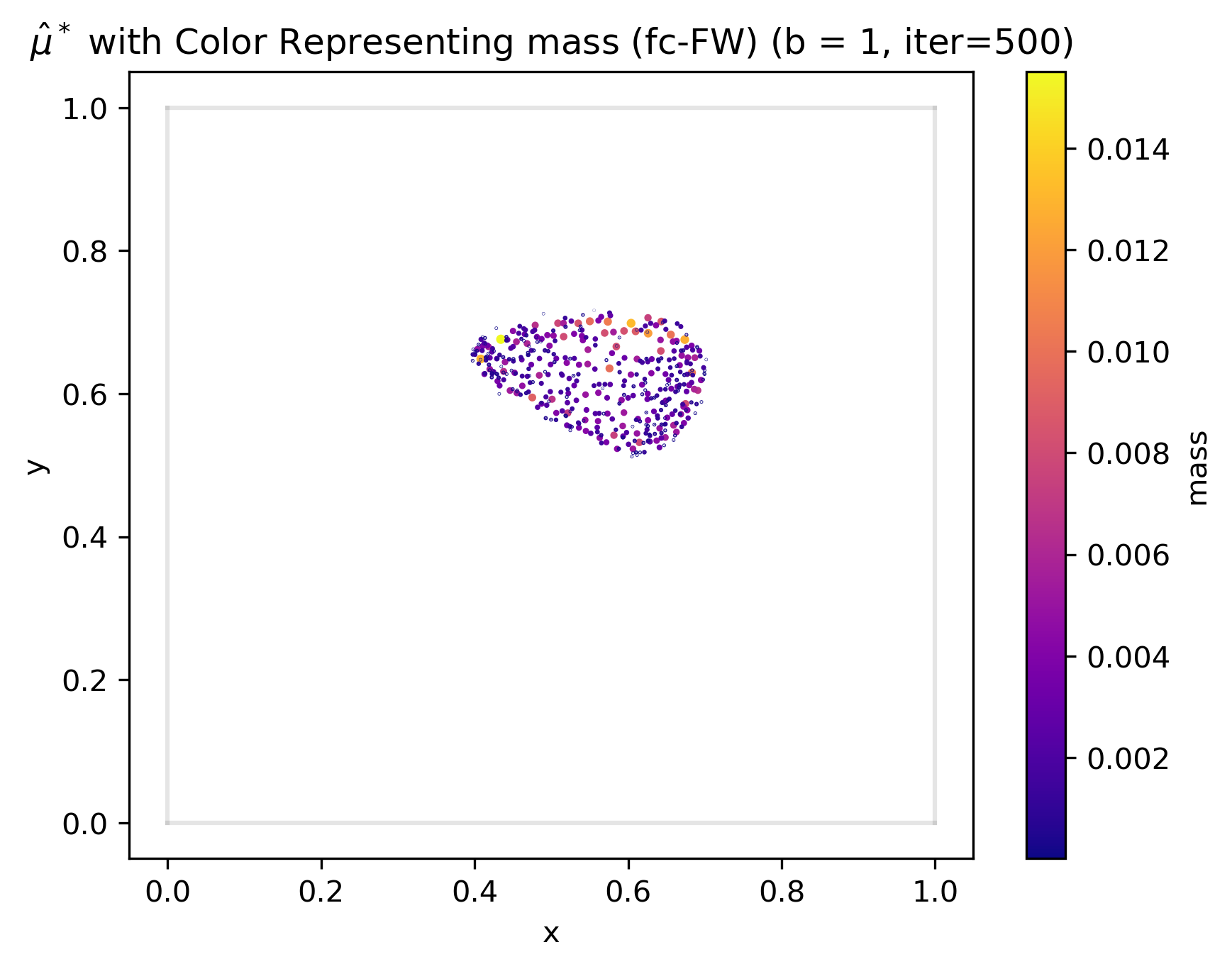}
    }
    \hfill
    \subcaptionbox{Opt Volunteer Measure $\hat{\mu}^*$ ($b=5$).}{
        \includegraphics[width=0.3\textwidth]{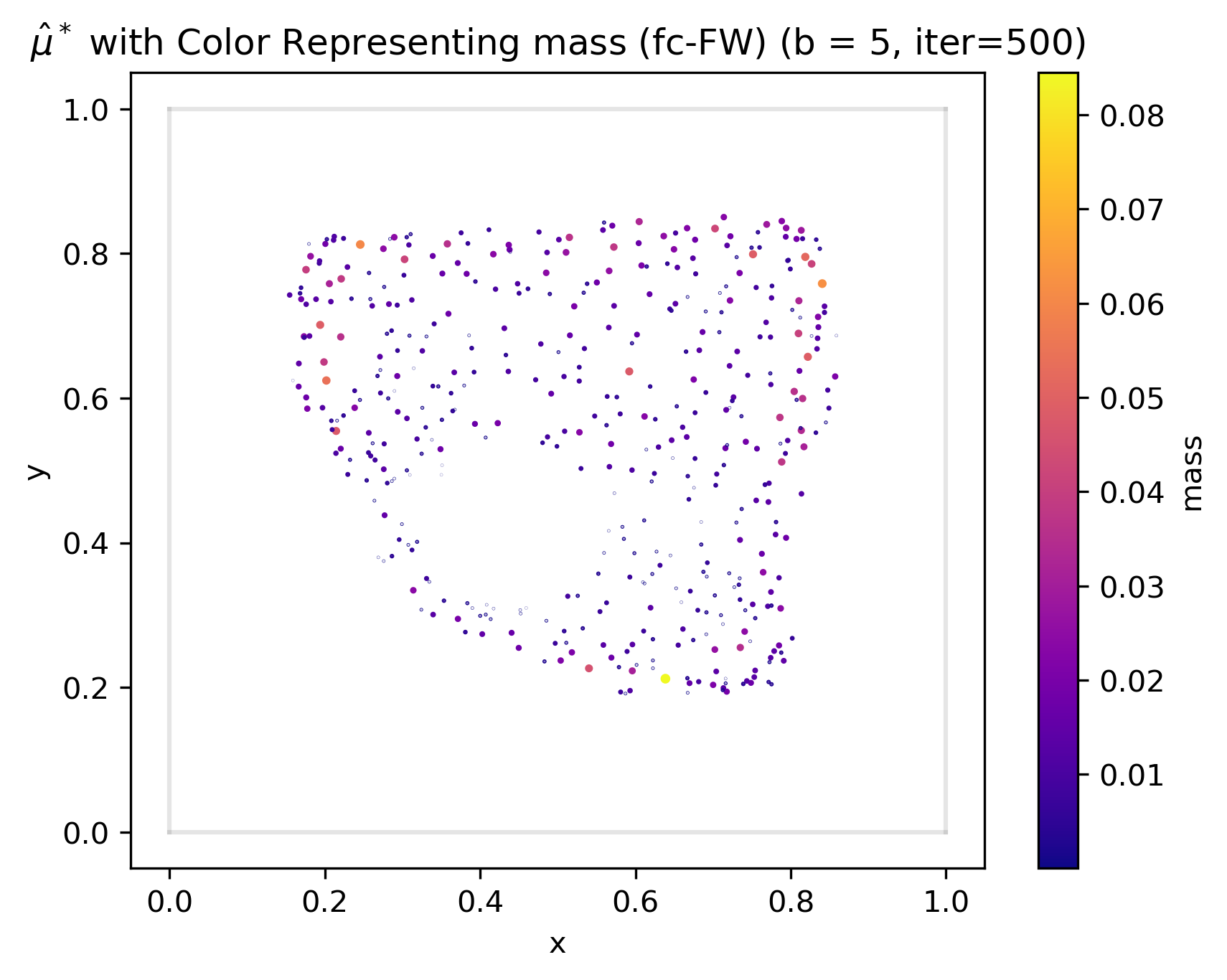}
    }
    \hfill
    \subcaptionbox{Opt Volunteer Measure $\hat{\mu}^*$ ($b=20$).}{
        \includegraphics[width=0.31\textwidth]{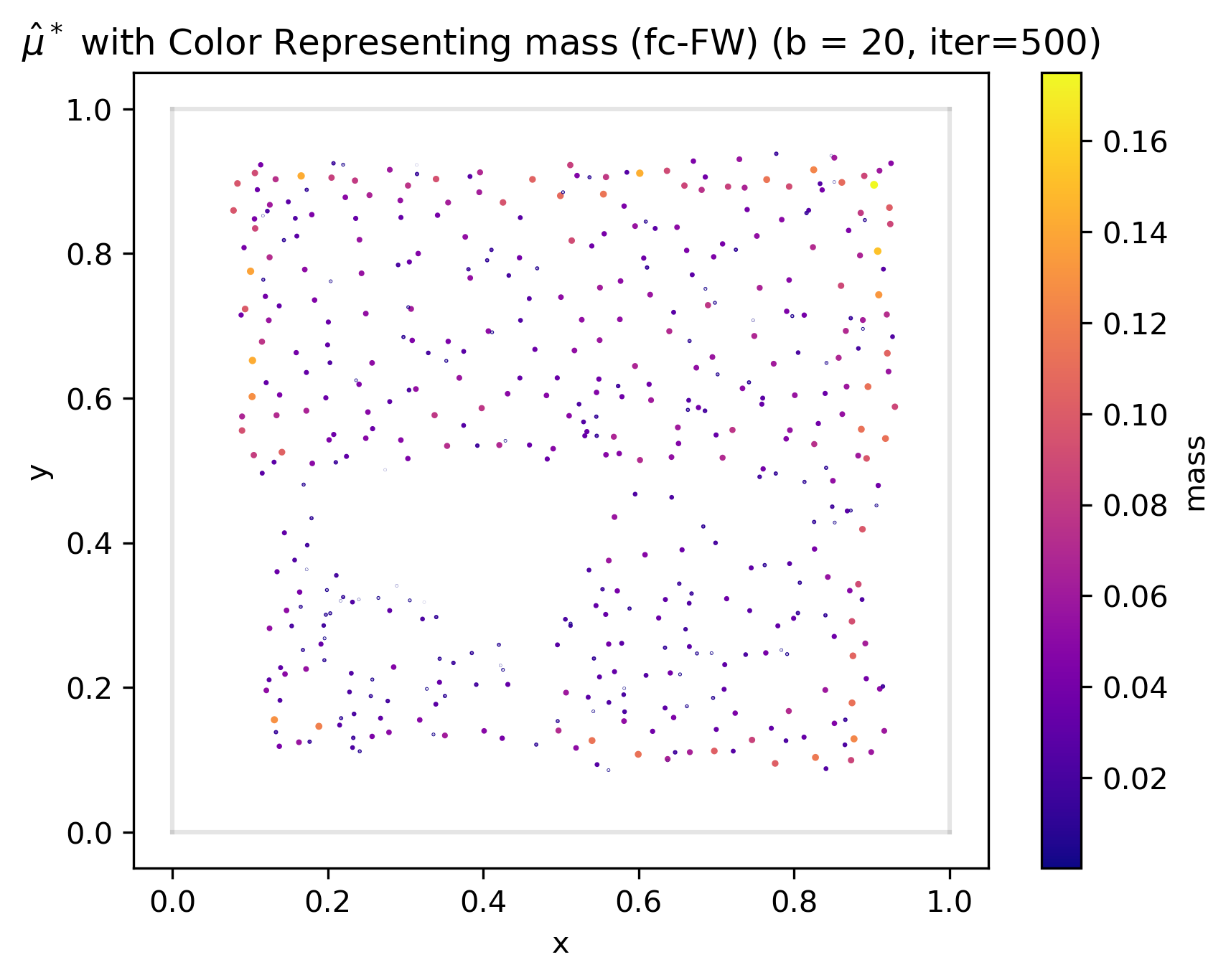}
    }
    \\[1ex]
    \subcaptionbox{Value of $J_n(\mu_k)$ ($b=1$, $n=1000$).}{
        \includegraphics[width=0.3\textwidth]{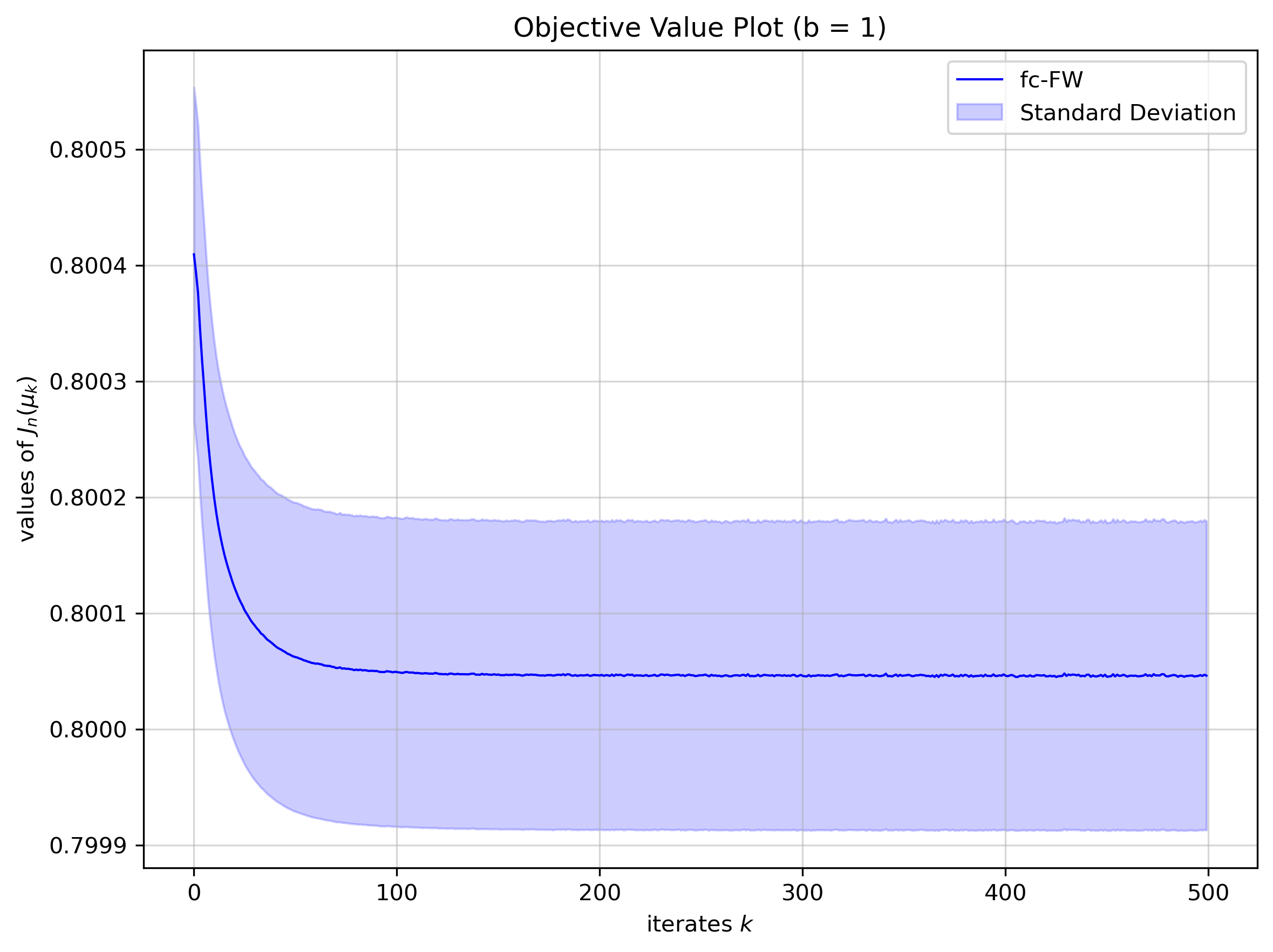}
    }
    \hfill
    \subcaptionbox{Value of $J_n(\mu_k)$ ($b=5$, $n=1000$).}{
        \includegraphics[width=0.3\textwidth]{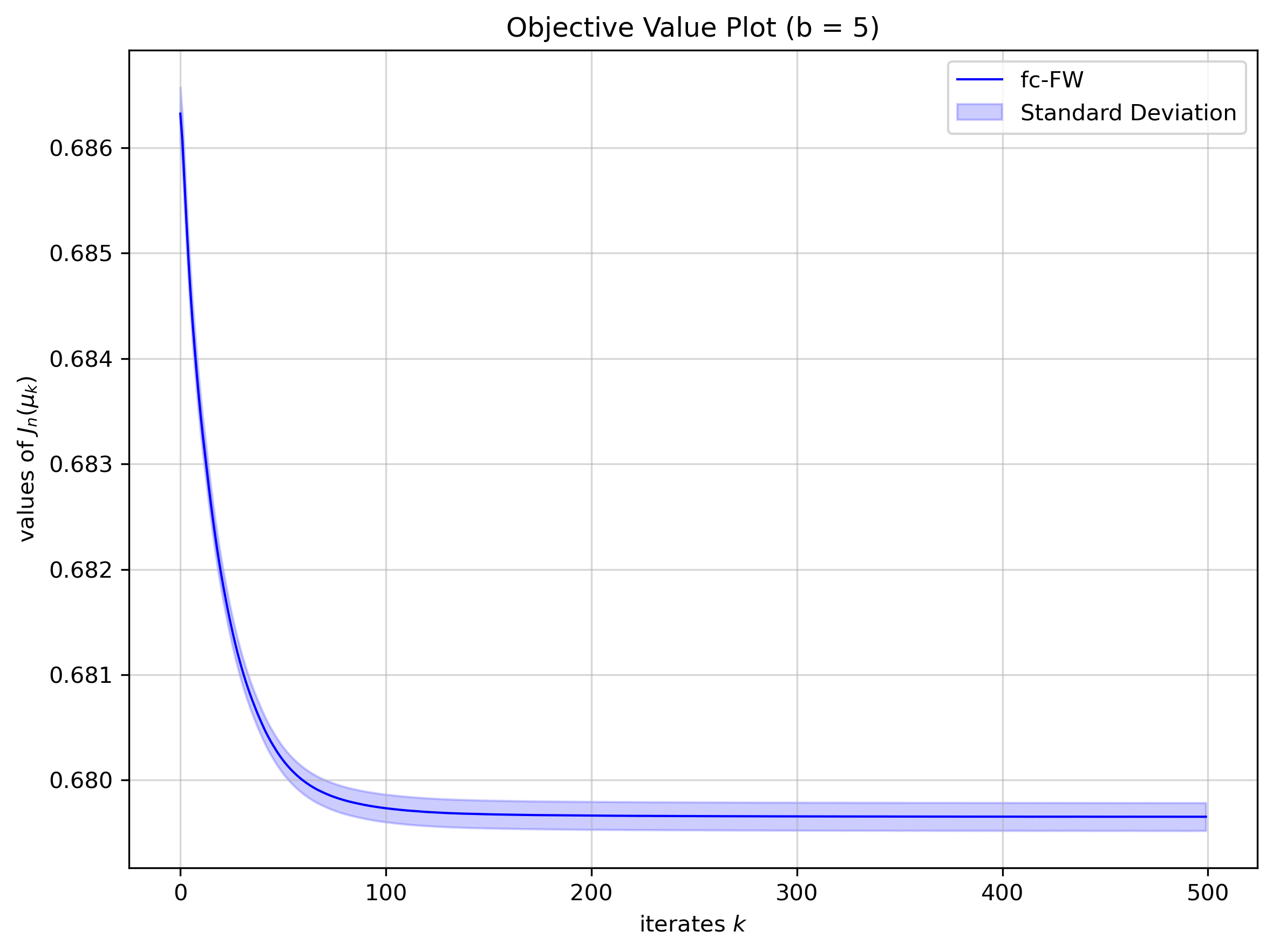}
    }
    \hfill
    \subcaptionbox{Value of $J_n(\mu_k)$ ($b=20$, $n=1000$).}{
        \includegraphics[width=0.3\textwidth]{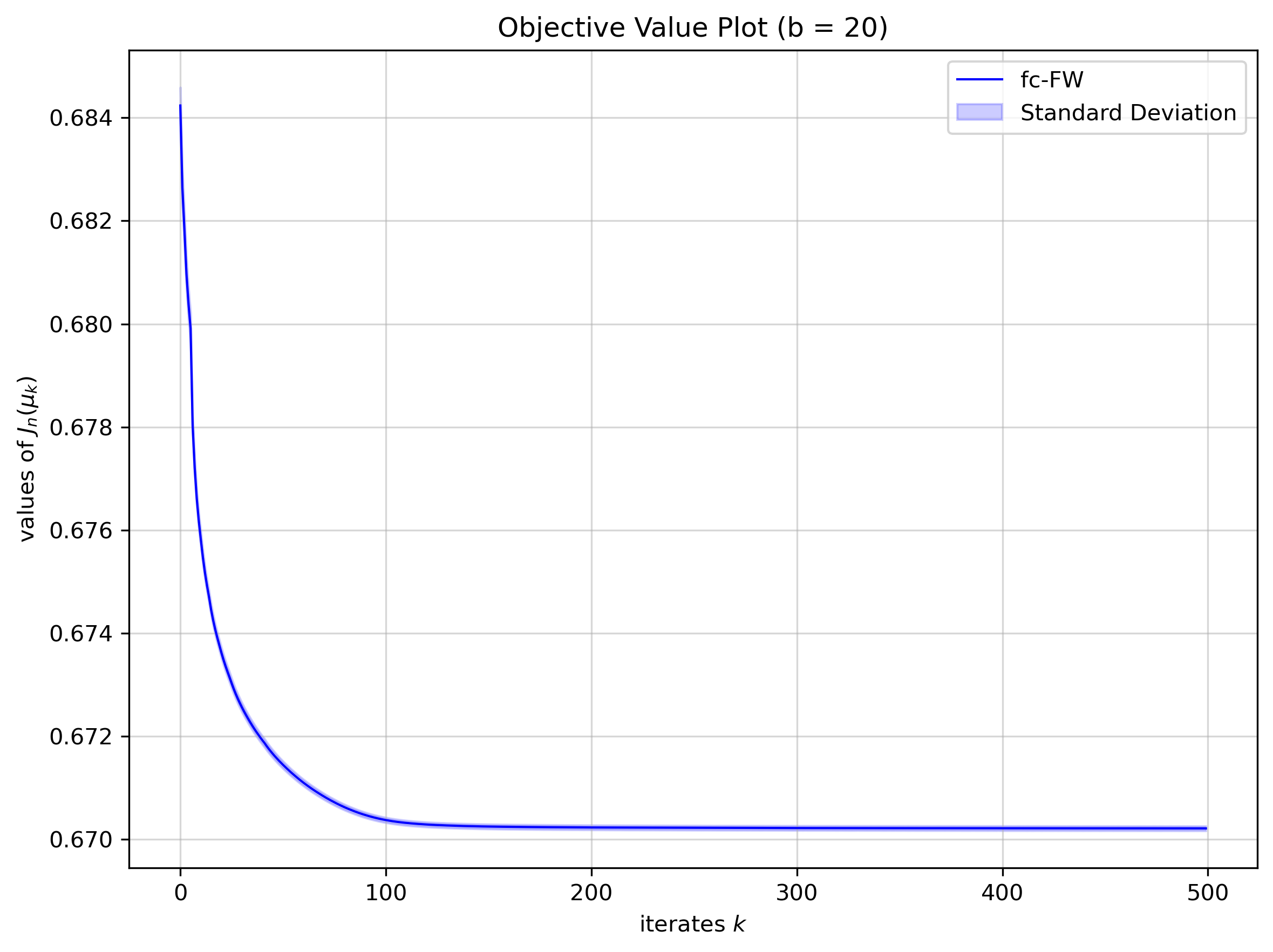}
    }
    }
}
{
Emergency Response with Mixture-of-Uniform Incident Distribution.
\label{fig:emergencyResMix}}
{
    Figures (a), (b), and (c) show the optimal volunteer measures \( \hat{\mu}^* \) obtained using fc-FW (Algorithm~\ref{alg:fcFW}) after 500 iterations for total volunteer masses \( b = 1, 5, \) and \( 20 \), respectively. The color intensity represents the assigned mass at each location, with lighter colors indicating higher density. Figures (d), (e), and (f) illustrate the convergence of \( J_n(\mu_k) \), a sample-average estimator of the objective function \( J(\mu) \), computed using \( n = 1000 \) sampled incident locations. Results are averaged over 20 independent trials, with shaded regions indicating $ \pm 1 $ standard deviation of \( J_n(\mu_k) \) values across these trials at each iteration.
}
\end{figure}

Building on the uniform case, we next examine a more structured incident distribution where emergency incidents follow a Mixture-of-Uniform distribution, \( \eta = \text{MixUnif}([0,1] \times [0,1]) \). Figure~\ref{fig:emergencyResMix} presents the optimal volunteer measures obtained using fc-FW for total masses \( b = 1, 5, \) and \( 20 \), while Figures~\ref{fig:emergencyResMix}(d)-(f) illustrate the convergence of fc-FW across different values of \( b \).

The results in Figures~\ref{fig:emergencyResMix}(a)-(c) show that although the incident distribution follows a mixture of uniform distributions, the optimal volunteer measure does not inherit this structure. Instead, the allocation pattern is skewed, with volunteers concentrating in high-probability regions rather than being proportionally distributed across all subregions. Moreover, some areas receive little to no allocation, indicating that the optimal strategy prioritizes select locations rather than simply mirroring the incident distribution.

As in the uniform case, the structure of the optimal measure is influenced by the total volunteer mass \( b \). When \( b = 1 \) (Figure~\ref{fig:emergencyResMix}(a)), the allocation is highly localized, prioritizing a small but high-impact region. As \( b \) increases to 5 and 20 (Figures~\ref{fig:emergencyResMix}(b)-(c)), the support of the optimal measure expands, covering a broader area while still reflecting the non-uniform incident probabilities. 

These findings provide valuable insights into the microscopic structure of optimal volunteer allocation within the unit square for the OHCA emergency response problem~\eqref{restrictedopt}. The optimal volunteer distribution does not directly mirror the incident distribution. With a limited budget, resources are concentrated in a small but high-impact region, whereas a larger budget enables broader spatial coverage. These results also demonstrate the effectiveness of the proposed fc-FW method in computing optimal allocations.

\subsection{Optimal Volunteer Allocation in Auckland City.}

\begin{figure}[htb]
\FIGURE
{\includegraphics[width=0.6\textwidth]{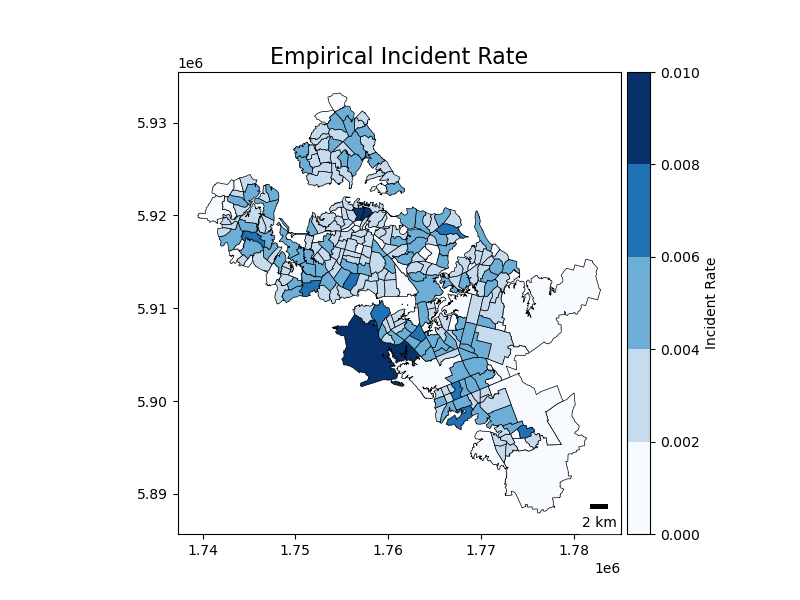} }
{The empirical incident rate per area unit in Auckland. \label{fig:empiricalRate} }
{The empirical incident rate per area unit in Auckland, New Zealand. This visualization follows the representation in~\cite{van2024modeling} but is reproduced here for clarity.}
\end{figure}

In this experiment, we consider a real-world scenario in Auckland, New Zealand, following many of the elements in the setup in van den Berg et al.~\cite{van2024modeling}. Our calculations omit a number of practical elements that are present in that study for simplicity, so our results should not be construed as providing realistic predictions. Rather, our goal is to demonstrate that our approach scales to city-scale systems.

The city is partitioned into a finite number of area units indexed by \( l \in \mathcal{L} = \{1, 2, \dots, L\} \), where \( L = 287 \). Each unit \( l \) has an associated incident probability \( \lambda_l \), and incidents are assumed to occur uniformly within each unit, denoted by the distribution \( u_l \). The resulting incident distribution is a mixture of uniform distributions:
\begin{equation*}
    \eta = \sum_{l \in \mathcal{L}} \lambda_l u_l, \quad \sum_{l \in \mathcal{L}} \lambda_l = 1.
\end{equation*}
The empirical incident rates \( \lambda_l \) are estimated in~\cite{van2024modeling} using the GoodSAM dataset, which contains real-world records of out-of-hospital cardiac arrests (OHCAs) in Auckland from 2013 to 2020. Since we do not have access to the original data, we directly use the published estimates without re-estimation. Figure~\ref{fig:empiricalRate} visualizes the incident rates across the area units.

\begin{figure}[htb]
\FIGURE{
\parbox{\textwidth}{
\centering
    \subcaptionbox{Optimal Volunteer Distribution ($b=50$).}{
        \includegraphics[width=0.48\textwidth]{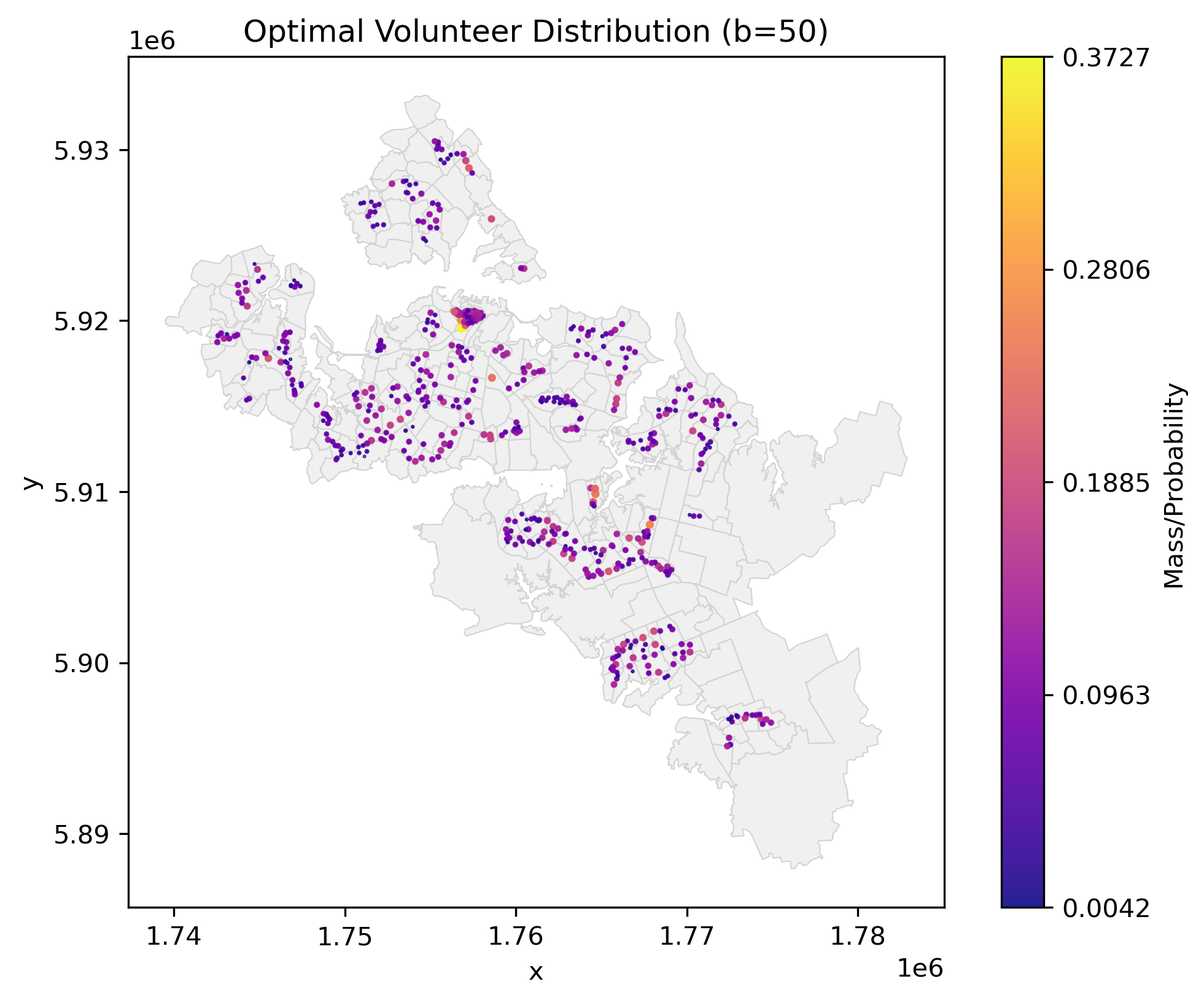}
    }
    \hfill
    \subcaptionbox{Optimal Volunteer Distribution ($b=500$).}{
        \includegraphics[width=0.48\textwidth]{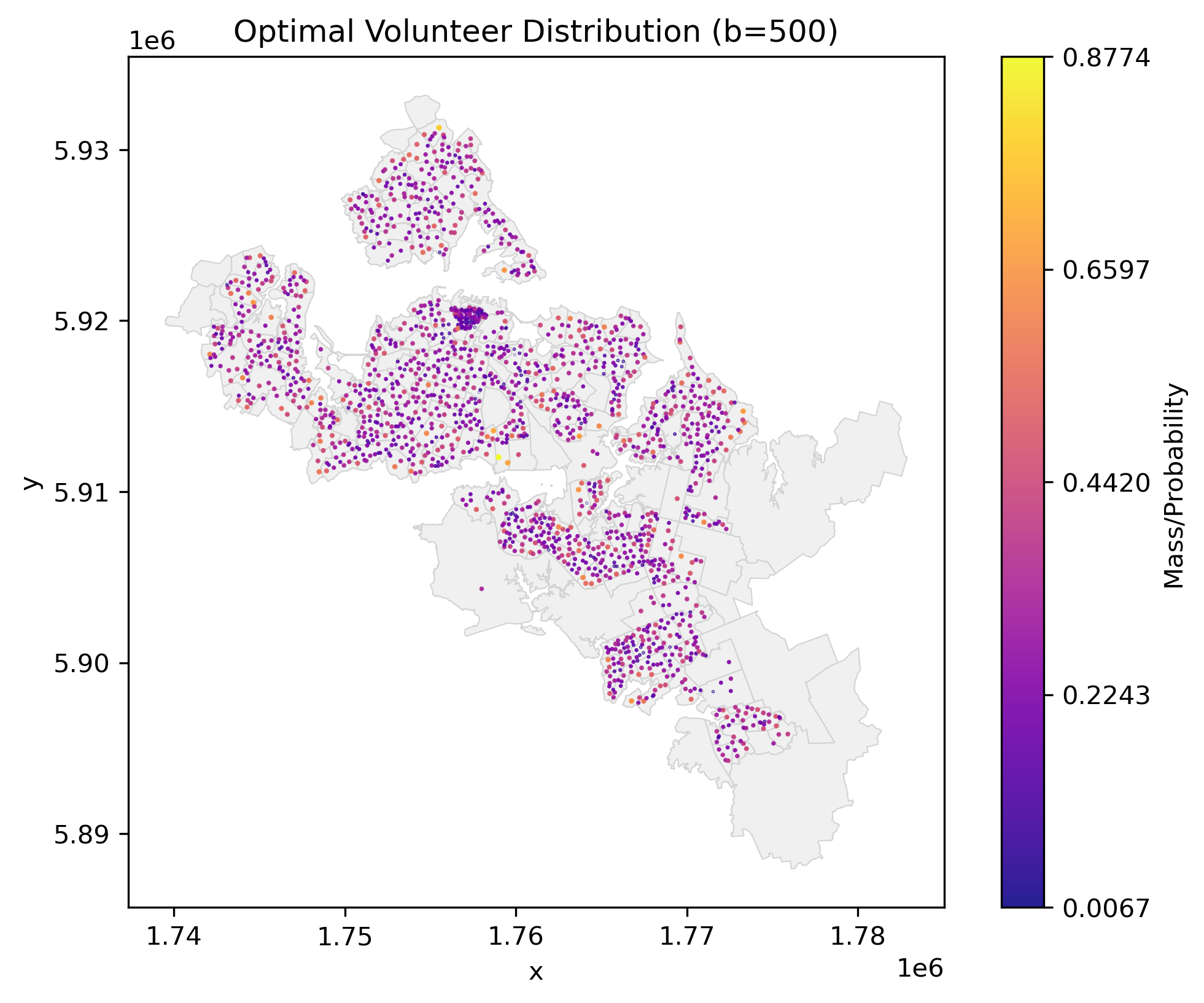}
    }
    \\[1ex]
    \subcaptionbox{Optimal Volunteer Distribution ($b=5000$).}{
        \includegraphics[width=0.48\textwidth]{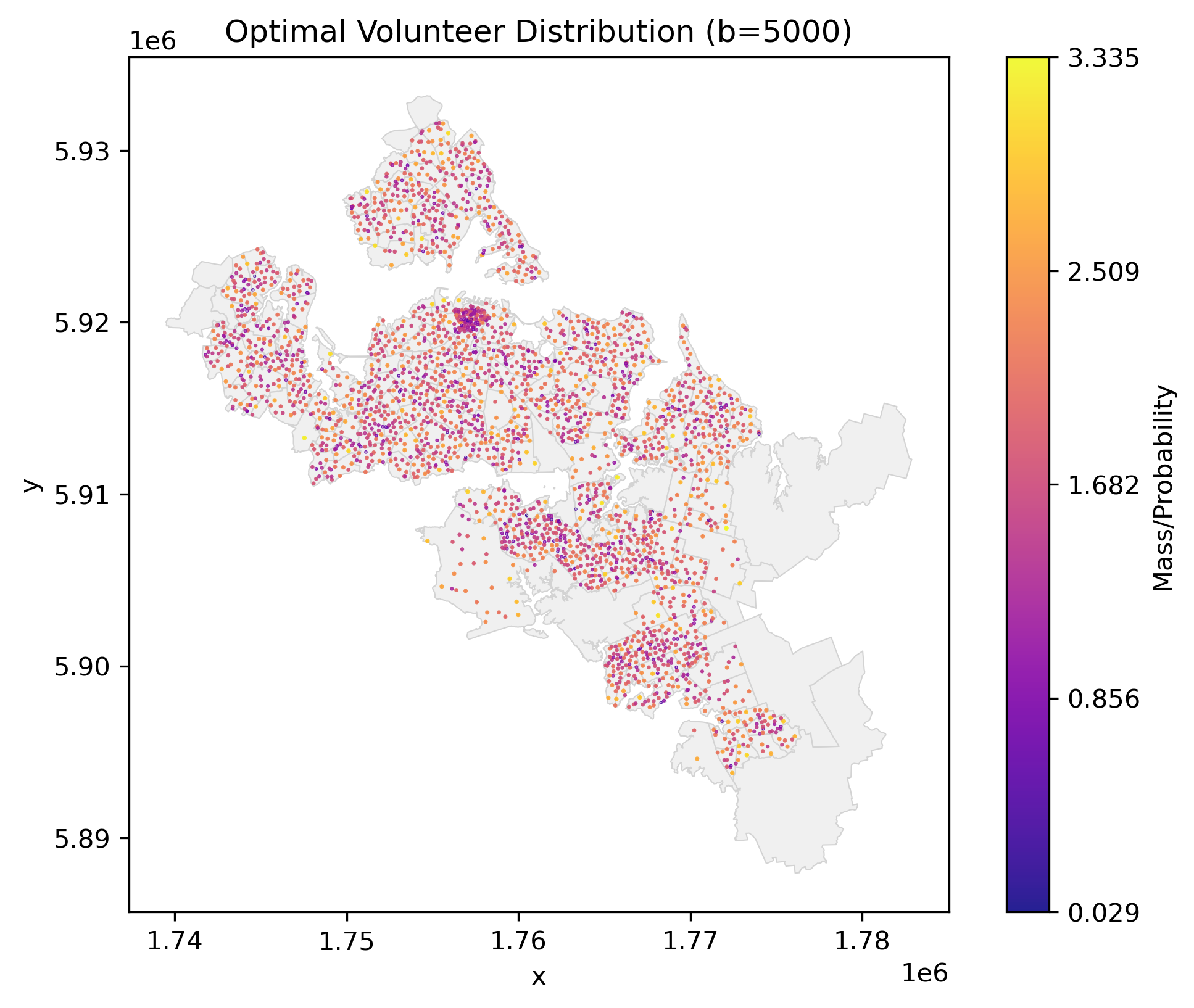}
    }
    \subcaptionbox{Convergence of \( J_n(\mu_k) \) for \( b = 50, 500, 5000 \).}{
        \includegraphics[width=0.48\textwidth]{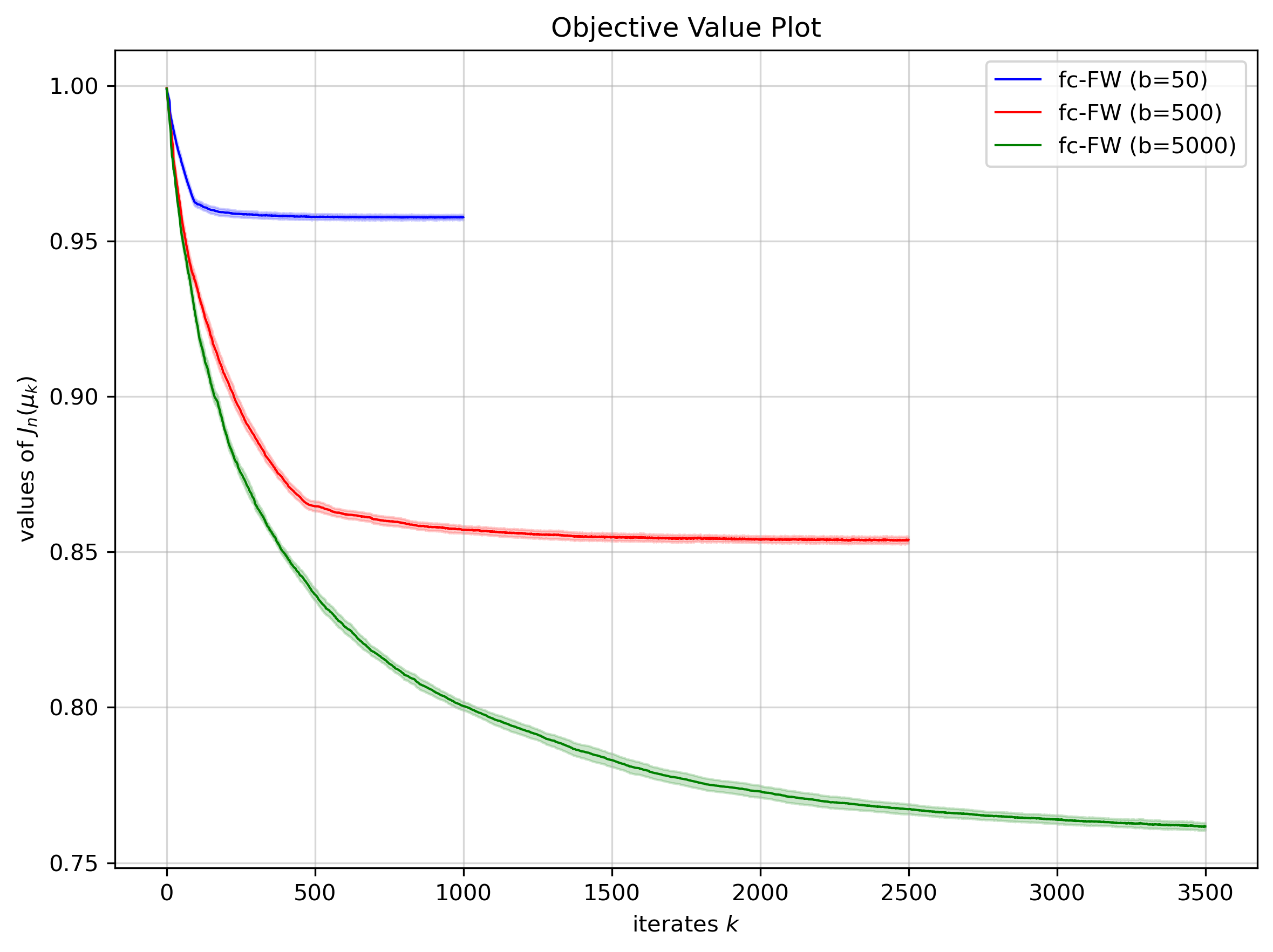}
    }
    }
}
{
Optimal Volunteer Allocation for Auckland City.
\label{fig:emergencyAuckland}}
{
     Figures (a), (b), and (c) show the optimal volunteer distributions in Auckland City obtained using fc-FW with total masses \( b = 50 \), \( 500 \), and \( 5000 \), respectively. The color intensity indicates the assigned mass at each location, with lighter colors representing higher volunteer density. The algorithm runs for 1000, 2500, and 3500 iterations for \( b = 50 \), \( 500 \), and \( 5000 \), respectively. Figure (d) presents the convergence of the sample-average estimator \( J_n(\mu_k) \) over iterations for each case, averaged over 10 independent trials with shaded regions indicating standard deviations across runs.
}
\end{figure}

We consider three total volunteer mass values: \( b = 50 \), \( 500 \), and \( 5000 \). The choices \( b = 500 \) and \( 5000 \) follow the setting in Section~6.3 of~\cite{van2024modeling}, where these values are used to reflect moderate and large-scale community responder deployments in Auckland. These numbers correspond to realistic participation rates based on comparisons to established CFR systems. We additionally include \( b = 50 \) to explore more constrained allocation patterns under limited volunteer resources.

Figure~\ref{fig:emergencyAuckland} shows the optimal volunteer allocations computed using fc-FW for each of these volunteer resource levels. Again, while the incident distribution \( \eta \) is a mixture of uniform distributions, the optimal volunteer measures do not directly reflect this structure. When \( b = 50 \), the volunteer mass is concentrated in a few high-risk areas, again reflecting a strategy of focusing limited resources on the most critical zones. As the total mass increases to \( b = 500 \) and \( 5000 \), the allocation becomes more spread out, covering a larger portion of the city. However, even with a large total mass such as \( b = 5000 \), the optimal distribution does not appear uniform; it remains concentrated in high-demand areas where volunteer impact is greatest. These results are encouraging in that they demonstrate the scalability and effectiveness of fc-FW in realistic, complex spatial domains.

\section{Does the Choice of Norm Matter?}\label{sec:normMatter}
In the emergency response problem defined in~\eqref{mainopt} and its restricted version~\eqref{restrictedopt}, the \(L_2\)-norm is the default choice for measuring the distance between incidents and volunteers. Specifically, the objective function is expressed as  
\begin{equation*}
    J(\mu) = \int_{\mathcal{S}} \eta(\mathrm{d}y) \int_0^\infty \exp\Big\{-\mu(\bar{B}(y,t))\Big\} \, \mathrm{d}\death(t),
\end{equation*}  
where the norm appearing in \(B(x, t) = \{y \in \mathbb{R}^2: \|y-x\| \leq t\}\) is the \(L_2\)-norm, defined as \(\|x\| = \sqrt{x_1^2 + x_2^2}\), for $x \in \mathbb{R}^2$. In this section, we ask if there are other norms, e.g., the $L_1$ norm given as \(\|x\|_1 = |x_1| + |x_2|\) for \(x \in \mathbb{R}^2\), that might be more suitable from the standpoint of computation, or from the standpoint of implementation in urban environments. 

The first insight corresponding to the use of alternative norms has to do with the corresponding ordering of the objective functions. Specifically, notice that when using the $L_1$ norm, the objective function becomes
\begin{equation}\label{eq:objL1}
    J_1(\mu) := \int_{\mathcal{S}} \eta(\mathrm{d}y) \int_0^\infty \exp\Big\{-\mu(\bar{B}_1(y,t))\Big\} \, \mathrm{d}\death(t),
\end{equation} 
where \(B_1(x, t) := \{y \in \mathbb{R}^2: \|y-x\|_1 \leq t\}\) representing the $L_1$ ball centered at \(x\) and having radius \(t\). Moreover, the $L_1$ ball lies entirely within the $L_2$ ball of the same center and radius. This implies that for any volunteer measure, \(\mu(\bar{B}(y,t)) \geq \mu(\bar{B}_1(y,t))\), and  consequently,
\begin{equation}\label{eq:L1bound}
    J(\mu) \leq J_1(\mu) \quad \forall \mu \in \mathcal{M}^+(\mathrm{cvx}(\mathcal{S}), b),
\end{equation}
indicating that \(J_1(\mu)\) is an upper bound for \(J(\mu)\). (Analogous orderings for other norms follow in a straightforward manner.) Apart from providing the upper bound in~\eqref{eq:L1bound}, the objective $J_1$ in~\eqref{eq:objL1} is also interesting in its own right since it better reflects emergency response settings in many grid-based urban layouts, while also providing computational advantages, as we shall see, below.

To investigate this further, we revisit the special case of \(n\) fixed demand points discussed in Section~\ref{sec:specialCases}, but with $J_1$ in~\eqref{eq:objL1} as the objective. Specifically, assume that the OHCA events occur at \(n\) fixed demand points \(\mathcal{S} = \{y_1, y_2, \dots, y_n \}\), with the incident measure defined as: $\eta = \sum_{i=1}^n \lambda_i \delta_{y_i}$, $\sum_{i=1}^n \lambda_i = 1$, $\lambda_1, \dots, \lambda_n \geq 0.$ The corresponding emergency response problem then becomes
\begin{align}\label{optL1}
    \text{minimize} & \quad J_1(\mu) = \sum_{i=1}^n \lambda_i \int_0^\infty \exp\Big\{-\mu(\bar{B}_1(y_i,t))\Big\} \, \mathop{d\death(t)}, \nonumber \\
    \text{subject to} & \quad \mu \in \mathcal{M}^+(\mathrm{cvx}(\mathcal{S}), b), \tag{$P(L_1)$}
\end{align}
where \(\bar{B}_1(y_i, t) = \{x \in \mathbb{R}^2 : \|x - y_i\|_1 \leq t\}\) represents the $L_1$ ball centered at \(y_i\) with radius \(t\). The influence function for the objective $J_1$ is
\begin{align}\label{eq:influenceL1}
    h_{\mu}(x) &= \sum_{i=1}^n \lambda_i \int_0^\infty 
    \left( \mu\left(\bar{B}_1(y_i,t) \right) - b \, \mathbb{I}_{[\|x-y_i\|_1, \infty)}(t) \right) 
    \exp\left\{-\mu\left(\bar{B}_1(y_i,t) \right)\right\} \, \mathop{d\death(t)} \nonumber \\ &= \sum_{i=1}^n \int_0^{\|x-y_i \|_1}  b\,\lambda_i\exp\left\{-\mu\left(\bar{B_1}(y_i,t) \right) \right\} \, \mathop{d\death(t)} + c,
\end{align} where $c$ is a constant in $\mathbb{R}$.  

\begin{figure}[htbp]
    \centering
    \begin{tikzpicture}
    \coordinate (y1) at (2, 4); 
    \coordinate (y2) at (3, 2); 
    \coordinate (y3) at (5, 3); 
    \coordinate (y4) at (4, 1); 
    \draw[dashed] (0, 4) -- (6, 4); 
    \draw[dashed] (0, 2) -- (6, 2); 
    \draw[dashed] (0, 3) -- (6, 3); 
    \draw[dashed] (0, 1) -- (6, 1); 
    \draw[dashed] (2, 0) -- (2, 5); 
    \draw[dashed] (3, 0) -- (3, 5); 
    \draw[dashed] (5, 0) -- (5, 5); 
    \draw[dashed] (4, 0) -- (4, 5); 
        \fill[blue!20] (3, 2) rectangle (4, 3); 
    \filldraw[black] (y1) circle (2pt) node[above right] {\(y_1\)};
    \filldraw[black] (y2) circle (2pt) node[above left] {\(y_2\)};
    \filldraw[black] (y3) circle (2pt) node[above right] {\(y_3\)};
    \filldraw[black] (y4) circle (2pt) node[above right] {\(y_4\)};
    \node at (2, 5) [above] {\(a_{(1)}\)};
    \node at (3, 5) [above] {\(a_{(2)}\)};
    \node at (4, 5) [above] {\(a_{(3)}\)};
    \node at (5, 5) [above] {\(a_{(4)}\)};
    \node at (0, 4) [left] {\(b_{(4)}\)};
    \node at (0, 3) [left] {\(b_{(3)}\)};
    \node at (0, 2) [left] {\(b_{(2)}\)};
    \node at (0, 1) [left] {\(b_{(1)}\)};
    \node at (3.5, 2.5) [black] {\(S_{2,2}\)};
\end{tikzpicture}
    \caption{Grid for Demand Points (\(n=4\)).}
    \label{fig:GridL1}
\end{figure}
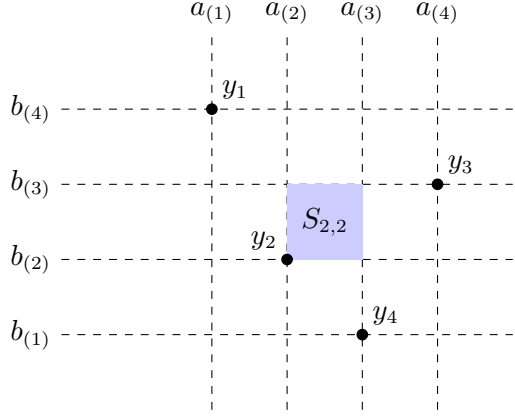

The influence function in~\eqref{eq:influenceL1} exhibits remarkable regularity that can be exploited to great effect. To see this, let \((a_i, b_i)\) for \(i = 1, \dots, n\) denote the coordinates of the demand locations \(y_i\). Define \(a_{(1)} \leq a_{(2)} \leq \dots \leq a_{(n)}\) and \(b_{(1)} \leq b_{(2)} \leq \dots \leq b_{(n)}\) as the order statistics of \(\{a_1, a_2, \dots, a_n\}\) and \(\{b_1, b_2, \dots, b_n\}\), respectively. Figure~\ref{fig:GridL1} illustrates the grid generated by \(y_1, \dots, y_n\), where each rectangle is defined as:
\begin{equation}\label{eq:rectangle}
    S_{j,k} := \{x = (a, b) \in \mathbb{R}^2 : a \in [a_{(j)}, a_{(j+1)}], \, b \in [b_{(k)}, b_{(k+1)}]\},
\end{equation}
for \(j, k = 1, \dots, n-1\). With this setup, the following lemma asserts that the influence function \(h_{\mu}(x)\) in~\eqref{eq:influenceL1} is piecewise strictly concave, that is, strictly concave over each rectangle.

\begin{lemma}[Strict Concavity]\label{lemma:piecewiseConcave}
    The influence function \(h_{\mu}(x)\) at any $\mu\in\mathcal{M}^+(\mathrm{cvx}(\mathcal{S}), b)$ in~\eqref{eq:influenceL1} is strictly concave within each rectangle \(S_{j,k}\) for \(j, k = 1, \dots, n-1\), that is, for any $t \in (0, 1)$ and \(x_1, x_2 \in S_{j,k}\) with $x_1 \ne x_2$,
    \begin{equation}
        h_{\mu}(t x_1 + (1-t)x_2) > t h_{\mu}(x_1) + (1-t)h_{\mu}(x_2).
    \end{equation}
\end{lemma}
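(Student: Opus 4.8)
The plan is to show that within a fixed rectangle $S_{j,k}$, each summand of $h_\mu(x)$ in~\eqref{eq:influenceL1} is strictly concave in $x$, so that the sum is strictly concave. Writing $h_\mu(x) = \sum_{i=1}^n b\lambda_i\, g_i(x) + c$ with $g_i(x) := \int_0^{\|x - y_i\|_1} \exp\{-\mu(\bar B_1(y_i,t))\}\, d\death(t)$, it suffices to analyze a single $g_i$. The key structural observation is that on the \emph{interior} of any rectangle $S_{j,k}$, the quantity $\|x - y_i\|_1 = |a - a_i| + |b - b_i|$ is an \emph{affine} function of $x = (a,b)$: because $y_i$ is one of the grid-defining points, no coordinate line $a = a_i$ or $b = b_i$ passes through the interior of $S_{j,k}$, so the signs of $a - a_i$ and $b - b_i$ are constant on $S_{j,k}$. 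Hence there are fixed signs $\sigma_i,\tau_i\in\{\pm1\}$ with $\|x - y_i\|_1 = \sigma_i(a - a_i) + \tau_i(b - b_i) =: \ell_i(x)$, an affine, non-constant function of $x$ whose gradient is $(\sigma_i,\tau_i)\neq 0$.

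Next I would reduce the concavity of $g_i = \Phi_i \circ \ell_i$ to a one-dimensional statement, where $\Phi_i(r) := \int_0^r \exp\{-\mu(\bar B_1(y_i,t))\}\, d\death(t)$ for $r\ge 0$. Since $\ell_i$ is affine, $g_i$ is concave on $S_{j,k}$ as soon as $\Phi_i$ is concave on the relevant interval of $r$-values, and $g_i$ is \emph{strictly} concave along any segment on which $\ell_i$ is non-constant provided $\Phi_i$ is strictly concave. So the crux is: $\Phi_i' (r) = \death'(r)\,\exp\{-\mu(\bar B_1(y_i,r))\}$ (interpreting the derivative through $d\death$), and I must argue this is strictly decreasing in $r$. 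Two effects push in the same direction: $\death$ is strictly concave by assumption, so $\death'$ is strictly decreasing; and $r\mapsto \mu(\bar B_1(y_i,r))$ is nondecreasing (the $L_1$ balls grow with $r$), so $\exp\{-\mu(\bar B_1(y_i,r))\}$ is nonincreasing. The product of a strictly decreasing positive function and a nonincreasing nonnegative function is strictly decreasing, giving strict concavity of $\Phi_i$. (Some care is needed because $\death$ need not be differentiable and $\mu(\bar B_1(y_i,\cdot))$ can jump; the clean way is to write, for $r_1 < r_2$, $\Phi_i(r_2) - \Phi_i(r_1) = \int_{r_1}^{r_2} e^{-\mu(\bar B_1(y_i,t))}\, d\death(t)$ and compare the increments over $[r_1, \tfrac{r_1+r_2}{2}]$ versus $[\tfrac{r_1+r_2}{2}, r_2]$ using monotonicity of the integrand and of $d\death$ under a reflection/translation argument — strictness comes from strict concavity of $\death$.)

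Finally I would assemble the pieces. For $x_1 \neq x_2$ in $S_{j,k}$ and $t\in(0,1)$, each $g_i(tx_1 + (1-t)x_2) \ge t g_i(x_1) + (1-t) g_i(x_2)$ by concavity of $\Phi_i$ composed with the affine $\ell_i$; summing with positive weights $b\lambda_i$ and adding the constant $c$ gives $h_\mu(tx_1+(1-t)x_2) \ge t h_\mu(x_1) + (1-t) h_\mu(x_2)$. For the \emph{strict} inequality it is enough that \emph{at least one} index $i$ has $\ell_i$ non-constant on the segment $[x_1,x_2]$, i.e. $(\sigma_i,\tau_i)\cdot(x_1 - x_2)\neq 0$; since $x_1\neq x_2$, any single term with, say, $\sigma_i = 1$ and a nonzero first coordinate difference works — and in fact for every pair of distinct points in $\mathbb R^2$ at least one of the $2n$ functionals $\pm(a-a_i), \pm(b-b_i)$ separates them as long as $n\ge 1$, which holds here. (If one wants to be fully careful, note $x_1 - x_2$ has a nonzero coordinate, say the first; then $\ell_i$ is non-constant along the segment for every $i$, so in fact \emph{all} terms are strictly concave along it.) Combined with $\lambda_i \ge 0$ and $\sum\lambda_i = 1$ (so at least one $\lambda_i > 0$), this yields the strict inequality. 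A short remark handles boundary points of $S_{j,k}$: there $\|x-y_i\|_1$ is still affine by continuity (it is affine on the closed rectangle, since the sign conventions extend to the closure), so the argument is unchanged.

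The main obstacle I anticipate is the one-dimensional strict concavity of $\Phi_i$ when $\death$ is merely continuous, increasing, and strictly concave (not $C^1$) and $\mu$ is an arbitrary finite measure (so $\mu(\bar B_1(y_i,\cdot))$ may be discontinuous). The robust route is to avoid derivatives entirely and work with the increment representation $\Phi_i(r_2)-\Phi_i(r_1)=\int_{(r_1,r_2]} e^{-\mu(\bar B_1(y_i,t))}\,d\death(t)$, establishing that the increment over the left half-interval strictly exceeds that over the right half-interval by pairing up the Stieltjes mass of $d\death$ under the reflection $t \mapsto r_1 + r_2 - t$ (which strictly decreases total $\death$-mass on the left vs. right by strict concavity) and using that $t\mapsto e^{-\mu(\bar B_1(y_i,t))}$ is nonincreasing. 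Everything else is bookkeeping about the grid geometry.
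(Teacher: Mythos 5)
Your overall route is the same as the paper's: both proofs rest on the observation that the sign patterns of $x-y_i$ are fixed on $S_{j,k}$, so $\|x-y_i\|_1$ is affine there, and then reduce to strict concavity of the one-dimensional map $r\mapsto\int_0^r e^{-\mu(\bar B_1(y_i,s))}\,d\death(s)$, which follows from strict concavity of $\death$ together with monotonicity of $s\mapsto\mu(\bar B_1(y_i,s))$ (the paper does this per term via the mean value theorem for the Stieltjes integral; your increment/endpoint-bound version is fine and needs no differentiability of $\death$ and no reflection pairing — just bound the integrand on $[l_{i,1},m_i]$ below and on $[m_i,l_{i,2}]$ above by its value at $m_i$).

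The genuine problem is your handling of strictness. The parenthetical claim ``$x_1-x_2$ has a nonzero first coordinate, hence $\ell_i$ is non-constant along the segment for every $i$'' is false: with $\ell_i(x)=\sigma_i(a-a_i)+\tau_i(b-b_i)$ and $\sigma_i=\tau_i$, the direction $x_1-x_2\propto(1,-1)$ gives $\ell_i(x_1)=\ell_i(x_2)$, so that term contributes equality, not strict inequality; likewise the remark about the $2n$ coordinate functionals $\pm(a-a_i),\pm(b-b_i)$ separating distinct points is beside the point, since what must vary is the specific combination $\ell_i$ for some index with $\lambda_i>0$. In fact the needed condition can fail outright: take $n=2$, $y_1=(0,0)$, $y_2=(1,1)$, so the single rectangle is $[0,1]^2$ and both demand points have $\sigma_i=\tau_i$; along any anti-diagonal segment every $\ell_i$ is constant, $h_\mu$ is constant there, and the inequality is an equality. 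So strictness holds exactly when some $i$ with $\lambda_i>0$ satisfies $\sigma_i(a_1-a_2)+\tau_i(b_1-b_2)\neq 0$, and your argument that this is automatic cannot be repaired in general. (To be fair, the paper's proof elides the same issue by assuming ``without loss of generality $l_{i,1}<l_{i,2}$'' for each $i$; your write-up differs only in that it makes an explicit — and incorrect — claim where the paper is silent.) Everything else in your proposal matches the paper's argument.
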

\begin{proof}{Proof.} 
Since \( x_1, x_2 \in S_{j,k} \), for all \( y_i \) with \( i=1,\dots,n \), the vectors \( x_1 - y_i \) and \( x_2 - y_i \) have the same sign patterns. This implies that for all \( i=1,\dots,n \) and \( 0 \leq t \leq 1 \), we have
\[
\| t x_1 + (1 - t) x_2 - y_i \|_1 = t \| x_1 - y_i \|_1 + (1 - t) \| x_2 - y_i \|_1.
\]
Define $l_{i,1} := \| x_1 - y_i \|_1$, $l_{i,2} := \| x_2 - y_i \|_1$, and $m_i := t l_{i,1} + (1 - t) l_{i,2}$. After some computation we obtain
\begin{align*}
    &h_{\mu}(t x_1 + (1 - t) x_2) - t h_{\mu}(x_1) - (1 - t) h_{\mu}(x_2) \\
    &\qquad = b\sum_{i=1}^{n} \lambda_i \underbrace{\left( t\int_{l_{i,1}}^{m_i} \exp(-\mu(\bar{B}_1(y_i,t))) d\death(t)  - (1 - t) \int_{m_i}^{l_{i,2}} \exp(-\mu(\bar{B}_1(y_i,t))) d\death(t) \right)}_{I_i} \\ & \qquad=: b\sum_{i=1}^{n} \lambda_i I_i.
\end{align*}
Assuming without loss of generality that \( l_{i,1} < l_{i,2} \), and applying the mean value theorem, we have
\begin{align*}
    I_i = t \exp(-\mu(\bar{B}_1(y_i, c_1))) (\death(m_i) - \death(l_{i,1})) - t \exp(-\mu(\bar{B}_1(y_i, c_2))) (\death(l_{i,2}) - \death(m_i)),
\end{align*}
where \( c_1 \in (l_{i,1}, m_i) \) and \( c_2 \in (m_i, l_{i,2}) \). Since \( \death \) is strictly concave, it follows that $\death(m_i) > t \death(l_{i,1}) + (1 - t) \death(l_{i,2})$. Rearranging terms, we obtain
\[
I_i > t(1 - t)(\death(l_{i,2}) - \death(l_{i,1})) \left( \exp(-\mu(\bar{B}_1(y_i, c_1))) - \exp(-\mu(\bar{B}_1(y_i, c_2))) \right).
\]
Since \( \exp(-\mu(\bar{B}_1(y_i,t))) \) is non-increasing with respect to \( t \), \( c_1 < c_2 \), and \( \death \) is an increasing function, it follows that \( I_i > 0 \), and so 
$h_{\mu}(t x_1 + (1 - t) x_2) > t h_{\mu}(x_1) + (1 - t) h_{\mu}(x_2),$
as required. \hfill \Halmos
\end{proof}

Lemma~\ref{lemma:piecewiseConcave} is important for computation. Since a strictly concave function achieves its minimum at the extreme points of a convex set on which it is defined, the minimizers of \( h_{\mu}(x) \) in~\eqref{eq:influenceL1} must lie among the vertices of \( S_{j,k} \), giving the following result. 
\begin{lemma}\label{thm:vertex_min}
    For each rectangle \( S_{j,k} \), the minimizers of \( h_{\mu}(x) \) satisfy
    \begin{equation}\label{eq:optInfluence}
        \argmin_{x \in S_{j,k}} h_{\mu}(x) \subseteq V_{j,k}, \quad \forall \mu\in\mathcal{M}^+(\mathrm{cvx}(\mathcal{S}), b),
    \end{equation}
    where \( V_{j,k} \) denotes the set of vertices of \( S_{j,k} \):
    \[
    V_{j,k} = \{(a_{(j)}, b_{(k)}), (a_{(j+1)}, b_{(k)}), (a_{(j)}, b_{(k+1)}), (a_{(j+1)}, b_{(k+1)})\}.
    \]
\end{lemma}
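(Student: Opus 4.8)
The plan is to deduce the result directly from the strict concavity established in Lemma~\ref{lemma:piecewiseConcave}, together with the elementary fact that a strictly concave function on a compact convex set attains its minimum only at extreme points. First I would record that each rectangle $S_{j,k}$ defined in~\eqref{eq:rectangle} is a compact convex subset of $\mathbb{R}^2$ whose set of extreme points is exactly $V_{j,k}$ (in the degenerate cases where $a_{(j)}=a_{(j+1)}$ or $b_{(k)}=b_{(k+1)}$, which arise from ties among the coordinates of the demand points, $S_{j,k}$ collapses to a segment or a single point and the inclusion is immediate, so we may assume $S_{j,k}$ is a nondegenerate rectangle). Since the influence function $h_{\mu}$ in~\eqref{eq:influenceL1} is continuous in $x$ (the map $x \mapsto \|x-y_i\|_1$ is continuous and $\death$ is continuous), the minimum $\min_{x \in S_{j,k}} h_{\mu}(x)$ is attained, so $\argmin_{x \in S_{j,k}} h_{\mu}(x)$ is nonempty.

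Next I would argue by contradiction. Suppose $x_0 \in \argmin_{x \in S_{j,k}} h_{\mu}(x)$ but $x_0 \notin V_{j,k}$. Then $x_0$ is not an extreme point of $S_{j,k}$: any non-vertex point of a rectangle lies either in the relative interior of an edge or in the interior, and in both cases it can be written as $x_0 = t x_1 + (1-t)x_2$ for some $t \in (0,1)$ and distinct points $x_1, x_2 \in S_{j,k}$. Applying Lemma~\ref{lemma:piecewiseConcave} then gives
\[
h_{\mu}(x_0) > t\,h_{\mu}(x_1) + (1-t)\,h_{\mu}(x_2) \geq \min\{h_{\mu}(x_1), h_{\mu}(x_2)\},
\]
contradicting the minimality of $x_0$. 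Hence every minimizer of $h_{\mu}$ over $S_{j,k}$ lies in $V_{j,k}$, i.e. $\argmin_{x \in S_{j,k}} h_{\mu}(x) \subseteq V_{j,k}$; since $\mu \in \mathcal{M}^+(\mathrm{cvx}(\mathcal{S}),b)$ was arbitrary, the claim follows.

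I do not anticipate a genuine obstacle: the substance of the argument is already contained in Lemma~\ref{lemma:piecewiseConcave}, and the only care required is the bookkeeping around degenerate rectangles and the routine observation that the extreme points of a rectangle are precisely its four corners. Everything else is an immediate consequence of strict concavity and compactness.
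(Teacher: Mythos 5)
Your proposal is correct and follows essentially the same route as the paper, which derives the lemma directly from Lemma~\ref{lemma:piecewiseConcave} via the fact that a strictly concave function on a compact convex set attains its minimum only at extreme points (the rectangle's vertices). Your added bookkeeping on degenerate rectangles and the explicit contradiction step are just a more detailed write-up of the paper's one-line argument.
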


By Lemma~\ref{thm:vertex_min}, for any $\mu\in\mathcal{M}^+(\mathrm{cvx}(\mathcal{S}), b)$ the minimizers of \( h_{\mu}(x) \) within each rectangle \( S_{j,k} \) lie at its vertices. Define the set of all such vertices, which correspond to the grid points, as  
\begin{equation}\label{eq:defV}
    V := \{ (a_{(j)}, b_{(k)}) \mid j, k = 1, \dots, n \}.
\end{equation}
Applying the optimality condition in Theorem~\ref{thm:optCondition}, we arrive at the following result.
\begin{theorem}\label{thm:supportSubset}
    The support of an optimal solution, \( \mu^* \), to problem~\eqref{optL1} is contained in the set of grid vertices, that is,
    \begin{equation}\label{eq:optSuppL1}
        \mathrm{supp}(\mu^*) \subseteq V.
    \end{equation}
\end{theorem}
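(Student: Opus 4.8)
The plan is to read the result off the first-order optimality characterization once it is combined with the piecewise strict concavity of the influence function. First I would record what optimality buys us: by Theorem~\ref{thm:optCondition}, a measure $\mu^*$ solving~\eqref{optL1} satisfies $h_{\mu^*}(x)\ge 0$ for every $x\in\mathrm{cvx}(\mathcal{S})$, while Lemma~\ref{lem:supportopt} gives $\mathrm{supp}(\mu^*)\subseteq\{x\in\mathrm{cvx}(\mathcal{S}):h_{\mu^*}(x)=0\}$. Since $\mu^*(\mathrm{cvx}(\mathcal{S}))=b>0$ the support is nonempty, so $\min_{x\in\mathrm{cvx}(\mathcal{S})}h_{\mu^*}(x)=0$ and every point of $\mathrm{supp}(\mu^*)$ is in fact a global minimizer of $h_{\mu^*}$ over $\mathrm{cvx}(\mathcal{S})$. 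It therefore suffices to prove that every such global minimizer lies in $V$.

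Second, I would localize to a grid cell. The hull $\mathrm{cvx}(\mathcal{S})$ sits inside the bounding box $[a_{(1)},a_{(n)}]\times[b_{(1)},b_{(n)}]=\bigcup_{j,k=1}^{n-1}S_{j,k}$, so a minimizer $x_0$ lies in some rectangle $S_{j,k}$ and hence also minimizes $h_{\mu^*}$ over the compact convex set $S_{j,k}\cap\mathrm{cvx}(\mathcal{S})$. By Lemma~\ref{lemma:piecewiseConcave}, $h_{\mu^*}$ is strictly concave on $S_{j,k}$, and a strictly concave function attains its minimum over a compact convex set only at an extreme point of that set; thus $x_0$ is an extreme point of $S_{j,k}\cap\mathrm{cvx}(\mathcal{S})$. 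In the favorable case in which $\mathrm{cvx}(\mathcal{S})$ is itself a union of grid cells — in particular whenever the demand points occupy a product grid, as in the square example — one has $S_{j,k}\subseteq\mathrm{cvx}(\mathcal{S})$, the extreme points of $S_{j,k}$ are exactly its four vertices $V_{j,k}$, and Lemma~\ref{thm:vertex_min} at once yields $x_0\in V_{j,k}\subseteq V$; combined with the optimality condition this already proves $\mathrm{supp}(\mu^*)\subseteq V$.

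Third, and this is where I expect the real work to be, I would have to control the extreme points of $S_{j,k}\cap\mathrm{cvx}(\mathcal{S})$ that are \emph{not} vertices of $S_{j,k}$. Such a point is either a vertex of the polygon $\mathrm{cvx}(\mathcal{S})$ — that is, a demand point $y_i$, whose coordinates are order statistics of the $\{a_i\}$ and $\{b_i\}$ and hence already lie in $V$ by~\eqref{eq:defV} — or a point where a grid line (an edge of $S_{j,k}$) crosses an edge of $\mathrm{cvx}(\mathcal{S})$, and this last possibility is the main obstacle. The hard part will be to exclude such crossing points whose ``free'' coordinate is not an order statistic. The plan there is to restrict $h_{\mu^*}$ to the grid line carrying that edge of $S_{j,k}$: along the vertical line $\{x_1=a_{(j)}\}$ each $|x_2-b_i|$ is affine on every interval $[b_{(k)},b_{(k+1)}]$ between consecutive order statistics, so $h_{\mu^*}$ is strictly concave there and admits no minimizer in the interior of such an interval; a minimizer on the chord $\{x_1=a_{(j)}\}\cap\mathrm{cvx}(\mathcal{S})$ can then only be at a height $b_{(k)}$ (a point of $V$) or at an endpoint of the chord, and the symmetric argument along horizontal grid lines, together with the fact that $x_0\in\mathrm{supp}(\mu^*)\subseteq\mathrm{cvx}(\mathcal{S})$, rules out the leftover cases. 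Assembling the three cases gives $x_0\in V$, and since $x_0\in\mathrm{supp}(\mu^*)$ was arbitrary, $\mathrm{supp}(\mu^*)\subseteq V$.
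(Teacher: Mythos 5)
Your opening chain is exactly the paper's argument: Theorem~\ref{thm:optCondition} plus Lemma~\ref{lem:supportopt} give $\mathrm{supp}(\mu^*)\subseteq\{x\in\mathrm{cvx}(\mathcal{S}):h_{\mu^*}(x)=0\}$, which is the set of global minimizers of $h_{\mu^*}$ over $\mathrm{cvx}(\mathcal{S})$, and the paper then invokes Lemma~\ref{thm:vertex_min} cell by cell to place these minimizers in $V$ --- it performs no analysis of the extreme points of $S_{j,k}\cap\mathrm{cvx}(\mathcal{S})$ at all. So you are being more careful than the paper on one point, and correctly so: Lemma~\ref{thm:vertex_min} controls minimizers over a \emph{full} rectangle $S_{j,k}$, whereas a global minimizer over $\mathrm{cvx}(\mathcal{S})$ is a priori only a minimizer over $S_{j,k}\cap\mathrm{cvx}(\mathcal{S})$, a weaker statement whenever the cell is not contained in the hull. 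Your cases (i) (cell vertices) and (ii) (demand points, whose coordinates are order statistics and hence lie in $V$) are handled correctly.

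The gap is that your third case is opened but not closed. A crossing point $x_0=(a_{(j)},c)$ of a hull edge with a vertical grid line, with $c$ not equal to any $b_{(k)}$, is precisely an \emph{endpoint} of the chord $\{x_1=a_{(j)}\}\cap\mathrm{cvx}(\mathcal{S})$, so your restriction of $h_{\mu^*}$ to that grid line (minimizers only at grid heights or chord endpoints) cannot exclude it; and since $x_0$ lies on no horizontal grid line, the ``symmetric argument along horizontal grid lines'' says nothing about it either, while ``$x_0\in\mathrm{supp}(\mu^*)\subseteq\mathrm{cvx}(\mathcal{S})$'' is already assumed and adds nothing. Note also that concavity alone cannot dismiss such a point: along the hull edge, $h_{\mu^*}$ is only piecewise strictly concave with a convex kink exactly where the edge crosses the vertical line through the demand point generating $a_{(j)}$ (there the $L_1$ distance to that demand point has a local minimum), so $h_{\mu^*}$ can have a local minimum at $x_0$ along the edge. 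To finish along your lines you need an extra ingredient, for example showing that $h_{\mu^*}\ge 0$ on the whole rectangle $S_{j,k}$ and not merely on $\mathrm{cvx}(\mathcal{S})$ --- say, by producing for each $x\in S_{j,k}$ a point $x'\in\mathrm{cvx}(\mathcal{S})$ with $\|x'-y_i\|_1\le\|x-y_i\|_1$ for all $i$, since $h_{\mu^*}$ is nondecreasing in each of these distances. Then $h_{\mu^*}(x_0)=0$ would make $x_0$ a minimizer over the full cell and Lemma~\ref{thm:vertex_min} would apply; this is exactly the reading under which the paper's one-line deduction (and the theorem in the non-rectangular-hull case) is justified, and it is trivially available when $\mathrm{cvx}(\mathcal{S})$ is a union of grid cells, the setting of your ``favorable case.''
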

\begin{proof}{Proof.}
Since \( V \) defined in~\eqref{eq:defV} is the set of all vertices, Theorem~\ref{thm:vertex_min} directly implies that the minimizers of the influence function at the optimal measure \( \mu^* \) satisfy
\begin{equation*}
    \argmin_{x\in\mathrm{cvx}(\mathcal{S})} h_{\mu^*}(x) \subseteq V.
\end{equation*}
Moreover, by the optimality condition in Theorem~\ref{thm:optCondition}, we have \( h_{\mu^*}(x) \geq 0 \) for all \( x \in \mathrm{cvx}(\mathcal{S}) \), with the minimum value attained at zero. Applying Lemma~\ref{lem:supportopt}, it follows that
\begin{equation*}
    \mathrm{supp}(\mu^*) \subseteq \left\{ x \in \mathrm{cvx}(\mathcal{S}) \mid h_{\mu^*}(x) = 0 \right\} = \argmin_{x \in \mathrm{cvx}(\mathcal{S})} h_{\mu^*}(x).
\end{equation*} Thus, equation~\eqref{eq:optSuppL1} follows. \hfill \Halmos
\end{proof}

Theorems~\ref{thm:vertex_min} and \ref{thm:supportSubset} are especially interesting because they reduce the search for the optimal support of $\mu^*$ to a finite candidate set $V$. The size of $V$ is $O(n^2)$, where $n$ is the cardinality of support of the demand random variable $\eta$. (The exponent $2$ on $n$ arises from the fact that the OHCA emergency response problem is stated in $\mathbb{R}^2$.) This suggests a simple ``look-up'' procedure for solving~\eqref{optL1} whereby the support of $\mu^*$ is deduced by observing $h_{\mu}$ at each of the points in $V$. 
A corresponding look-up procedure for more general demands $\eta$ is also evident --- sample from $\eta$ first and form the grid, and then implement the look-up procedure to estimate the support of the solution $\mu^*$.   

\section{Concluding Remarks}\label{sec:concluding}
We have developed and explored the use of a fully corrective Frank-Wolfe (fc-FW) algorithm for optimizing the volunteer measure in CFR systems that aim to improve survival rates from OHCA. We derived results that enhance the solution process, e.g., by establishing structural properties of the objective function including its von Mises differentiability, and we obtained an expression for the influence function. We developed additional convergence guarantees for the fc-FW algorithm that apply even when not globally minimizing the influence function in each iteration.

Study of stylized problems indicates that solutions are not easily identified in advance. Rather, they have complex structure that is plausible upon viewing the solutions, yet far from completely natural. Certainly, they do not simply mirror the OHCA probability distribution.

We demonstrate that computation is possible at city scale through computations for the city of Auckland, New Zealand.

Finally, we showed that if volunteers travel according to the Manhattan metric, and if the demand distribution is discrete, then the optimal volunteer measure concentrates on a grid. This observation should permit the development of specialized algorithms in such cases.

Future research within the sphere of emergency services might explore the impact on optimal volunteer measures of complex volunteer dispatch policies that require more than the closest volunteer to respond. Volunteer response is also being considered for emergencies beyond OHCA. How, then, might one optimize volunteer measures in that more complex medical environment? And in situations where volunteers will travel via a mixture of paths that are not easily classified as either $L_1$ travel or $L_2$ travel, what then might we expect in optimal solutions?

Future research within the sphere of measure optimization might develop dual bounds that can complement the fc-FW algorithm to, e.g., provide stopping rules and certificates of (near) optimality. One might also develop methods and complexity analyses for non-convex objective functions that arise in more complex applications. It is also of interest to explore optimization applications where artificial discretization has been the traditional first step, but only for computational convenience. With new, measure-optimization techniques, perhaps artificial discretization can be avoided or reduced.

%
%
%

\clearpage

\begin{APPENDICES}

\section{Proof of Theorem~\ref{thm:feasibleregion}}\label{sec:prooffeasibleregion}
The proof of the first assertion is straightforward. Let's now prove the second assertion of the theorem.  

Let $C(\mbox{cvx}(\mathcal{S}), \mathbb{R})$ be the space on continuous real-valued functions on $\mbox{cvx}(\mathcal{S})$ equipped with the sup-norm $$\|f\|_{\infty} := \sup \{|f(x)|: x \in \mbox{cvx}(\mathcal{S})\}.$$ We know that the space $C(\mbox{cvx}(\mathcal{S}), \mathbb{R})$ is \emph{separable}, that is, there exists a countable dense set $\{f_j, j \geq 1\}$ in  $C(\mbox{cvx}(\mathcal{S}), \mathbb{R})$. 

Suppose $\{\mu_n, n \geq 1\}$ is a sequence of probability measures on the compact metric space $(\mbox{cvx}(\mathcal{S}), \| \cdot\|)$. Denote $\mu(f) : = \int f \, d\mu$ for $\mu \in \mathcal{M}^+(\mbox{cvx}(\mathcal{S}),b), f \in C(\mbox{cvx}(\mathcal{S}),\mathbb{R})$ and consider the real-valued sequence $\{\mu_n(f_1), n\geq 1\}$. Since $\sup_{n} \mu_n(f_1) \leq b\|f_1\|_{\infty},$ we know from Bolzano-Weierstrass that there exists a convergent sub-sequence $\{\mu_n^{(1)}(f_1), n \geq 1\}$. Next consider the real-valued sequence $\{\mu_n^{(1)}(f_2), n \geq 1\}.$ Like before, since $\sup_{n} \mu_n(f_2) \leq b\|f_2\|_{\infty},$ we can extract a convergent sub-sequence $\{\mu_n^{(2)}(f_2), n \geq 1\}$. By repeating this procedure, we get a sequence $\{\mu_n^{(i)}, n \geq 1\}$ of nested sequences such that the real-valued sequence $\{\mu_n^{(i)}(f_j), n \geq 1\}$ is convergent for all $j \leq i$. In particular, we see that the real-valued sequence $\{\mu^{(n)}_n(f_j), n \geq 1\}$ converges for each $j \geq 1$. 

Let's now prove that for each $f \in C(\mbox{cvx}(\mathcal{S}),\mathbb{R}),$ the sequence $\{\mu^{(n)}_n(f), n \geq 1\}$ is Cauchy. Since $\{f_j, j \geq 1\}$ is dense in $C(\mbox{cvx}(\mathcal{S}),\mathbb{R}),$ given $\epsilon >0$, we can find $j(\epsilon)$ so that \begin{equation}\label{dense} \|f - f_{j(\epsilon)}\|_{\infty} \leq \epsilon. \end{equation} And, since $\{\mu^{(n)}_n(f_{j(\epsilon)}), n \geq 1\}$ converges (from prior arguments), we can find $N(\epsilon)$ so that for all $n,m \geq N(\epsilon)$, \begin{equation}\label{cauchy} |\mu_n^{(n)}(f_{j(\epsilon)}) - \mu_n^{(n)}(f_{j(\epsilon)}) | \leq \epsilon.\end{equation}
Therefore, using~\eqref{dense} and~\eqref{cauchy}, we see that for all $n,m \geq N(\epsilon)$, \begin{align}\label{cauchymeasureseq}
| \mu_n^{(n)}(f) - \mu_m^{(m)}(f) | & \leq \left| \mu_n^{(n)}(f) - \mu_n^{(n)}(f_{j(\epsilon)}) \right | + \left| \mu_n^{(n)}(f_{j(\epsilon)}) - \mu_m^{(m)}(f_{j(\epsilon)}) \right | + \left | \mu_m^{(m)}(f_{j(\epsilon)}) - \mu_m^{(m)}(f) \right | \nonumber \\
& \leq 3\epsilon,\end{align}
implying that the real-valued sequence $\{\mu_n^{(n)}(f), n \geq 1\}$ is convergent. Moreover, suppose we write \begin{equation}\label{wdef}w(f) := \frac{1}{b}\,\lim_{n \to \infty} \mu_n^{(n)}(f).\end{equation}
Then, applying the Riesz representation theorem, there exists $\mu \in \mathcal{M}^+(\mbox{cvx}(\mathcal{S}),b)$ such that \begin{equation*} \mu^{(n)}_n(f) := \int f \, d\mu^{(n)}_n \to \int f \, d\mu =: \mu(f),\end{equation*} allowing us to conclude that $\{\mu_n^{(n)}, n \geq 1\}$ weak$^*$ converges, and $\mathcal{M}^+(\mbox{cvx}(\mathcal{S}),b)$ is weak$^*$ compact. 
\section{Smoothness}
Important for optimization, we show that the objective \( J \) in the emergency response problem~\eqref{restrictedopt} is \((2b+1)\)-smooth over the domain \(\mathcal{M}^+(\mathrm{cvx}(\mathcal{S}), b)\).

\begin{lemma}[Smoothness] The functional \( J \) satisfies the Lipschitz-type condition: \begin{align}\label{smoothness} \sup_{x \in \mathrm{cvx}(\mathcal{S})} |h_{\mu_1}(x)-h_{\mu_2}(x) |  \quad \leq \quad (2b+1) \|\mu_1 - \mu_2\|.
\end{align}
\end{lemma}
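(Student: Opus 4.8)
The plan is to reduce the claimed Lipschitz-type bound to a pointwise estimate on the integrand of the influence function and then integrate it out. Starting from the closed form for $h_\mu$ in Theorem~\ref{thm:vonMises}, I would write
\[
h_{\mu_1}(x) - h_{\mu_2}(x) = \int_{\mathcal{S}} \eta(\mathrm{d}y) \int_0^{\infty} \Bigl[ (F_1 - c)\,e^{-F_1} - (F_2 - c)\,e^{-F_2} \Bigr] \, \mathrm{d}\death(t),
\]
where I abbreviate $F_i := \mu_i(\bar{B}(y,t)) \in [0,b]$ for $i = 1,2$ and $c := b\,\mathbb{I}_{[\|y-x\|,\infty)}(t) \in [0,b]$. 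The key structural observation is that $c$ is independent of the measure, so the $\mu$-dependence of the integrand enters only through $F_1$, $F_2$ and the exponential factors $e^{-F_1}$, $e^{-F_2}$.

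Next I would establish the pointwise bound, uniform in $x$, $y$, and $t$,
\[
\bigl| (F_1 - c)\,e^{-F_1} - (F_2 - c)\,e^{-F_2} \bigr| \;\le\; (2b+1)\,|F_1 - F_2|.
\]
This follows from purely elementary facts: $s \mapsto e^{-s}$ is $1$-Lipschitz on $[0,\infty)$, $F_1 \le b$, and $c \le b$. Splitting the bracket into an ``$s e^{-s}$'' piece and a ``$c\,e^{-s}$'' piece and applying the triangle inequality together with the product-rule decomposition $F_1 e^{-F_1} - F_2 e^{-F_2} = F_1(e^{-F_1}-e^{-F_2}) + e^{-F_2}(F_1 - F_2)$ yields $|F_1 e^{-F_1} - F_2 e^{-F_2}| \le (b+1)|F_1 - F_2|$ and $|c\,e^{-F_1} - c\,e^{-F_2}| \le b\,|F_1 - F_2|$; summing gives the constant $2b+1$.

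I would then bound $|F_1 - F_2| = |\mu_1(\bar{B}(y,t)) - \mu_2(\bar{B}(y,t))| \le \|\mu_1 - \mu_2\|$, using that $\bar{B}(y,t) = B(y,t) \cap \mathcal{S}$ is a Borel subset of $\mathrm{cvx}(\mathcal{S})$ and that the total-variation quantity in~\eqref{totvar} is the supremum over exactly such sets; crucially this bound is uniform in $y$, $t$, and $x$. Plugging this into the double integral and using $\int_{\mathcal{S}} \eta(\mathrm{d}y) = 1$ and $\int_0^{\infty} \mathrm{d}\death(t) = \lim_{t\to\infty}\death(t) - \death(0) \le 1$ (immediate since $\death$ is increasing with range contained in $[0,1]$) gives $|h_{\mu_1}(x) - h_{\mu_2}(x)| \le (2b+1)\|\mu_1 - \mu_2\|$ for each fixed $x$; taking the supremum over $x \in \mathrm{cvx}(\mathcal{S})$ finishes the proof.

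There is no deep obstacle here: the entire content lies in selecting elementary bounds so that the constant is exactly $2b+1$ and in verifying that every estimate is uniform in $(x,y,t)$ so that the final integration against $\eta$ and $\mathrm{d}\death$ is harmless. The only two points that need a moment's care are the uniformity in $x$ (which works because $x$ affects the integrand only through the measure-independent indicator $c$, whose contribution is still controlled by $|F_1 - F_2|$) and the total-mass bound $\int_0^{\infty} \mathrm{d}\death(t) \le 1$, which follows directly from the hypotheses on $\death$.
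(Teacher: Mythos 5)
Your proof is correct and follows essentially the same route as the paper's: write out $h_{\mu_1}(x)-h_{\mu_2}(x)$ from the closed form, split the integrand by the triangle inequality, use the $1$-Lipschitzness of $s\mapsto e^{-s}$ together with $\mu_i(\bar{B}(y,t))\le b$ to get a pointwise bound of the form $(2b+1)\,|(\mu_1-\mu_2)(\bar{B}(y,t))|\le (2b+1)\|\mu_1-\mu_2\|$, and then integrate against $\eta$ and $\mathrm{d}\death$. The only difference is cosmetic: you split off the $c\,e^{-s}$ term and use the product-rule decomposition of $s e^{-s}$ (getting $(b+1)+b$), whereas the paper groups the terms as $(F_1-c)(e^{-F_1}-e^{-F_2})+(F_1-F_2)e^{-F_2}$ and bounds $|F_1-c|\le 2b$; both yield the constant $2b+1$.
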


\begin{proof}{Proof.} We aim to bound
\begin{align} \label{normDiffUb}
    \sup_{x \in \mathrm{cvx}(\mathcal{S})} |h_{\mu_1}(x) - h_{\mu_2}(x)| 
    &= \sup_{x \in \mathrm{cvx}(\mathcal{S})} \Bigg| \int_{\mathcal{S}} \eta(dy) \int_0^{\infty}  \Big[
    \left( \mu_1\left(\bar{B}(y,t)\right) - b\, \mathbb{I}_{[\|y - x\|, \infty)}(t) \right) \nonumber \\
    & \quad \times \left( e^{-\mu_1\left(\bar{B}(y,t)\right)} - e^{-\mu_2\left(\bar{B}(y,t)\right)} \right) 
    + \left( \mu_1\left(\bar{B}(y,t)\right) - \mu_2\left(\bar{B}(y,t)\right) \right) 
    e^{-\mu_2\left(\bar{B}(y,t)\right)} \Big] \, d\death(t) \Bigg|.
\end{align}

For fixed \( y \in \mathcal{S} \), \( t > 0 \), and without loss of generality assuming \( \mu_1\left(\bar{B}(y,t)\right) \geq \mu_2\left(\bar{B}(y,t)\right) \), we have
\begin{align}
    \left| e^{-\mu_1\left(\bar{B}(y,t)\right)} - e^{-\mu_2\left(\bar{B}(y,t)\right)} \right| 
    = e^{-\mu_1\left(\bar{B}(y,t)\right)} \left(1 - e^{-(\mu_1 - \mu_2)\left(\bar{B}(y,t)\right)} \right) \leq (\mu_1 - \mu_2)\left(\bar{B}(y,t)\right) \leq \|\mu_1 - \mu_2\|.
\end{align}

Also, since \( \left| \mu_1\left(\bar{B}(y,t)\right) - b\, \mathbb{I}_{[\|y - x\|, \infty)}(t) \right| \leq 2b \), the integrand in \eqref{normDiffUb} is bounded by:
\[
2b \cdot \left| e^{-\mu_1\left(\bar{B}(y,t)\right)} - e^{-\mu_2\left(\bar{B}(y,t)\right)} \right| + \left| \mu_1\left(\bar{B}(y,t)\right) - \mu_2\left(\bar{B}(y,t)\right) \right| \cdot e^{-\mu_2\left(\bar{B}(y,t)\right)} \leq (2b + 1) \|\mu_1 - \mu_2\|.
\]

Plugging this bound into \eqref{normDiffUb}, and using
$\int_{\mathcal{S}} \eta(dy) \int_0^\infty d\death(t) = 1,
$ we conclude that
\[
\sup_{x \in \mathrm{cvx}(\mathcal{S})} |h_{\mu_1}(x) - h_{\mu_2}(x)| \leq (2b+1) \|\mu_1 - \mu_2\|,
\]
that is, the functional \( J \) satisfies the Lipschitz condition~\eqref{smoothness}.
\hfill \Halmos
\end{proof}

As a consequence of \( L \)-smoothness, we immediately obtain the following inequality for $J$ (see~\cite{yu2024det}).

\begin{lemma}[Smooth Functional Inequality~\cite{yu2024det}] For any $\mu,\tilde{\mu} \in \mathcal{M}^+(\mathrm{cvx}(\mathcal{S}), b)$, $J$ satisfies \begin{equation}\label{descentlemmagen} 0 \leq J(\tilde{\mu}) - \left( J(\mu) + J'_{\mu}(\tilde{\mu}-\mu) \right)  \leq  \frac{L}{2b}\|\mu-\tilde{\mu}\|^2, \end{equation}
where $L=2b+1$.
\end{lemma}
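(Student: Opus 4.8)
The stated bound is a ``descent lemma'': the left inequality reflects convexity of $J$, and the right inequality is the second-order Taylor estimate that accompanies $L$-smoothness. While it can be obtained from the Smoothness lemma by the textbook integration-along-the-segment argument, I would prove it directly from the closed forms in Theorem~\ref{thm:vonMises}, which keeps both bounds transparent and avoids any bookkeeping of total-variation constants.

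Here is the plan. Fix $\mu,\tilde\mu\in\mathcal{M}^+(\mathrm{cvx}(\mathcal{S}),b)$ and, for each $(y,t)\in\mathcal{S}\times[0,\infty)$, write $a:=\mu(\bar B(y,t))$ and $c:=\tilde\mu(\bar B(y,t))$, with $\bar B(y,t)=B(y,t)\cap\mathcal{S}$. Using the form of $J$ in~\eqref{objexp} and the expression for the von Mises derivative in~\eqref{vonMisesexp} (specialized to $\nu=\tilde\mu$), the quantity $J(\tilde\mu)-\big(J(\mu)+J'_\mu(\tilde\mu-\mu)\big)$ equals
\begin{equation*}
\int_{\mathcal{S}}\eta(\mathrm{d}y)\int_0^\infty\Big(e^{-c}-e^{-a}-e^{-a}(a-c)\Big)\,\mathrm{d}\death(t) .
\end{equation*}
The key observation is that the integrand is exactly the first-order Taylor remainder $R_a(c):=\psi(c)-\psi(a)-\psi'(a)(c-a)$ of the scalar map $\psi(u):=e^{-u}$ expanded about $a$. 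Taylor's theorem with Lagrange remainder then gives $R_a(c)=\tfrac12\psi''(\xi)(c-a)^2=\tfrac12 e^{-\xi}(c-a)^2$ for some $\xi$ between $a$ and $c$.

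Since $\mu(\mathcal{S})=\tilde\mu(\mathcal{S})=b$, both $a$ and $c$ lie in $[0,b]$, hence $\xi\in[0,b]$ and $0<e^{-\xi}\le1$; together with $|c-a|=|(\tilde\mu-\mu)(\bar B(y,t))|\le\|\tilde\mu-\mu\|$, this yields the pointwise two-sided bound $0\le R_a(c)\le\tfrac12\|\tilde\mu-\mu\|^2$. Integrating against $\eta(\mathrm{d}y)\,\mathrm{d}\death(t)$, which has total mass $\int_{\mathcal{S}}\eta(\mathrm{d}y)\int_0^\infty\mathrm{d}\death(t)=1$, gives $0\le J(\tilde\mu)-\big(J(\mu)+J'_\mu(\tilde\mu-\mu)\big)\le\tfrac12\|\tilde\mu-\mu\|^2$; and since $L=2b+1\ge b$ we have $\tfrac12\le\tfrac{L}{2b}$, so the claimed estimate follows a fortiori.

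I do not anticipate a serious obstacle along this route: the only ingredients are Theorem~\ref{thm:vonMises}, the range constraints $0\le a,c\le b$, the elementary inequality $|(\tilde\mu-\mu)(\bar B(y,t))|\le\|\tilde\mu-\mu\|$, and the convexity and smoothness of $u\mapsto e^{-u}$. The one place where care is genuinely needed is the alternative ``abstract'' proof --- set $\mu_\lambda:=(1-\lambda)\mu+\lambda\tilde\mu$ and $g(\lambda):=J(\mu_\lambda)$, observe $g$ is convex (Theorem~\ref{thm:conv}) and differentiable with $g'(\lambda)=J'_{\mu_\lambda}(\tilde\mu-\mu)=\tfrac1b\int h_{\mu_\lambda}\,\mathrm{d}(\tilde\mu-\mu)$ by linearity of the von Mises derivative, and integrate $g'(\lambda)-g'(0)$ over $[0,1]$ using $L$-smoothness together with $\|\mu_\lambda-\mu\|=\lambda\|\tilde\mu-\mu\|$ --- where one must reconcile the paper's convention $\|\cdot\|=\sup_A|\cdot(A)|$ with the dual pairing $\big|\int f\,\mathrm{d}(\tilde\mu-\mu)\big|$; the direct computation above avoids this issue entirely.
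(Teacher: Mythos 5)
Your argument is correct, and it is genuinely different from what the paper does: the paper does not prove this lemma at all, but imports it from~\cite{yu2024det} as the generic ``descent lemma'' consequence of $L$-smoothness, i.e.\ the abstract segment-integration argument applied to the influence function with the constant $L=2b+1$ coming from the Smoothness lemma. You instead exploit the specific exponential structure of $J$: writing $a=\mu(\bar B(y,t))$, $c=\tilde\mu(\bar B(y,t))$ and using the closed form of $J'_\mu$ from Theorem~\ref{thm:vonMises}, the quantity $J(\tilde\mu)-J(\mu)-J'_\mu(\tilde\mu-\mu)$ is exactly the integral of the Taylor remainder of $u\mapsto e^{-u}$ at $a$ evaluated at $c$, which is pointwise in $[0,\tfrac12(c-a)^2]$ since $e^{-\xi}\le 1$ for $\xi\ge 0$, and $|c-a|\le\|\tilde\mu-\mu\|$ because $\bar B(y,t)$ is a Borel subset of $\mathrm{cvx}(\mathcal{S})$. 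This buys you a self-contained proof that bypasses the Smoothness lemma entirely, avoids the duality bookkeeping between the paper's set-sup ``norm'' and the pairing $\int f\,\mathrm{d}(\tilde\mu-\mu)$ (which you rightly flag as the delicate point of the abstract route, where a factor of $2$ lurks), and in fact yields the sharper constant $\tfrac12$ in place of $\tfrac{L}{2b}=1+\tfrac1{2b}$, so the stated bound follows a fortiori; the trade-off is that the abstract route applies verbatim to any $L$-smooth functional, whereas yours is tied to the form~\eqref{objexp}. One cosmetic point: the total mass of $\eta(\mathrm{d}y)\,\mathrm{d}\death(t)$ is $\death(\infty)-\death(0)\le 1$ rather than exactly $1$ (the paper makes the same harmless simplification in its smoothness proof), which only strengthens your upper bound and requires no change to the argument.
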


\end{APPENDICES}

\clearpage

\section*{Acknowledgments.}
This work was partially supported by National Science Foundation grants CMMI-2035086, DMS-2230023 and OAC-2410950. We thank the editorial team for helpful reports that improved the content and exposition of the paper.


\bibliographystyle{informs2014} 
\bibliography{references,stochastic_optimization}

\begin{thebibliography}{33}
\providecommand{\natexlab}[1]{#1}
\providecommand{\url}[1]{\texttt{#1}}
\providecommand{\urlprefix}{URL }

\bibitem[{B{\ae}kgaard et~al.(2017)B{\ae}kgaard, Viereck, M{\o}ller, Ersb{\o}ll, Lippert, \protect\BIBand{} Folke}]{baekgaard2017effects}
B{\ae}kgaard JS, Viereck S, M{\o}ller TP, Ersb{\o}ll AK, Lippert F, Folke F (2017) The effects of public access defibrillation on survival after out-of-hospital cardiac arrest: a systematic review of observational studies. \emph{Circulation} 136(10):954--965.

\bibitem[{Boyd et~al.(2017)Boyd, Schiebinger, \protect\BIBand{} Recht}]{2017boygeorec}
Boyd N, Schiebinger G, Recht B (2017) The alternating descent conditional gradient method for sparse inverse problems. \emph{SIAM Journal on Optimization} 27(2).

\bibitem[{Bredies \protect\BIBand{} Pikkarainen(2013)}]{2013brepik}
Bredies K, Pikkarainen H (2013) Inverse problems in spaces of measures. \emph{ESAIM. Control, optimisation and calculus of variations} 19(1):190--218, ISSN 1292-8119.

\bibitem[{Bubeck et~al.(2015)}]{2015bub}
Bubeck S, et~al. (2015) Convex optimization: Algorithms and complexity. \emph{Foundations and Trends{\textregistered} in Machine Learning} 8(3-4):231--357.

\bibitem[{Chizat(2022)}]{2022bchi}
Chizat L (2022) Sparse optimization on measures with over-parameterized gradient descent. \emph{Mathematical programming} 194(1-2):487--532, ISSN 0025-5610.

\bibitem[{Chizat \protect\BIBand{} Bach(2018)}]{chizat2018global}
Chizat L, Bach F (2018) On the global convergence of gradient descent for over-parameterized models using optimal transport. \emph{Advances in neural information processing systems} 31.

\bibitem[{Chu et~al.(2019)Chu, Blanchet, \protect\BIBand{} Glynn}]{2019chublagly}
Chu C, Blanchet J, Glynn P (2019) Probability functional descent: A unifying perspective on {GAN}s, variational inference, and reinforcement learning.

\bibitem[{De~Maio et~al.(2003)De~Maio, Stiell, Wells, Spaite, Group et~al.}]{de2003optimal}
De~Maio VJ, Stiell IG, Wells GA, Spaite DW, Group OPALSS, et~al. (2003) Optimal defibrillation response intervals for maximum out-of-hospital cardiac arrest survival rates. \emph{Annals of emergency medicine} 42(2):242--250.

\bibitem[{Denoyelle et~al.(2019)Denoyelle, Duval, Peyr{\'e}, \protect\BIBand{} Soubies}]{denoyelle2019sliding}
Denoyelle Q, Duval V, Peyr{\'e} G, Soubies E (2019) The sliding {Frank--Wolfe} algorithm and its application to super-resolution microscopy. \emph{Inverse Problems} 36(1):014001.

\bibitem[{Dunn \protect\BIBand{} Harshbarger(1978)}]{1978dunhar}
Dunn JC, Harshbarger S (1978) Conditional gradient algorithms with open loop step size rules. \emph{Journal of Mathematical Analysis and Applications} 62(2):432--444.

\bibitem[{Eftekhari \protect\BIBand{} Thompson(2019)}]{eftekhari2019sparse}
Eftekhari A, Thompson A (2019) Sparse inverse problems over measures: Equivalence of the conditional gradient and exchange methods.

\bibitem[{Fernholz(1983)}]{1983fer}
Fernholz L (1983) Lecture notes in statistics. \emph{Von Mises Calculus for Statistical Functionals} 19.

\bibitem[{Fernholz(2011)}]{2011fer}
Fernholz L (2011) Functional derivatives in statistics: Asymptotics and robustness.

\bibitem[{{GoodSAM Platform}(2020)}]{goodsam2020}
{GoodSAM Platform} (2020) {GoodSAM}. \url{https://www.goodsamapp.org/}, accessed February 29, 2020.

\bibitem[{Kent et~al.(2021)Kent, Blanchet, \protect\BIBand{} Glynn}]{kent2021frankwolfe}
Kent C, Blanchet J, Glynn P (2021) {Frank-Wolfe} methods in probability space.

\bibitem[{Kiefer(1960)}]{1960kie}
Kiefer J (1960) Optimum experimental designs {V}, with applications to systematic and rotatable designs. \emph{Proceedings of the fourth Berkeley symposium on mathematical statistics and probability}, volume~1, 381--405 (Univ of California Press).

\bibitem[{Kiefer(1974)}]{1974kie}
Kiefer J (1974) General equivalence theory for optimum designs (approximate theory). \emph{The Annals of Statistics} 2(5):849--879.

\bibitem[{Kingma \protect\BIBand{} Ba(2017)}]{kingma2017adam}
Kingma DP, Ba J (2017) Adam: A method for stochastic optimization. \urlprefix\url{https://arxiv.org/abs/1412.6980}.

\bibitem[{Mei et~al.(2018)Mei, Montanari, \protect\BIBand{} Nguyen}]{2018mei}
Mei S, Montanari A, Nguyen PM (2018) A mean field view of the landscape of two-layer neural networks. \emph{Proceedings of the National Academy of Sciences} 115(33), \urlprefix\url{http://dx.doi.org/10.1073/pnas.1806579115}.

\bibitem[{Molchanov \protect\BIBand{} Zuyev(2002)}]{2002molzuy}
Molchanov I, Zuyev S (2002) Steepest descent algorithms in a space of measures. \emph{Statistics and Computing} 12:115--123.

\bibitem[{Nichol et~al.(1999)Nichol, Stiell, Laupacis, De~Maio, Wells et~al.}]{nichol1999cumulative}
Nichol G, Stiell IG, Laupacis A, De~Maio VJ, Wells GA, et~al. (1999) A cumulative meta-analysis of the effectiveness of defibrillator-capable emergency medical services for victims of out-of-hospital cardiac arrest. \emph{Annals of emergency medicine} 34(4):517--525.

\bibitem[{Nitanda \protect\BIBand{} Suzuki(2017)}]{nitanda2017stoch}
Nitanda A, Suzuki T (2017) Stochastic particle gradient descent for infinite ensembles. \emph{arXiv preprint arXiv:1712.05438} .

\bibitem[{Okabe(2000)}]{2000oka}
Okabe A (2000) \emph{Spatial tessellations: concepts and applications of Voronoi diagrams}. Wiley series in probability and statistics. Applied probability and statistics section (Chichester: Wiley), 2nd edition, ISBN 0471986356.

\bibitem[{Oving et~al.(2019)Oving, Masterson, Tjelmeland, Jonsson, Semeraro, Ringh, Truhlar, Cimpoesu, Folke, Beesems et~al.}]{oving2019first}
Oving I, Masterson S, Tjelmeland IB, Jonsson M, Semeraro F, Ringh M, Truhlar A, Cimpoesu D, Folke F, Beesems SG, et~al. (2019) First-response treatment after out-of-hospital cardiac arrest: a survey of current practices across 29 countries in {Europe}. \emph{Scandinavian Journal of Trauma, Resuscitation and Emergency Medicine} 27:1--20.

\bibitem[{{PulsePoint}(2020)}]{pulsepoint2020}
{PulsePoint} (2020) Pulsepoint. \url{https://www.pulsepoint.org/}, accessed: 2020-02-29.

\bibitem[{Resnick(1987)}]{res1987}
Resnick S (1987) \emph{Extreme Values, Regular Variation, and Point Processes} (New York, NY: Springer).

\bibitem[{Resnick(1992)}]{1992res}
Resnick S (1992) \emph{Adventures in Stochastic Processes} (Boston, MA.: Birkh\"{a}user).

\bibitem[{Sasson et~al.(2010)Sasson, Rogers, Dahl, \protect\BIBand{} Kellermann}]{sasson2010predictors}
Sasson C, Rogers MA, Dahl J, Kellermann AL (2010) Predictors of survival from out-of-hospital cardiac arrest: a systematic review and meta-analysis. \emph{Circulation: Cardiovascular Quality and Outcomes} 3(1):63--81.

\bibitem[{van~den Berg et~al.(2024)van~den Berg, Henderson, Jagtenberg, \protect\BIBand{} Li}]{van2024modeling}
van~den Berg PL, Henderson SG, Jagtenberg CJ, Li H (2024) Modeling the impact of community first responders. \emph{Management Science} .

\bibitem[{Waalewijn et~al.(2001)Waalewijn, de~Vos, Tijssen, \protect\BIBand{} Koster}]{waalewijn2001survival}
Waalewijn RA, de~Vos R, Tijssen JG, Koster RW (2001) Survival models for out-of-hospital cardiopulmonary resuscitation from the perspectives of the bystander, the first responder, and the paramedic. \emph{Resuscitation} 51(2):113--122.

\bibitem[{Yan et~al.(2020)Yan, Gan, Jiang, Wang, Chen, Luo, Zong, Chen, \protect\BIBand{} Lv}]{yan2020global}
Yan S, Gan Y, Jiang N, Wang R, Chen Y, Luo Z, Zong Q, Chen S, Lv C (2020) The global survival rate among adult out-of-hospital cardiac arrest patients who received cardiopulmonary resuscitation: a systematic review and meta-analysis. \emph{Critical care} 24:1--13.

\bibitem[{Yan et~al.(2024)Yan, Wang, \protect\BIBand{} Rigollet}]{yan2024learning}
Yan Y, Wang K, Rigollet P (2024) Learning {Gaussian} mixtures using the {Wasserstein--Fisher--Rao} gradient flow. \emph{The Annals of Statistics} 52(4):1774--1795.

\bibitem[{Yu et~al.(2024)Yu, Henderson, \protect\BIBand{} Pasupathy}]{yu2024det}
Yu D, Henderson SG, Pasupathy R (2024) Deterministic and stochastic {Frank-Wolfe} recursion on probability spaces. \urlprefix\url{https://arxiv.org/abs/2407.00307}.

\end{thebibliography}

\end{document}